\renewcommand{\mid}{\vert}
\def\vfrac#1#2{(#1)/#2}
\def\vafrac#1#2{(#1)/(#2)}
\def\sklfrac#1#2{(#1/#2)}
\newcommand{\rrvert}{\vert}
\newcommand{\rrVert}{\Vert}
\newcommand{\llvert}{\vert}
\newcommand{\llVert}{\Vert}
\newcommand{\bigtimes}{\mathop{\,\mbox{\parbox[c][9pt][b]{18pt}{\fontsize{18}{18}\selectfont{$\times$}}}\!\!}}
\newcommand{\bigtimesd}{\mathop{\,\mbox{\parbox[c][15pt][b]{27pt}{\fontsize{27}{27}\selectfont{$\times$}}}\!\!}}
\newcommand{\mathds}{\mathbh}
\newtheorem{theorem}{Theorem}[section]
\newtheorem{lemma}[theorem]{Lemma}
\newtheorem{corollary}[theorem]{Corollary}
\newtheorem{prop}[theorem]{Proposition}
\def\eps{{\varepsilon}}
\def\OO{{\mathcal{O}}}
\def\SS{{\mathcal{S}}}
\def\E{{\mathbb{E}}}
\def\N{{\mathbb{N}}}
\def\R{{\mathbb{R}}}
\def\Z{{\mathbb{Z}}}
\begin{document}
\begin{frontmatter}

\title{A stochastic volatility model with flexible extremal dependence structure}
\runtitle{An SV model with flexible extremal dependence structure}

\begin{aug}
\author{\inits{A.}\fnms{Anja}~\snm{Janssen}\corref{}\ead[label=e1,mark]{anja.janssen@math.uni-hamburg.de}\thanksref{e1}}
\and
\author{\inits{H.}\fnms{Holger}~\snm{Drees}\ead[label=e2,mark]{holger.drees@math.uni-hamburg.de}\thanksref{e2}}
\address{Department of Mathematics, University of Hamburg,
Bundesstr. 55, 20146 Hamburg, Germany.\\ \printead{e1,e2}.}
\end{aug}

%
\received{\smonth{10} \syear{2013}}
%
\revised{\smonth{8} \syear{2014}}

%
\begin{abstract}
Stochastic volatility processes with heavy-tailed innovations are a
well-known model
for financial time series. In these models, the extremes of the log
returns are mainly
driven by the extremes of the i.i.d. innovation sequence which leads
to a very strong
form of asymptotic independence, that is, the coefficient of tail
dependence is equal
to $1/2$ for all positive lags. We propose an alternative class of
stochastic volatility
models with heavy-tailed volatilities and examine their extreme value behavior.
In particular, it is shown that, while lagged extreme observations are typically
asymptotically independent, their coefficient of tail dependence can
take on
any value between $1/2$ (corresponding to exact independence) and 1
(related to asymptotic dependence).
Hence, this class allows for a much more flexible extremal dependence between
consecutive observations than classical SV models and can thus describe the
observed clustering of financial returns
more realistically.

The extremal dependence structure of lagged observations is analyzed in
the framework of regular
variation on the cone $(0,\infty)^d$. As two auxiliary results which
are of interest on
their own we derive a new Breiman-type theorem about regular variation
on $(0,\infty)^d$
for products of a random matrix and a regularly varying random vector
and a statement
about the joint extremal behavior of products of i.i.d. regularly
varying random variables.
\end{abstract}

%
\begin{keyword}
\kwd{asymptotic independence}
\kwd{Breiman's lemma}
\kwd{coefficient of tail dependence}
\kwd{extremal dependence}
\kwd{financial time series}
\kwd{hidden regular variation}
\kwd{power products}
\kwd{regular variation on cones}
\kwd{stochastic volatility time series}
\end{keyword}
\end{frontmatter}

\section{Introduction}\label{sec1}
\subsection{Extremal behavior of models for financial time series}
Univariate time series of (log-)returns are usually described by
multiplicative models of the form
%
\begin{equation}
\label{eqmultimodel} X_t=\sigma_t\eps_t,\qquad t
\in\Z,
\end{equation}
where $\eps_t$, $t\in\Z$, are i.i.d. innovations and $(\sigma
_t)_{t\in\Z}$ is a non-negative stationary time series of so-called
volatilities. The two most popular classes of multiplicative models
vary in the way the volatilities are modeled. While $\sigma_t$ is a
function of past innovations $\eps_s$, $s<t$, in GARCH-type models,
stochastic volatility models (SV models, for short) assume in contrast
that the volatilities are driven by a second time series $(\eta_t)_{t
\in\mathbb{Z}}$ of innovations. More precisely, it is often assumed
that the log-volatilities are described by a Gaussian linear time
series of the type
%
\begin{equation}
\label{eqlogvol} \log\sigma_t =\sum_{i=0}^\infty
\alpha_i\eta_{t-i},\qquad t\in\Z,
\end{equation}
with i.i.d. normal innovations $\eta_t$, $t\in\Z$, independent of
$(\eps_t)_{t\in\Z}$ (although, sometimes, $\eps_t$ and $\eta
_{t+1}$ are assumed to be correlated to capture a leverage effect).
Because returns are usually heavy-tailed and the volatilities are
lognormal, in this modeling approach the innovations $\eps_t$ are
often assumed to be regularly varying, that is,
%
\begin{equation}
\label{eqreginno} \frac{P(\eps_t>sx)}{P(\llvert \eps_t\rrvert >x)}\to
ps^{-\alpha}, \qquad\frac
{P(\eps_t<-sx)}{P(\llvert \eps_t\rrvert >x)}
\to(1-p)s^{-\alpha}, \qquad x\to\infty,
\end{equation}
for all $s>0$, some $\alpha>0$ and $p\in[0,1]$. By Breiman's lemma
[see Breiman \cite{Br65}] this implies that $X_t$ is
regularly varying as well.

While the (univariate) tails of $X_t$ behave similarly in these SV and
GARCH-type models, the extreme value dependence of consecutive returns
differs between the two model classes. This can firstly be seen from
the conditional probabilities of an extreme event at lag $h$, given an
extreme event at time 0. For GARCH-type models,
%
\begin{equation}
\label{asymptdep}\lim_{x \to\infty
}P(X_h>x\mid
X_0>x)>0,\qquad h \neq0,
\end{equation}
while for SV models as the one above
%
\begin{equation}
\label{asymptindep}\lim_{x \to\infty
}P(X_h>x\mid
X_0>x)=0,\qquad h \neq0
\end{equation}
[cf. Basrak, Davis and Mikosch \cite{BaDaMi02} and Davis and Mikosch
\cite{DaMi09}]. We call the vector $(X_0, X_h)$
asymptotically dependent in the first case and asymptotically
independent in the second case.

The differences between (\ref{asymptdep}) and (\ref{asymptindep}) are
mirrored by a different {\em asymptotic} cluster behavior of the
related models. In case of asymptotic dependence exceedances over
extreme thresholds typically occur in clusters, while in time series
with asymptotically independent consecutive observations exceedances
tend to be separated in time as the threshold increases. It is
important to note, though, that for realistic sample sizes it will
often be difficult to spot the difference with the naked eye, because
sufficiently high thresholds are not exceeded. Figure~\ref{figure1} displays
realizations of an SV time series [with AR(1) log-volatilities used in
(\ref{eqlogvol})] on the left-hand side and of a GARCH$(1,1)$ time
series on the right-hand side. The parameters were obtained by fitting
these models to a stretch of returns from the S\&P 500 stock index.
Although the left plot shows a realization from a model with
asymptotically independent consecutive observations, it nevertheless
exhibits a
strong clustering of extremes, because the asymptotic
independence becomes obvious only for exceedances over higher thresholds.
Hence, the well-documented fact that large losses (and gains) often
occur in clusters does not rule out SV models for the description of
the extreme value dependence between consecutive returns. Indeed, the
analysis of the extreme value dependence between consecutive returns
from UBS stocks and Google stocks in Drees {\it et~al.} \cite
{Dretal14} indicates that, despite the obvious clusters of large losses
and gains, it is appropriate to assume asymptotic independence between
observations of the same sign.

%
\begin{figure}

\includegraphics{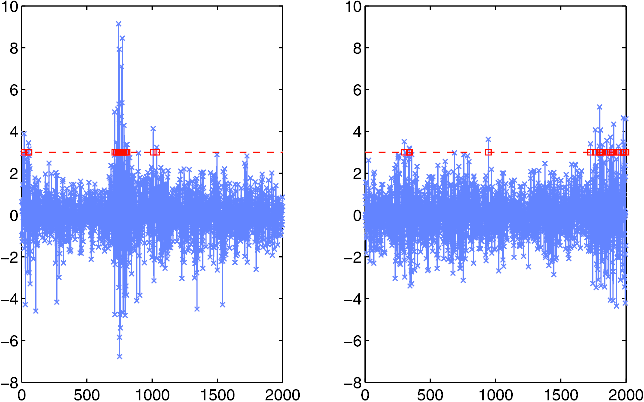}

\caption{Large exceedances for 2000 consecutive observations of a
simulated SV model with heavy-tailed (Student $t$) innovations (left)
and a GARCH($1,1$) process with normal innovations (right). The model
parameters for both models are estimated from the same data set,
consisting of 2432 daily log returns (given in percent) of the S\&P 500
stock index from 1999 to 2008, cf. Abanto-Valle {\it
et~al.} \cite
{AbBaLaEn10} for the SV parameters. The threshold 3 is approximately
equal to the 99\% quantile of the marginal distribution in both models.}
\label{figure1}
\end{figure}


In case of asymptotic independence, the strength of clustering of
exceedances over high, but not extreme thresholds will depend on the
rate at which
\[
P(X_h>x\mid X_0>x)=\frac{P(X_0>x, X_h>x)}{P(X_0>x)},\qquad h \neq0,
\]
tend to 0.
In the present paper, we thus focus on the asymptotics for the
probabilities of joint exceedances $P(X_h>x,X_0>x), h \neq0$, as $x
\to\infty$. Note that established models for financial time series
are quite limited in their ability to reflect different behaviors of
these probabilities. In particular, GARCH models imply that
\[
\lim_{x \to\infty}\frac{P(X_0>x, X_h>x)}{P(X_0>x)} \in(0, \infty),\qquad h
\neq0,
\]
while classical SV models lead to
\[
\lim_{x \to\infty}\frac{P(X_0>x, X_h>x)}{(P(X_0>x))^2} \in(0, \infty),\qquad
h \neq0.
\]
However, applying the methodology developed by Draisma {\it et~al.}
\cite{Draetal04} to consecutive observations in time series of
returns shows that often $P(X_0>x, X_h>x)$ is apparently of the order
$(P(X_0>x))^{1/\eta_h}$ for some $\eta_h$ strictly between $1/2$ and 1,
indicating {\em asymptotic} independence,\vadjust{\goodbreak} but a stronger dependence
between exceedances over high thresholds than implied by classical SV models.
We will therefore introduce an SV model which allows for a much more
flexible modeling of the asymptotics of joint exceedance probabilities.

\subsection{Regular variation on cones}
In order to describe the extremal dependence structure of a time
series, we will use the concept of regular variation of random vectors.
An $\R^d$-valued random vector $\mathbf{Z}$ is said to be regularly
varying on $[0,\infty)^d\setminus\{\mathbf{0\}}$ if there exists a
measure $\tilde\nu\ne0$ on $\mathcal{B}([0,\infty)^d\setminus\{
\mathbf{0\}})$ which is finite on
$[0,\infty)^d\setminus[0,x]^d$ for all $x>0$ such that
%
\begin{equation}
\label{eqmultiregvar} \frac{P(\mathbf{Z}\in xB)}{P(\llVert \mathbf
{Z}\rrVert >x)} \to\tilde\nu(B),\qquad x\to\infty,
\end{equation}
for all Borel sets $B \subset[0,\infty)^d$ with $\tilde\nu(\partial
B)=0$ that are bounded away from the origin $\mathbf{0}$. (Here
$\partial B$ denotes the topological boundary of $B$ and $\llVert \cdot
\rrVert $
is an arbitrary norm on $\R^d$.) By this notion, we follow the
definition of $\mathbb{M}_0$-convergence of Hult and Lindskog
\cite{HuLi06} which can be shown to be equivalent to vague
convergence on $[0, \infty]^d \setminus\{\mathbf{0}\}$. See, for
example, Resnick \cite{Re07}, Section~6.1, for details
on the definition of multivariate regular variation using vague
convergence. The limit measure $\tilde\nu$ is necessarily homogeneous
of some order $-\tilde{\alpha}<0$, which is called the index of
regular variation.

For GARCH time series, Basrak, Davis and Mikosch \cite{BaDaMi02} show
that under general conditions the vectors $(\sigma_{t_1},\ldots,\sigma
_{t_d})$ and $(X_{t_1},\ldots,X_{t_d})$ are multivariate
regularly varying for all $t_1<\cdots<t_d$, and the same holds true
for SV models with heavy-tailed innovations $\eps_t$. However, while
in the former case the limiting measure $\tilde\nu$ puts mass on
$(0,\infty)^d$, it is concentrated on the axes for SV models (cf. Davis
and Mikosch \cite{DaMi01}). This asymptotic independence
of lagged returns (and volatilities) renders convergence (\ref
{eqmultiregvar}) rather uninformative. In particular, we can merely
conclude that the probability $P(X_0>x,X_h>x)$ of joint exceedances is
of smaller order than $P(X_0>x)$ for all $h \neq0$, but we neither
obtain its rate of convergence to 0, nor whether the probability can be
standardized in a different way than in (\ref{eqmultiregvar}) to
obtain a non-trivial limit. Therefore, in the case of asymptotic
independence there is need for a refined analysis of the second-order
extremal dependence behavior.

This can be most elegantly done in the framework of regular variation
on the cone $\mathbb{E}^d:=(0, \infty)^d$.
In what follows, we use the abbreviation $ \min(\mathbf{z}):=\min\{
z_1, \dots, z_d\}$ for $\mathbf{z}=(z_1, \dots, z_d) \in\R^d$.
Now, instead of (\ref{eqmultiregvar}), we assume that there exists a
measure $\nu\ne0$ on $\mathcal{B}(\E^d)$ which is finite on
$(x,\infty)^d$ for all $x>0$ such that
%
\begin{eqnarray}
\label{vagueconvergence} \frac{P(\mathbf{Z} \in
xB)}{P(\min(\mathbf{Z})>x)} \to\nu(B),\qquad x \to\infty,
\end{eqnarray}
for all Borel sets $B \subset\mathbb{E}^d$ with $\nu(\partial B)=0$
that are bounded away from the topological boundary
\[
\OO^d:= \partial\bigl(\mathbb{E}^d\bigr) = \bigl\{
\mathbf{x} \in[0, \infty)^d \mid\min(\mathbf{x})=0\bigr\} %
\]
of the cone $\mathbb{E}^d$. Notice that here we consider events in
which all components of $\mathbf{Z}$ are large, whereas in (\ref
{eqmultiregvar}) just one coordinate needs to be extreme. Again $\nu$
is homogeneous of some order $-\alpha<0$, which is called the index of
regular variation on $\E^d$. If both (\ref{eqmultiregvar}) and (\ref
{vagueconvergence}) hold, then $\alpha\geq\tilde{\alpha}$. In the
case of asymptotic independence (i.e., $\tilde{\nu}$ is concentrated
on $[0,\infty)^d \setminus(0,\infty)^d$), the random vector $\mathbf
{Z}$ is then said to exhibit hidden regular variation.
See Resnick \cite{Re02,Re08} and \cite{Re07}, Section~9.4,
Das, Mitra and Resnick \cite{DaMiRe11} and Lindskog, Resnick and Roy \cite
{LiReRo13} for further details about hidden regular variation and
regular variation on cones.

Kulik and Soulier \cite{KuSo13a} show that for SV models with
heavy-tailed innovations the vector $(X_{t_1},\ldots,X_{t_d})$ of
lagged returns is regularly varying on $(0,\infty)^d$ and that the
limit measure $\nu$ is the same as if the components of this vector
were exactly independent. A similar result holds for the absolute
returns. In particular, $P(X_0>x,X_h>x)$ is of the same order as
$(P(X_0>x))^2$, which means that the coefficient of tail dependence
introduced in Ledford and Tawn \cite{LeTa96} and Ledford and Tawn \cite
{LeTa03} equals $\eta_h=1/2$ for all lags $h>0$. Recall that a
bivariate random vector $(Y_0,Y_1)$ with equal marginal
quantile function $F^\leftarrow$ has a coefficient of tail dependence
$\eta$ if
$x \mapsto P(Y_0>F^\leftarrow(1-1/x),Y_1>F^\leftarrow(1-1/x))$ is a
regularly varying function with index $-1/\eta$. Thus, for independent
$Y_0, Y_1$ the value of $\eta$ is equal to $1/2$ and the higher the
value of $\eta$, the stronger is the extremal dependence of $Y_0$ and $Y_1$.

Hence, the classical SV time series as described above show a very weak
extremal dependence, which is barely influenced by the parameters of
the model. It is the main aim of the present paper to propose and
analyze a modified class of SV models which allows for a much more
flexible and realistic extremal dependence than the classical version.
\subsection{Outline}
Our following analysis of different multiplicative models heavily
relies on the fact that a product of two independent factors inherits
both its tail behavior and extremal dependence from the factor with the
heavier tail. This general heuristic principle was formalized by
Breiman \cite{Br65} for univariate random variables. A
similar ``Breiman-type'' result on the first order dependence behavior
for a product of a random matrix $\mathbf{A}$ and a random vector
$\mathbf{Z}$ was proved by Basrak, Davis and Mikosch \cite
{BaDaMi02}, who analyzed regular variation on $\R^d\setminus\{\mathbf
{0}\}$ and on $[0,\infty)^d\setminus\{\mathbf{0}\}$. In Section~\ref
{sectBreiman}, we establish an analogous result for regular variation
on $\mathbb{E}^d=(0, \infty)^d$, which is somewhat more involved,
because one has to keep in mind that (\ref{vagueconvergence}) only
describes the asymptotic behavior of $\mathbf{Z}/x$ on sets $B$ that
are bounded away from the boundary $\OO^d$ of $\mathbb{E}^d$, and
this feature is not necessarily preserved under
multiplication with a
general matrix $\mathbf{A}$.

Our aim is to apply this Breiman result to SV models with a
heavy-tailed volatility sequence and light-tailed innovations in order
to show that for these models the second-order behavior of extremes is
mainly determined by the volatility sequence. In Section~\ref
{sectSVmodels}, we introduce a particular class of such models with
Gamma-type log-volatilities and derive their marginal tail behavior
using results from Rootz{\'e}n \cite{Ro86}. Moreover, the
first-order extremal behavior is analyzed in terms of point processes
of exceedances. Similar SV models which allow for both asymptotic
dependence and asymptotic independence have been proposed in
Mikosch and Rezapour \cite{MiRe12}, but their analysis is
restricted to the in this case rather uninformative first-order
extremal dependence of those models, while our focus is on the refined
second-order behavior in the asymptotically independent case.

In our SV models, the volatilities are given as (generally infinite)
products of powers of regularly varying i.i.d. random variables.
Section~\ref{sectpowerproducts} deals with the asymptotic behavior of
joint exceedance probabilities of such products, which turns out to be
intimately related to the solution of certain linear optimization
problems. While the heuristics for this connection can easily be
understood, the exact arguments are more delicate. We give a full
result in the case of two products and the proof of a special case
together with an outlook for the general case of more than two
products. In Section~\ref{sectasymptoticsSV}, we discuss the
consequences for our SV models with Gamma-type log-volatility. In
particular, we show that for any finite number of given coefficients of
tail dependence $\eta_h \in[1/2,1]$ for the pairs $(X_0,X_h), 1 \leq
h \leq m$, one can find an SV model of the new type with exactly these
characteristics. This result underpins the high flexibility of our
approach to
modeling the extremal dependence of consecutive returns.
Most proofs are postponed to Section~\ref{proofsection}.

Throughout the paper, we use the following notation. We write $\mathcal
{B}(M)$ for the Borel $\sigma$-algebra on the metric space $M$. Weak
convergence is denoted by $\stackrel{w}{\rightarrow}$. The expression
$\delta_x$ stands for the Dirac measure at $x$. Bold expressions like
$\mathbf{0}, \mathbf{1}$, etc. stand for a vector of suitable
dimension consisting of $0$'s, $1$'s, and so on. This convention
extends to sets, for example, for a vector $\mathbf{x}=(x_1, \ldots, x_d)$
we have $(\mathbf{x}, \bolds{\infty})=\bigtimes_{i=1}^d
(x_i,\infty)$. For a vector $\mathbf{x}$ or a matrix $\mathbf{A}$ we
denote its transposed with $\mathbf{x}'$ and $\mathbf{A}'$,
respectively. We denote the positive and negative part of $x \in
\mathbb{R}$ by $x^+:=\max\{x,0\}$ and $x^-:=-\min\{x,0\}$,
respectively. The complement of a set $A$ is denoted by~$A^c$. The
empty product, $\prod_{i \in\varnothing} X_i$, is by convention equal
to 1. Finally, $f(x)\sim g(x)$ means that $\lim_{x \to\infty}f(x)/g(x)=1$.

\section{A Breiman-type result for regular variation on \texorpdfstring{$(0,\infty)^d$}{(0,infty)d}}
\label{sectBreiman}

As explained in the \hyperref[sec1]{Introduction}, we will analyze the extremal
dependence of the volatilities $(\sigma_{t_1},\ldots,\sigma_{t_d})$
and of the returns $(X_{t_1}, \ldots, X_{t_d})$ of multiplicative time
series as in (\ref{eqmultimodel}) using the notion of regular
variation on the cones $[0,\infty)^d\setminus\{\mathbf{0}\}$ and $\E
^d=(0,\infty)^d$. Although we are mainly interested in the case $d=2$,
for the time being we allow for an arbitrary $d\in\N$.

Since in SV models we have
%
\begin{equation}
\label{eqretrep} \pmatrix{ X_{t_1}
\cr
\vdots
\cr
X_{t_d}}=
\pmatrix{ \sigma_{t_1} & \ldots& 0
\cr
\vdots& \ddots& \vdots
\cr
0 &
\cdots& \sigma_{t_d}} \pmatrix{ \varepsilon_{t_1}
\cr
\vdots
\cr
\varepsilon_{t_d}}= \pmatrix{ \varepsilon_{t_1} & \ldots& 0
\cr
\vdots& \ddots& \vdots
\cr
0 & \cdots& \varepsilon_{t_d}} \pmatrix{
\sigma_{t_1}
\cr
\vdots
\cr
\sigma_{t_d}},
\end{equation}
multivariate Breiman-type results are very useful to establish regular
variation of $(X_{t_1}, \ldots, X_{t_d})$. For regular
variation on $[0,\infty)^d\setminus\{\mathbf{0}\}$, such a result
has been stated in Basrak, Davis and Mikosch \cite{BaDaMi02}, Proposition A.1. If $\mathbf{X}$ is regularly varying on $[0,\infty
)^d\setminus\{\mathbf{0}\}$ with index $-\tilde{\alpha}<0$ and
$\mathbf{A}$ is a random matrix independent of $\mathbf{X}$ such that
$E(\llVert \mathbf{A}\rrVert _{\mbox{\scriptsize{op}}}^{\tilde{\alpha
}+\eps
})<\infty$ for some $\eps>0$, then $\mathbf{AX}$ is regularly
varying on $[0,\infty)^d\setminus\{\mathbf{0}\}$ as well with the
same index of regular variation. Here, $\llVert \cdot\rrVert _{\mbox
{\scriptsize
{op}}}$ denotes the operator norm for matrices, which is defined by
%
\begin{equation}
\label{Eqopnorm} \llVert\mathbf{A}\rrVert_{\mbox{\scriptsize
{op}}}=\sup
_{\mathbf{x} \in\mathbb{R}^d\dvtx  \llVert \mathbf{x}\rrVert =1}\llVert
\mathbf{Ax}\rrVert=\sup_{\mathbf{x} \in\mathbb{R}^d\dvtx  \llVert \mathbf
{x}\rrVert
=1}d
\bigl(\mathbf{Ax}, \{\mathbf{0}\}\bigr),
\end{equation}
where $d(\mathbf{x},B):=\inf_{\mathbf{y} \in B}\llVert \mathbf
{x}-\mathbf
{y}\rrVert $ for $\mathbf{x} \in\mathbb{R}^d, B \subset\mathbb{R}^d$
denotes the usual distance function induced by the Euclidean norm on
$\mathbb{R}^d$. More precisely, if $\mathbf{Z}=\mathbf{X}$ satisfies
(\ref{eqmultiregvar}), then
\[
\frac{P(\mathbf{AX}\in xB)}{P(\llVert \mathbf{X}\rrVert >x)} \to
E\bigl(\tilde\nu\bigl(\mathbf{A}^{-1} (B)\bigr)
\bigr),\qquad x \to\infty, %
\]
for all $B\in\mathcal{B}([0,\infty)^d)$ bounded away from $\mathbf
{0}$ with $E(\tilde\nu(\mathbf{A}^{-1}(\partial B)))=0$.
Therefore, if the vector $(\eps_{t_1}, \ldots, \eps_{t_d})$ of
i.i.d. innovations is regularly varying on $[0,\infty)^d\setminus\{
\mathbf{0}\}$ with index $-\tilde{\alpha}$ and $E(\sigma_0^{\tilde
{\alpha}+\eps})<\infty$, then $(X_{t_1}, \ldots, X_{t_d})$ is
regularly varying on $[0,\infty)^d\setminus\{\mathbf{0}\}$ as well.
Likewise, one may draw this conclusion if the vector $(\sigma
_{t_1},\ldots,\sigma_{t_d})$ is regularly varying on $[0,\infty
)^d\setminus\{\mathbf{0}\}$ with index $-\tilde{\alpha}$ and
$E(\llvert \eps_0\rrvert ^{\tilde\alpha+\eps})<\infty$ for some $\eps>0$.

In the following, we will derive an analogous result for regular
variation on the smaller cone~$\mathbb{E}^d$. For the case of an SV
model with heavy-tailed i.i.d. innovations which are regularly varying
with index $-\alpha$ and light-tailed volatilities which satisfy
$E(\sigma_0^{2d\alpha+\varepsilon})<\infty$ for some $\varepsilon>0$,
Kulik and Soulier \cite{KuSo13a} have shown that the vector
$(X_{t_1}, \ldots, X_{t_d})$, $t_1<\cdots<t_d$, shows regular
variation on $\mathbb{E}^d$ with limiting measure
%
\begin{equation}
\label{EqSVstandard} \nu^X\Biggl(\bigtimesd_{i=1}^d(s_i,
\infty)\Biggr)=\frac{E (\nu^\varepsilon((\bolds{\Delta
}(\sigma_{t_1}, \ldots, \sigma_{t_d}))^{-1}(\bigtimes_{i=1}^d(s_i,
\infty)) ) )}{E (\nu^\varepsilon((\bolds
{\Delta}(\sigma_{t_1}, \ldots, \sigma_{t_d}))^{-1}(\bigtimes
_{i=1}^d(1, \infty)) ) )}=\prod_{i=1}^d
s_i^{-\alpha},
\end{equation}
for all $s_i>0, 1 \leq i \leq d$, where $\nu^\varepsilon$ with $\nu
^\varepsilon(\bigtimes_{i=1}^d(s_i, \infty))=\prod_{i=1}^d s_i^{-\alpha
}$ denotes the limiting measure of $(\varepsilon_{t_1}, \ldots, \varepsilon
_{t_d})$ on $\mathbb{E}^d$ and $\bolds{\Delta}(a_1, \ldots,
a_d)$ stands for a diagonal matrix with diagonal elements $a_1, \ldots, a_d$.

We want to derive a general Breiman-type result for regular variation
on the cone $\E^d$ for random products $\mathbf{A}\mathbf{X}$.
Because convergence (\ref{vagueconvergence}) describes the behavior of
random vectors exclusively on $\E^d$, we have to ensure that the
random pre-image $\mathbf{A}^{-1} (B)$ of a set $B\subset\E^d$ is
again a subset of $\E^d$. Moreover, as the limit measure $\nu$ in
(\ref{vagueconvergence}) may be infinite in a neighborhood of the
boundary $\OO^d$ of $\E^d$,
for our Breiman-type result we must control the distance between
$\mathbf{A}^{-1} (B)$ and $\OO^d$.

To this end, set $\mathbb{F}^d:=\R^d \setminus\mathbb{E}^d$ and
introduce the notation
\[
\tau(\mathbf{x}):=d\bigl(\mathbf{x}, \mathbb{F}^d\bigr)=\min\bigl(
\mathbf{x}^+\bigr)\qquad\mbox{for } \mathbf{x} \in\mathbb{R}^d,
\]
where $(x_1, \dots, x_d)^+=(x_1^+, \dots, x_d^+)$. Furthermore, let
\[
\mathcal{S}^d:= \bigl\{\mathbf{x} \in\mathbb{E}^d \mid
\tau(\mathbf{x})=1\bigr\}. %
\]
Observe that in the definition of regular variation on $\E^d$ the set
$\OO^d$ takes over the role which is played by the origin $\mathbf
{0}\in\mathbb{R}^d$ in the definition of regular variation on
$[0,\infty)^d \setminus\{\mathbf{0}\}$. Since $\tau(\mathbf
{x})=d(\mathbf{x},\OO^d)$
for all $\mathbf{x}\in\E^d$, one may consider $\mathcal{S}^d$ an
analog to the unit sphere $\{\mathbf{x} \in\R^d \mid\llVert \mathbf
{x}\rrVert
=d(\mathbf{x}, \mathbf{0})=1\}$ in the present setting. Cf. Das, Mitra
and Resnick \cite{DaMiRe11} for the use of $\mathcal
{S}^d$ in the context of regular variation on cones.\vadjust{\goodbreak}

Extend the definition of $\tau$ to matrices in analogy to (\ref
{Eqopnorm}) by setting
%
\begin{equation}
\label{Eqtaudef} \tau(\mathbf{A}):=\tau_{\mbox{\scriptsize{op}}}(\mathbf
{A}):=\sup
_{\mathbf{x} \in
\mathcal{S}^d} \tau(\mathbf{Ax})\qquad\mbox{for } \mathbf{A} \in
\mathbb{R}^{d \times d},
\end{equation}
with $\tau_{\mbox{\scriptsize{op}}}(\mathbf{A}) \in[0, \infty]$.
For brevity, we will write $\tau(\mathbf{A})$ instead of $\tau
_{\mbox{\scriptsize{op}}}(\mathbf{A})$ whenever it is clear that
$\mathbf{A}$ denotes a matrix. It will turn out that one may prove a
Breiman-type result for the regular variation on $\E^d$ if one
replaces the moment condition $E(\llVert \mathbf{A}\rrVert _{\mbox
{\scriptsize
{op}}}^{\tilde{\alpha}+\eps})<\infty$ used in Basrak, Davis and Mikosch
\cite{BaDaMi02} with the corresponding condition on
$\tau(\mathbf{A})$.
The next lemma provides some help for the interpretation of $\tau$.

%
\begin{lemma} \label{lemmatauprop}
For a matrix $\mathbf{A}\in\R^{d\times d}$ the following three
properties are equivalent:
\begin{longlist}[(iii)]
\item[(i)] $\mathbf{A}^{-1}(\E^d)\subset\E^d$;

\item[(ii)]$0<\tau(\mathbf{A})<\infty$;

\item[(iii)] $\mathbf{A}$ is invertible and all entries of $\mathbf{A}^{-1}$
are non-negative.
\end{longlist}
Furthermore, if $\mathbf{A}$ possesses any of these properties, then
$\mathbf{A}^{-1}([0,\infty)^d)\subset[0,\infty)^d$.
\end{lemma}

For a $d$-dimensional diagonal matrix $\bolds{\Delta}(\delta_1,
\ldots, \delta_d)$ with positive diagonal elements the value of $\tau$ may be directly derived from
(\ref{Eqtaudef}) as
\[
\tau(\bolds{\Delta})=\max\{\delta_1, \dots,
\delta_d\}.
\]
For a general random matrix $\mathbf{A,}$ the concrete value of $\tau
(\mathbf{A}) \in(0, \infty)$ can be derived from the inverse matrix
$\mathbf{A}^{-1}$.

%
\begin{lemma}\label{twolemmas}
Let $\mathbf{A} \in\mathbb{R}^{d \times d}$ satisfy $0<\tau(\mathbf
{A})<\infty$. Then
%
\begin{equation}
\label{Eqtauinverse} \tau(\mathbf{A})= \frac
{1}{\min(\mathbf{A}^{-1}\mathbf{1})}.
\end{equation}
\end{lemma}

We are now ready to state our Breiman-type result for vectors that show
regular variation on the cone $\E^d$.

\begin{theorem}\label{hiddenBreiman}
Let $\mathbf{Z} \in\R^d$ be regularly varying on the cone $\E^d$
with index $-\alpha$ for some $\alpha>0$ and limit measure $\nu$.
Moreover, let $\mathbf{A} \in\mathbb{R}^{d \times d}$ be a random
matrix independent of $\mathbf{Z}$ which satisfies $\tau(\mathbf
{A})>0$ almost surely and\vspace*{-2pt}
%
\begin{equation}
\label{finitemoment} E\bigl(\tau(\mathbf{A})^{\alpha
+\delta}\bigr)<\infty
\end{equation}
for some $\delta>0$. Then
%
\begin{equation}
\label{theorem} \lim_{x \to\infty}\frac{P(\mathbf
{AZ} \in xB)}{P(\min(\mathbf{Z})>x)}=E\bigl(\nu\bigl(
\mathbf{A}^{-1}B\bigr)\bigr)<\infty
\end{equation}
for all Borel sets $B \subset\mathbb{E}^d$ bounded away from $\OO^d$
with $E(\nu(\partial(\mathbf{A}^{-1}B)))=0$. In particular, $\mathbf
{AZ}$ is regularly varying on $\E^d$ with index $-\alpha$ and
limiting measure
\[
B \mapsto\frac{E(\nu(\mathbf{A}^{-1}B))}{E(\nu(\mathbf
{A}^{-1}((1,\infty)^d)))}. %
\]
\end{theorem}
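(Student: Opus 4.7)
I will condition on $\mathbf{A}$ and use independence to write
\[
\frac{P(\mathbf{A}\mathbf{Z}\in xB)}{P(\min(\mathbf{Z})>x)} \;=\; E\!\left[\frac{P(\mathbf{Z}\in x\mathbf{A}^{-1}B\mid \mathbf{A})}{P(\min(\mathbf{Z})>x)}\right],
\]
and apply dominated convergence: for $P_{\mathbf{A}}$-almost every realization $\mathbf{a}$ the inner ratio should tend to $\nu(\mathbf{a}^{-1}B)$ by \eqref{vagueconvergence}, and be bounded by an integrable function of $\mathbf{a}$ that does not depend on $x$. Note that $0<\tau(\mathbf{A})<\infty$ almost surely (by hypothesis and \eqref{finitemoment}), so Lemma \ref{lemma:tauprop} makes $\mathbf{A}$ invertible a.s.

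The key geometric step is to quantify how close $\mathbf{A}^{-1}B$ can come to $\OO^d$ in terms of $\tau(\mathbf{A})$. Since $B$ is bounded away from $\OO^d$, one has $B\subset\{\mathbf{z}\in\E^d:\min(\mathbf{z})\ge c\}$ for some $c>0$. For any $\mathbf{y}\in\E^d$ with $\min(\mathbf{y})=t>0$ the vector $\mathbf{y}/t$ lies in $\mathcal{S}^d$, and the definition of $\tau$ gives $d(\mathbf{a}\mathbf{y},\mathbb{F}^d)\le t\,\tau(\mathbf{a})$. Hence $\mathbf{a}\mathbf{y}\in B$ forces $c\le t\,\tau(\mathbf{a})$, yielding the crucial inclusion
\[
\mathbf{a}^{-1}B \;\subset\; \{\mathbf{y}\in\E^d:\min(\mathbf{y})\ge c/\tau(\mathbf{a})\}.
\]
In particular, $\mathbf{a}^{-1}B$ is a Borel subset of $\E^d$ bounded away from $\OO^d$, and $\partial(\mathbf{a}^{-1}B)=\mathbf{a}^{-1}(\partial B)$ since $\mathbf{a}^{-1}$ is a homeomorphism. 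The hypothesis $E[\nu(\partial(\mathbf{A}^{-1}B))]=0$ therefore yields $\nu(\partial(\mathbf{a}^{-1}B))=0$ for $P_{\mathbf{A}}$-a.e.\ $\mathbf{a}$, and \eqref{vagueconvergence} gives the pointwise convergence of the inner ratio to $\nu(\mathbf{a}^{-1}B)$.

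The same inclusion also produces the dominating bound
\[
\frac{P(\mathbf{Z}\in x\mathbf{a}^{-1}B)}{P(\min(\mathbf{Z})>x)} \;\le\; \frac{P(\min(\mathbf{Z})>xc/\tau(\mathbf{a}))}{P(\min(\mathbf{Z})>x)}.
\]
Since $x\mapsto P(\min(\mathbf{Z})>x)$ is regularly varying with index $-\alpha$ (apply \eqref{vagueconvergence} to $(1,\infty)^d$ and use homogeneity of $\nu$), Potter's inequality yields, for any $\eta\in(0,\min(\alpha,\delta))$, a bound of order $C(1+\tau(\mathbf{a})^{\alpha+\eta})$ uniformly in $x$ on the event $\{\tau(\mathbf{A})\le cx/x_0\}$, and this dominating function is integrable by \eqref{finitemoment}. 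Dominated convergence then yields \eqref{theorem}; taking $B=(1,\infty)^d$ confirms that $E(\nu(\mathbf{A}^{-1}(1,\infty)^d))$ is positive and finite, giving the normalization and establishing regular variation of $\mathbf{A}\mathbf{Z}$ on $\E^d$ with index $-\alpha$.

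I expect the main obstacle to be the uniform domination: Potter's inequality requires both of its arguments to be large, so it degenerates on the tail event $\{\tau(\mathbf{A})>cx/x_0\}$ where $xc/\tau(\mathbf{A})$ is not large. The standard workaround from classical Breiman is to truncate and to argue separately, via Markov's inequality combined with \eqref{finitemoment} and the Potter lower bound on $P(\min(\mathbf{Z})>x)$, that this tail event contributes $o(1)$ as $x\to\infty$. The novel ingredient compared to the setting of \citet[Proposition A.1]{BaDaMi02} is that one works with $\tau$ instead of $\|\cdot\|_{\mbox{\scriptsize{op}}}$, and the geometric inclusion above is precisely what makes $\tau$ play the correct role.
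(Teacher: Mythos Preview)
Your proposal is correct and follows essentially the same strategy as the paper: both rely on the geometric inclusion $\mathbf{a}^{-1}B\subset\{\mathbf{y}\in\E^d:\min(\mathbf{y})\ge c/\tau(\mathbf{a})\}$ (which the paper records as \eqref{imageconservationeq} in the proof of Lemma~\ref{lemma:tauprop}), condition on $\mathbf{A}$, and handle large values of $\tau(\mathbf{A})$ separately. The only notable difference is in the truncation scheme. The paper truncates at a \emph{fixed} level $M$: on $\{\tau(\mathbf{A})\le M\}$ the integrand is bounded by the constant $M^\alpha\delta_B^{-\alpha}$ so Pratt's lemma applies directly, while on $\{\tau(\mathbf{A})>M\}$ the univariate Breiman lemma gives a bound $\delta_B^{-\alpha}E(\tau(\mathbf{A})^\alpha\mathds{1}_{\{\tau(\mathbf{A})>M\}})$ that vanishes as $M\to\infty$. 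You instead truncate at the $x$-dependent level $cx/x_0$, use Potter's inequality to obtain the $x$-free integrable majorant $C(1+\tau(\mathbf{a})^{\alpha+\eta})$ on the main part, and bound the tail via Markov's inequality on $\tau(\mathbf{A})^{\alpha+\delta}$ together with the Potter lower bound on $P(\min(\mathbf{Z})>x)$. Both routes work; the paper's fixed-$M$ truncation is slightly cleaner in that it avoids the explicit Potter computation and the $x$-dependent splitting, at the cost of an extra limit $M\to\infty$ at the end.
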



The above theorem is useful to derive the extremal behavior of
stochastic volatility models.
%
\begin{example} \label{cormultSV}
Let $(X_{1,t},X_{2,t})_{t \in\mathbb{Z}}$ be a two-dimensional
heavy-tailed stochastic volatility time series with negative dependence
which satisfies
\begin{eqnarray*}
(X_{1,t},X_{2,t})'&=&\mathbf{L}_t
\mathbf{G}_t (\varepsilon_{1,t}, \varepsilon_{2,t})',\qquad t \in\mathbb{Z},
\\
\mathbf{L}_t &=& \pmatrix{ 1 & 0
\cr
-\exp(q_t/2) & 1},\qquad
\mathbf{G}_t= \pmatrix{ \exp(h_{1,t}/2) & 0
\cr
0 &
\exp(h_{2,t}/2)},\qquad t \in\mathbb{Z},
\\
q_{t+1}&=&\alpha+ \beta q_t + u_t,\qquad t \in
\mathbb{Z} %
\\
h_{i,t+1}&=&\mu_i + \phi_i h_{i,t} +
\eta_{i,t},\qquad i=1, 2, t \in\mathbb{Z}, %
\end{eqnarray*}
with $\beta, \phi_i \in(-1,1)$ and i.i.d. $\varepsilon_{i,t}$ which
have a Student-$t$-distribution with $\alpha$ degrees of freedom,
$\eta_{i,t}\sim\mathcal{N}(0,\sigma_{\eta,i}^2), u_t\sim\mathcal
{N}(0,\sigma_u^2)$, all mutually\vspace*{2pt} independent. Cf. Tsay
\cite{Ts02}, Section~9.6, and Asai, McAleer and Yu \cite
{AsMcYu06}, Section~2.5.2, for similar models. Define independent
random variables
\[
H_i \sim\mathcal{N}\bigl(\mu_i, \sigma_{\eta,i}^2/
\bigl(1-\phi_i^2\bigr)\bigr),\qquad i=1,2,\qquad Q \sim\mathcal{N}
\bigl(\alpha, \sigma_{u}^2/\bigl(1-\beta^2
\bigr)\bigr).
\]
The stationary distribution of $\mathbf{L}_t \mathbf{G}_t$ is equal
to the distribution of $\mathbf{A}$ with
\begin{eqnarray*}
\mathbf{A} &=&\pmatrix{ \exp(H_1/2) & 0
\vspace*{3pt}\cr
-\exp
\bigl((Q+H_1)/2\bigr) & \exp(H_2/2)},
\\
\mathbf{A}^{-1} &=& \pmatrix{ \exp(-H_1/2) & 0
\vspace*{3pt}\cr
\exp
\bigl((Q-H_2)/2\bigr) & \exp(-H_2/2)}. %
\end{eqnarray*}
Since all entries of $\mathbf{A}^{-1}$ are a.s. non-negative, we may
apply Theorem~\ref{hiddenBreiman} to see that the stationary
distribution of $(X_{1,t},X_{2,t})$ is regularly varying on $\mathbb
{E}^2$ with index $-2\alpha$ and limit measure
\begin{eqnarray*}
\nu_2^{X} \Biggl(\bigtimesd_{i=1}^2(s_i,
\infty) \Biggr)&=& \frac{E ( (\exp(-H_1/2)s_1 )^{-\alpha} (\exp
((Q-H_2)/2)s_1
+\exp(-H_2/2)s_2 )^{-\alpha} )}{E ( (\exp
(-H_1/2) )^{-\alpha}
(\exp((Q-H_2)/2)+\exp(-H_2/2) )^{-\alpha} )}
\\
&=& s_1^{-\alpha}\frac{E ( (\exp((Q-H_2)/2)s_1
+\exp(-H_2/2)s_2 )^{-\alpha} )}{E ( (\exp((Q-H_2)/2)
+\exp(-H_2/2) )^{-\alpha} )}
\end{eqnarray*}
for $s_1,s_2>0$.
\end{example}
In the following, we are mainly interested in the behavior of lagged
observations of univariate SV models. That all SV models with
heavy-tailed i.i.d. innovations and light-tailed volatilities have the
same limit measure (\ref{EqSVstandard}) has been shown in Kulik and
Soulier \cite{KuSo13a} and also follows from Theorem~\ref
{hiddenBreiman} (even under the weaker assumption that $E(\sigma
_0^{d\alpha+\varepsilon})<\infty$ instead of $E(\sigma_0^{2d\alpha
+\varepsilon})<\infty$\vadjust{\goodbreak} for some $\varepsilon>0$). The following lemma
shows that a much richer second-order structure evolves from SV models
with heavy-tailed volatilities and light-tailed innovations.

%
\begin{corollary} \label{corSVcor}
Let $X_t=\sigma_t\eps_t$, $t\in\Z$, where $(\sigma_t)_{t\in\Z}$
and $(\eps_t)_{t\in\Z}$ are independent stationary time series with
$\eps_t$, $t\in\Z$, i.i.d. and $\sigma_t>0$. Assume furthermore
that $(\sigma_{t_1},\ldots, \sigma_{t_d})$ is regularly varying on
$\E^d$ with index $-\alpha_d<0$ and limit measure $\nu_d^\sigma$.
Let, in addition, $0<E((\varepsilon_0^+)^{\alpha_d+\delta})<\infty$
for some $\delta>0$. Then $(X_{t_1},\ldots,X_{t_d})$ is regularly
varying on $\E^d$ as well, with the same index $-\alpha_d$ and limit
measure $\nu_d^{X}$ defined by
%
\begin{equation}
\label{hiddenloggamma} \nu_d^{X} \Biggl(\bigtimesd_{i=1}^d(s_i,
\infty) \Biggr) = \frac{E
(\nu_d^\sigma(\bigtimes_{i=1}^d (s_i (\varepsilon
_{t_i}^+ )^{-1}, \infty) ) )}{
E (\nu_d^\sigma(\bigtimes_{i=1}^d ( (\varepsilon
_{t_i}^+ )^{-1}, \infty) ) )}
\end{equation}
for all $s_1,\ldots,s_d>0$.
\end{corollary}

\begin{pf}
Let $\eps_{t_i}^*$, $1\le i\le d$, be i.i.d. random variables
independent of $(\sigma_{t_1},\ldots, \sigma_{t_d})$ with
distribution $P^{\eps_0\mid\eps_0>0}$ and define $X_{t_i}^\ast:=\varepsilon
_{t_i}^\ast\sigma_{t_i}, i=1, \ldots, d$. Apply Theorem
\ref{hiddenBreiman} to the second equation in~(\ref{eqretrep}) to
conclude the regular variation of $(X_{t_1}^*,\ldots,X_{t_d}^*)$ on
$\E^d$ with limit measure $\nu_d^{X}$, which is equivalent to the assertion.
\end{pf}
An analogous result holds true if $X_t$ is replaced by $\llvert
X_t\rrvert $.

In view of Corollary~\ref{corSVcor}, we are able to construct SV
models with a flexible second-order extremal behavior of the lagged
log returns as soon as we find a way to model the volatilities
accordingly. A suitable model will be introduced in the next section.

\section{SV models with Gamma-type log-volatilities}\label{sectSVmodels}

In order to allow for a richer second-order extremal dependence
structure of SV models we modify the assumption of a normal
distribution of the log-volatilities in a way which allows for heavy
tails of the volatility process. In our construction, we rely on
results from Rootz{\'e}n \cite{Ro86} which guarantee the existence
of stationary time series that meet our assumptions.

%
\begin{definition} \label{defSVnew}
Let
%
\begin{equation}
\label{Xdef} X_t=\sigma_t \varepsilon_t,\qquad t
\in\mathbb{Z},
\end{equation}
with $\varepsilon_t, t \in\mathbb{Z}$, i.i.d. such that $P(\varepsilon
_0>0)>0$ and $E(\llvert \varepsilon_0\rrvert ^{1+\delta})<\infty$ holds
for some
$\delta>0$. Furthermore, let
%
\begin{equation}
\label{sigmadef} \log\sigma_t=\sum_{i=0}^\infty
\alpha_i \eta_{t-i},\qquad t \in\mathbb{Z},
\end{equation}
with:
\begin{longlist}[(a)]
\item[(a)] coefficients $\alpha_i \in[0,1], i \in\mathbb{N}_0$,
such that
$\max_{i \in\mathbb{N}_0}\alpha_i=1$ and $\alpha_i = \mathrm{O}
(i^{-\theta})$ as $i \to\infty$ for some $\theta>1$,
\item[(b)] i.i.d. innovations $\eta_t, t \in\mathbb{Z}$, which are
independent of $(\varepsilon_t)_{t \in\mathbb{Z}}$ with $E(\eta
_0^2)<\infty$ and
%
\begin{equation}
\label{etaasymptotic} P(\eta_0>z) \sim K z^\beta{\rm
{e}}^{-z},\qquad z \to\infty,
\end{equation}
for a real constant $\beta\neq-1$ and a positive constant $K$.
\end{longlist}
We call $(X_t, \sigma_t)_{t \in\mathbb{Z}}$ a \emph{stochastic
volatility} (\emph{SV}) \emph{model with Gamma-type log-volatility}.
\end{definition}

In accordance with common SV terminology, we will call the process
$(\sigma_t)_{t \in\mathbb{Z}}$ the \textit{volatility process},
$(\log\sigma_t)_{t \in\mathbb{Z}}$ the \textit{log-volatility
process} and $(\varepsilon_t)_{t \in\mathbb{Z}}$ the \textit
{innovation process}. The following theorem is based on Rootz{\'e}n
\cite{Ro86} and guarantees that a heavy-tailed stationary
solution to our definition exists.

%
\begin{theorem} \label{thstatsol}
There exists a stationary solution $(X_t, \sigma_t)_{t \in\mathbb
{Z}}$ to (\ref{Xdef}) and (\ref{sigmadef}) as in Definition~\ref
{defSVnew} and the marginal distributions of $\llvert X_0\rrvert $
and $\sigma_0$
are regularly varying with index $-1$. Furthermore, the distribution of
$X_0$ is tail balanced with
%
\begin{equation}
\label{tail-balanced}\lim_{x \to\infty}\frac
{P(X_0>x)}{P(\llvert X_0\rrvert >x)}=\frac{E(\eps_0^+)}{E(\llvert
\eps_0\rrvert )},
\qquad\lim_{x \to\infty}\frac{P(X_0<-x)}{P(\llvert X_0\rrvert
>x)}=\frac{E(\eps
_0^-)}{E(\llvert \eps_0\rrvert )}
\end{equation}
and
%
\begin{equation}
\label{eqXtails} \lim_{x \to\infty}\frac{P(\llvert X_0\rrvert
>x)}{P(\sigma_0>x)}=E\bigl(\llvert
\eps_0\rrvert\bigr), \qquad\lim_{x \to\infty}
\frac{P(X_0>x)}{P(\sigma_0>x)}=E\bigl(\eps_0^+\bigr).
\end{equation}
For normalizing constants
$a_n:=\hat{K} n (\log n)^{\hat{\beta}}, n \in\mathbb{N}$ [see
(\ref{betahat}) and (\ref{Khat}) for the definition of $\hat{\beta
}$ and $\hat{K}$] and $z>0$ we have
%
\begin{eqnarray}
\label{independentmaxima}  P(\sigma_0\leq a_n z)^n
&\to&\exp(-1/z), \qquad P\bigl(\llvert X_0\rrvert\leq a_n
z\bigr)^n \to\exp\bigl(-E\bigl(\llvert\varepsilon_0
\rrvert\bigr)/z\bigr),
\nonumber\\[-8pt]\\[-8pt]\nonumber
P(X_0\leq a_n z)^n &\to&\exp
\bigl(-E\bigl(\varepsilon_0^+\bigr)/z\bigr),\qquad n \to\infty.
\end{eqnarray}
\end{theorem}
%

\begin{remark}\label{remstandardization}
(i)~The assumption $\max_{i\in\N_0}\alpha_i=1$ ensures that the
index of regular variation equals $-1$. However,
our model can easily be extended to an arbitrary (negative) index of
regular variation. To this end, replace (\ref{sigmadef}) with
\[
\log\sigma_t=c \sum_{i=0}^\infty
\alpha_i \eta_{t-i}, \qquad t\in\Z, %
\]
for some $c>0$. Together with the above assumptions this will lead to a
solution of (\ref{sigmadef}) which is regularly varying with index
$-1/c$. If we assume that $E(\llvert \varepsilon_0\rrvert ^{1/c+\delta
})<\infty$ for
some $\delta>0$, then again by Breiman's lemma this implies a
stationary solution to (\ref{Xdef}) which is regularly varying with
the same index $-1/c$. For the sake of notational simplicity, we will
stick to the original definition in the following analysis.\vadjust{\goodbreak}

(ii)~Stochastic volatility models often have an additional parameter
specifying the mean of the log-volatilities (cf., e.g., Taylor \cite
{Ta86}). In this case, equation (\ref{sigmadef})
would read as $\log\sigma_t=\mu+\sum_{i=0}^\infty\alpha_i \eta
_{t-i}$ for some $\mu\in\mathbb{R}$. Such an assumption is usually
combined with the standardization of some moment of $\varepsilon_0$, for
example, by setting $\operatorname{Var}(\varepsilon_0)=1$. Otherwise, setting
$\hat{\varepsilon}_i:={\rm{e}}^\mu\varepsilon_i, i \in\mathbb{Z}$, has
the same effect as adding $\mu$ in the definition of $\log\sigma_t$.
Since we make no assumptions about the particular form of (existing)
moments of $\varepsilon_0$, we set $\mu=0$ without loss of generality.

(iii)~The case where $\beta=-1$ in (\ref{etaasymptotic}) is a
boundary case which is not treated in detail in Rootz{\'e}n~\cite{Ro86}, therefore, it is excluded in Definition~\ref{defSVnew}.
\end{remark}

%
\begin{example}\label{exAR1}
An interesting special case is given by $\alpha_i:=\alpha^i, i \in
\mathbb{N}_0$, for some $\alpha\in(0,1)$. This case corresponds to
an AR(1) model for the log-volatilities, that is,
%
\begin{equation}
\label{AR1} \log\sigma_t =\alpha\log\sigma_{t-1}+
\eta_t, \qquad t \in\mathbb{Z}.
\end{equation}
A similar model, with a modified assumption about the distribution of
$\eta_t, t \in\mathbb{Z}$ (namely, that the distribution of $\exp
(\eta_t)$ is regularly varying and that a stationary solution to (\ref
{AR1}) exists) has been analyzed with respect to its first-order
extremal behavior in Mikosch and Rezapour \cite{MiRe12}.

Moreover, the conditional extreme value behavior of consecutive
observations given that $X_0$ is large has been analyzed by Kulik and
Soulier \cite{KuSo13} in the case that the innovations $\eta_t$
of the log-volatility series are double exponentially distributed. See
Section~\ref{sectasymptoticsSV} for a more detailed comparison with
their results.
\end{example}

Next, we are interested in the extremal behavior of the \textit
{processes} $(X_t)_{t \in\mathbb{Z}}$ and $(\sigma_t)_{t \in\mathbb
{Z}}$, particularly in their extremal dependence structure. Some
information on their first-order extremal dependence behavior may
readily be derived from the point process results in Rootz{\'e}n
\cite{Ro86} for the process of the log-volatilities. In the
following, let $M_p(\mathbb{E})$ denote the set of Radon point
measures on a topological space $\mathbb{E}$. For an introduction to
point processes in the context of extreme values, see Resnick \cite
{Re07}, Chapters~5 and 7.

%
\begin{theorem}\label{pointprocesses}
Let $(X_t, \sigma_t)_{t \in\mathbb{Z}}$ be an SV model with
Gamma-type log-volatility. In the case that $\beta<-1$ assume
additionally that $k:=\llvert \{n \in\mathbb{N}_0\dvtx  \alpha_n=1\}
\rrvert =1$. With
$a_n, n \in\mathbb{N}$, as in Theorem~\ref{thstatsol}, let
$N_n^{\sigma}, n \in\mathbb{N}$, and $N_n^{X}, n \in\mathbb{N}$,
denote the point processes defined by
\[
N_n^{\sigma}(\cdot):=\sum_{i=1}^n
\delta_{(i/n, \sigma
_i/a_n)}(\cdot),\qquad N_n^X(\cdot):=\sum
_{i=1}^n \delta_{(i/n,
X_i/a_n)}(\cdot).
\]
\begin{longlist}[(iii)]
\item[(i)] Then, as $n \to\infty$, $N_n^\sigma\stackrel{w}{\rightarrow}
N^\sigma$ in $M_p([0, 1] \times(0, \infty])$, where $N^\sigma$ is a
Poisson process with intensity measure $\mathrm{d}t \times z^{-2}\,\mathrm{d}z$.

\item[(ii)] Let $(t_{(i)},z_{(i)})_{i \in\mathbb{N}}$ denote the points of
the Poisson process $N^\sigma$ and $(\varepsilon_{(i)})_{i \in\mathbb
{N}}$ an i.i.d. sequence with $P^{\varepsilon_{(1)}}=P^{\varepsilon_0}$,
independent of $(t_{(i)},z_{(i)})_{i \in\mathbb{N}}$.\vadjust{\goodbreak}

Then $N_n^X (\cdot\cap D) \stackrel{w}{\rightarrow} N^X(\cdot\cap
D)$ in $M_p(D)$, as $n \to\infty$, where $D:=[0, 1]
\times[-\infty, \infty]\setminus\{0\}$ and $N^X$ is a point process\vspace*{1.5pt}
consisting of the points $(t_{(i)},z_{(i)}\varepsilon_{(i)})_{i \in
\mathbb{N}}$. The restriction of $N^X$ to $D$ is a Poisson point
process with intensity measure $\mathrm{d}t \times z^{-2} [\mathds{1}_{\{
z<0\}}E(\varepsilon_0^-)+\mathds{1}_{\{z>0\}}E(\varepsilon_0^+) ]\,\mathrm{d}z$.
\end{longlist}
\end{theorem}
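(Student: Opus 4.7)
The plan is to derive (i) from Rootz\'en's point process theorem for moving averages with Gumbel-domain innovations, and then to obtain (ii) from (i) by an independent-marking argument.

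\emph{Part (i).} The log-volatility process $\log\sigma_t=\sum_{i=0}^\infty\alpha_i\eta_{t-i}$ is a one-sided linear time series whose innovations satisfy \eqref{etaasymptotic} and therefore lie in the Gumbel domain of attraction. The coefficient decay $\alpha_i=O(i^{-\theta})$ with $\theta>1$, the bound $\alpha_i\in[0,1]$, and the moment $E(\eta_0^2)<\infty$ match exactly the hypotheses of Theorem~7.3 in \cite{Ro86}, which yields, for $b_n:=\log a_n$,
$$
\sum_{i=1}^n\delta_{(i/n,\log\sigma_i-b_n)}\overset{w}{\to}\Pi
$$
in $M_p([0,1]\times(-\infty,\infty])$, where $\Pi$ is a Poisson random measure with intensity $dt\otimes\e^{-z}\,dz$. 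The no-tie condition needed for a pure Poisson (rather than compound-Poisson) limit is automatic when $\beta>-1$ and is supplied by the assumption $k=1$ when $\beta<-1$. Pushing $\Pi$ forward under the homeomorphism $(t,z)\mapsto(t,\e^z)$---equivalently, rewriting $\sigma_i/a_n=\exp(\log\sigma_i-b_n)$---transforms the intensity into $dt\otimes y^{-2}\,dy$ on $[0,1]\times(0,\infty]$, which is (i).

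\emph{Part (ii).} Since $(\eps_t)$ is i.i.d.\ and independent of $(\sigma_t)$, enrich the point process by the innovation marks,
$$
\tilde N_n:=\sum_{i=1}^n\delta_{(i/n,\sigma_i/a_n,\eps_i)},
$$
on $[0,1]\times(0,\infty]\times\R$. A Laplace-functional computation conditional on $(\sigma_t)$ reduces the convergence of $\tilde N_n$ to that of $N_n^\sigma$ against a test function already averaged over the $\eps$-mark; combined with (i) this yields
$$
\tilde N_n\overset{w}{\to}\tilde N:=\sum_{i\ge 1}\delta_{(t_{(i)},z_{(i)},\eps_{(i)})},
$$
which by the marking theorem is a Poisson random measure with intensity $dt\otimes z^{-2}\,dz\otimes P^{\eps_{(1)}}(de)$. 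Apply the continuous map $\Phi(t,z,e):=(t,ze)$ and identify the intensity of $\Phi_\ast\tilde N$ on $\{y>0\}$ via the substitution $u=y/z$:
$$
\int_0^\infty z^{-2}P(z\eps_0>y)\,dz=y^{-1}E(\eps_0^+),
$$
and analogously $|y|^{-1}E(\eps_0^-)$ on $\{y<0\}$; differentiating in $y$ produces precisely the intensity in the theorem.

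\emph{Main obstacle.} The map $\Phi$ is \emph{not} continuous on all of $M_p$: atoms with diverging $z$-component and $e$-component collapsing to $0$ could pile up near the line $\{y=0\}$, and large $\eps$-values paired with very small $\sigma$-values could in principle land in $D$ although they are invisible to $N_n^\sigma$. The standard remedy is a two-level truncation. Fix the threshold $\delta>0$ defining the relevant part of $D$ and restrict $\tilde N_n$ to atoms with $\sigma_i/a_n>\delta_0$; there are only finitely many such atoms in the limit, and $\Phi$ is continuous on this finite configuration, so the continuous mapping theorem applies. For the discarded small-$\sigma$ atoms, the moment hypothesis $E(|\eps_0|^{1+\delta})<\infty$ combined with Markov's inequality and the regular variation of $\sigma_0$ shows that their total contribution to $\{|y|>\delta\}$ is $o_P(1)$ uniformly as $\delta_0\downarrow 0$. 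This is precisely why the statement is formulated on $D$ rather than on $[0,1]\times[-\infty,\infty]$.
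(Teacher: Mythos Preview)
Your proposal is correct and follows essentially the same route as the paper: Rootz\'en's point-process theorem for the log-volatilities followed by the exponential transform for (i), and an independent-marking enrichment $\tilde N_n=\sum_i\delta_{(i/n,\sigma_i/a_n,\eps_i)}$ pushed through the product map with a truncation argument for (ii). The only minor discrepancies are that the paper invokes Theorem~7.4 (not 7.3) of \cite{Ro86} for the point-process convergence of the log-volatilities, and that the paper delegates both the marking step and the truncation to references (Propositions~5.2, 5.5 and~7.5 of \cite{Re07}) rather than spelling them out as you do; the substance of the argument is the same.
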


From part (i) of Theorem~\ref{pointprocesses}, we may conclude that
\[
P\bigl(\max\{\sigma_1, \dots, \sigma_n\}\leq
a_n z\bigr)=P \bigl(N_n^\sigma\bigl([0,1]
\times(z,\infty)\bigr)=0 \bigr)\to\exp(-1/z)
\]
for all $z>0$. A comparison with (\ref{independentmaxima}) thus yields
that the extremal index of the stationary sequence $(\sigma_t)_{t \in
\mathbb{Z}}$ exists and equals 1. The same holds true for the
processes $(\llvert X_t\rrvert )_{t \in\mathbb{Z}}$ and $(X_t)_{t \in
\mathbb{Z}}$
by part (ii) of the theorem. Hence, the extremes in these processes do
not cluster asymptotically, in the sense that the projections of the
limiting point processes obtained in
Theorem~\ref{pointprocesses} on the time coordinate are simple (i.e.,
they do not have multiple points), cf. Leadbetter \cite
{Le83}. Moreover, the form of the limiting point process implies
asymptotic independence, that is, %
\[
P(\sigma_h>x \mid\sigma_0>x)\to0\quad\mbox{and}\quad
P(X_h>x \mid X_0>x)\to0
\]
as $x \to\infty$ for all $h \in\mathbb{Z} \setminus\{0\}$.

Hence, in this respect, the SV model with Gamma-type log-volatility
shows the same first-order extremal dependence behavior as classical SV
time series.
In Section~\ref{sectasymptoticsSV}, though, we will see that the second-order extremal
behavior of these classes of processes is quite different. In what
follows, we focus on the asymptotics for the probabilities $P(\sigma
_0>s_0x, \sigma_h>s_hx)$ and $P(X_0>s_0x, X_h>s_hx)$ as $x \to\infty
$ for different values of $h \in\mathbb{N}$ and $s_0,s_h>0$. For
simplicity, we will restrict ourselves to the analysis of the upper
tails of the process $(X_t)_{t \in\mathbb{Z}}$. However, the
necessary changes to analyze both upper and lower tails become obvious
by writing $P(X_0>s_0x, X_h<-s_hx)=P(\sigma_0\varepsilon_0^+>s_0x,
\sigma_h\varepsilon_h^->s_hx)$ for $s_0,s_h,x>0$.

\section{Joint extremal behavior of power products}\label{sectpowerproducts}

In this section, we analyze the joint extremal behavior of products of
the form $Y_i=\prod_{j=1}^\infty X_j^{\alpha_{ij}}$, $1 \leq i \leq d$,
for i.i.d. non-negative random variables $X_j, j \in\mathbb{N}$,
which are regularly varying with index $-1$. The connection to SV
models with Gamma-type log-volatilities becomes clear by writing
\[
\sigma_t=\exp\Biggl(\sum_{i=0}^\infty
\alpha_i \eta_{t-i} \Biggr)=\prod
_{i=0}^\infty\bigl(\exp(\eta_{t-i})
\bigr)^{\alpha_i},\qquad t \in\mathbb{Z}.
\]
Most of this section deals with the case $d=2$ and we will write $\prod
_{i=1}^\infty X_i^{\alpha_i}$ and $\prod_{i=1}^\infty X_i^{\beta_i}$
for notational convenience. The general case will briefly be discussed
at the end of this section.
We will show that the joint tail behavior of these two products is
closely related to the following infinite-dimensional linear
optimization problem:
\[
\sum_{i=1}^\infty\kappa_i \to
\min!
\]
under the constraints
\[
\sum_{i=1}^\infty\alpha_i
\kappa_i\geq1,\qquad \sum_{i=1}^\infty
\beta_i\kappa_i\geq1,\qquad \kappa_i \geq0, \forall
i \in\mathbb{N}. %
\]

This relation can be explained by the following heuristic argument for $\alpha_i$, $\beta_i \geq 0$.
Suppose that $(\kappa_i)_{ i \in\mathbb{N}}$ is a sequence that
fulfills the constraints. Then the event $\{X_i> x^{\kappa_i}, i \in
\mathbb{N}\}$ implies both
\[
\prod_{i=1}^\infty X_i^{\alpha_i}>
\prod_{i=1}^\infty x^{\alpha
_i\kappa_i}\geq x
\]
and
\[
\prod_{i=1}^\infty X_i^{\beta_i}>
\prod_{i=1}^\infty x^{\beta
_i\kappa_i}\geq x
\]
for $x\geq1$. Now, $x\mapsto P(X_i>x^{\kappa_i}, i\in\N)$ is
regularly varying with index $-{\sum_{i=1}^\infty\kappa_i}$. Hence,
if $\sum_{i=1}^\infty\kappa_i$ is minimized, the above event is,
heuristically, the ``most likely'' combination of extremal events which
lead to $\{\prod_{i=1}^\infty X_i^{\alpha_i}>x,\prod_{i=1}^\infty
X_i^{\beta_i}>x\}$.

We will make frequent use of the so-called Potter bounds (Bingham {\it
et~al.} \cite{BiGoTe87}, Theorem~1.5.6) for functions
$f\dvtx [0,\infty)\to(0, \infty)$ which are regularly varying with index
$-\alpha$: for all $\varepsilon>0$ there exists a constant $M=M(\varepsilon
)$ such that if $\min\{x, sx\}>M$, then
%
\begin{equation}
\label{eqpotterbounds} (1-\varepsilon)f(x)s^{-\alpha\mp\varepsilon}\leq f(s
x)\leq(1+\varepsilon
)f(x)s^{-\alpha\pm\varepsilon},
\end{equation}
where, for abbreviation,
%
\begin{equation}
\label{eqpluminus} s^{\pm\varepsilon}:=\max\bigl\{ s^\varepsilon, s^{-\varepsilon}
\bigr\},\qquad s^{\mp\varepsilon}:=\min\bigl\{ s^\varepsilon, s^{-\varepsilon}\bigr\}.
\end{equation}

Before we deal with infinite products, we first analyze products of two
factors in the case that a unique solution to the above optimization
problem exists with $\kappa_1,\kappa_2>0$.

\begin{prop}\label{power-products}
Let $X_1, X_2 \geq0$ be two independent random variables which are
both regularly varying with index $-1$. For constants $\alpha_1,\alpha
_2,\beta_1,\beta_2\geq0$, we assume that the linear optimization problem
%
\begin{eqnarray}
\label{linearequations} &&\kappa_1+\kappa_2 \to\min!
\nonumber\\[-8pt]\\[-8pt]\nonumber
&&\quad \mbox{under the constraints } \kappa_1\alpha_1+
\kappa_2\alpha_2\geq1, \kappa_1\beta
_1+\kappa_2\beta_2\geq1,
\kappa_1, \kappa_2 \geq0
\end{eqnarray}
has a unique solution satisfying $\kappa_1,\kappa_2>0$.
Let
\[
C:=\frac{\llvert \alpha_1\beta_2-\alpha_2\beta_1\rrvert }{(\alpha
_1-\alpha
_2)(\beta_2-\beta_1)}.
\]
Then,
to each $\varepsilon>0$, there exists a constant $M=M(\varepsilon)>0$ such that
\begin{eqnarray*}
(1-\varepsilon)C z_1^{1\mp\varepsilon}z_2^{1\mp\varepsilon} &
\leq&\frac{P((z_1X_1)^{\alpha_1} (z_2X_2)^{\alpha
_2}>x,(z_1X_1)^{\beta_1}(z_2X_2)^{\beta_2}>x)}{P(X_1>x^{\kappa
_1})P(X_2>x^{\kappa_2})}
\\
& \leq& (1+\varepsilon)C z_1^{1\pm\varepsilon}z_2^{1\pm\varepsilon}
\end{eqnarray*}
for all $x,z_1,z_2>0$ satisfying $\min\{x^{\kappa
_1},z_1^{-1}x^{\kappa_1}$, $x^{\kappa_2}, z_2^{-1}x^{\kappa_2}\}>M$.
\end{prop}

The above result can be interpreted as a bivariate extension of the
Potter bounds for random products and is essential for the proof of the
following theorem.
\begin{theorem}\label{power-products-infinite}
Let $X_i$, $i\in\N$, be i.i.d. non-negative random variables which
are regularly varying with index $-1$. Assume that $\alpha_i,\beta
_i\ge0$, $i\in\N$, are such that neither all $\alpha_i$ nor all
$\beta_i$ are equal to~0 and $\sum_{i=1}^\infty\alpha_i<\infty,
\sum_{i=1}^\infty\beta_i<\infty$.
Then
\[
Y_0:=\lim_{n \to\infty} \prod
_{i=1}^n X_i^{\alpha_i},\qquad
Y_1:=\lim_{n \to\infty} \prod
_{i=1}^n X_i^{\beta_i}
\]
exist almost surely. We assume that $P(Y_i>0)>0, i=0,1$.

Then the optimization problem
%
\begin{eqnarray}
\label{LOb} &&\sum_{i=1}^\infty
\kappa_i \to\min!
\nonumber\\[-8pt]\\[-8pt]\nonumber
&&\quad\mbox{under the constraints } \sum_{i=1}^\infty
\alpha_i \kappa_i \geq1, \sum _{i=1}^\infty\beta_i \kappa_i
\geq1, \kappa_i\ge0, \forall i\in\N,
\end{eqnarray}
has a solution $(\kappa_i)_{i\in\N}$.
\begin{longlist}[(iii)]
\item[(i)] For all $\eps>0$,
%
\begin{eqnarray}
\label{Equpperobound} P(Y_0>x,Y_1>x) & = & \mathrm{o}
\bigl(x^{\eps-\sum
_{i=1}^\infty\kappa_i} \bigr),
\\
\label{Eqlowerobound} x^{-\eps-\sum_{i=1}^\infty\kappa_i} & = & \mathrm{o}\bigl
(P(Y_0>x,Y_1>x)
\bigr)
\end{eqnarray}
as $x\to\infty$.
\end{longlist}
If the solution to (\ref{LOb}) is unique, then at most two of the
$\kappa_i$ are strictly positive.
\begin{enumerate}
\item[(ii)] If the solution is unique with $\kappa_i,\kappa_j>0$ for
some $i \neq j$, then for all $s_0, s_1>0$
%
\begin{equation}
\label{mproductsnondegenerateb} \lim_{x \to\infty} \frac{P(Y_0>s_0x,
Y_1>s_1x)}{P(X_i>x^{\kappa_i})P(X_j>x^{\kappa_j})} = D
\bigl(s_0^{\beta_i-\beta_j}s_1^{\alpha_j-\alpha_i}
\bigr)^{1/(\alpha_i\beta_j-\alpha_j\beta_i)}
\end{equation}
with
\[
D:=\frac{\llvert \alpha_i\beta_j-\alpha_j\beta_i\rrvert }{(\alpha
_i-\alpha
_j)(\beta_j-\beta_i)}E \biggl( \prod_{m\in\N\setminus\{i,j\}}
X_m^{(\alpha_m(\beta_j-\beta_i)+\beta_m(\alpha_i-\alpha
_j))/(\alpha_i\beta_j-\alpha_j\beta_i)} \biggr)<\infty.\vadjust{\goodbreak}
\]
\item[(iii)] If the solution is unique with $\kappa_i>0$ for exactly
one $i \in\N$ and $\kappa_j=0$ else, then for all $s_0,s_1>0$
%
\begin{eqnarray}
\label{mproductsdegenerate}
\nonumber
&&\lim_{x \to\infty}
\frac{P(Y_0>s_0x, Y_1>s_1
x)}{P(X_i>x^{\kappa_i})}
\\
&&\quad  =  \cases{ \displaystyle E \biggl( \min\biggl( s_0^{-1}
\prod_{j\in\N\setminus\{
i\}} X_j^{\alpha_j},
s_1^{-1}\prod_{j\in\N\setminus\{i\}}
X_j^{\beta_j} \biggr) ^{1/\beta_i} \biggr), &\quad $
\alpha_i=\beta_i$,
\vspace*{3pt}\cr
\displaystyle E \biggl( \prod
_{j\in\N\setminus\{i\}} X_j^{\beta
_j/\beta_i}
\mathds{1}_{\{X_j^{\alpha_j}>0\}} \biggr) s_1^{-1/\beta
_i}, & \quad$ \alpha_i>\beta_i$,
\vspace*{3pt}\cr
\displaystyle E \biggl( \prod
_{j\in\N\setminus\{i\}} X_j^{\alpha
_j/\alpha_i}
\mathds{1}_{\{X_j^{\beta_j}>0\}} \biggr) s_0^{-1/\alpha
_i}, &\quad$
\alpha_i<\beta_i$,}
\\
&&\quad <\infty.\nonumber
\end{eqnarray}
\end{enumerate}
\end{theorem}

%
\begin{remark} \label{remconvconds}
(i)~Under our assumptions about the summability of the $\alpha_i$'s
and $\beta_i$'s, the infinite-dimensional linear program (\ref{LOb})
can always be boiled down to a finite-dimensional linear program, which
can easily be solved numerically. Indeed, an optimal solution $(\kappa
_i)_{i\in\N}$ must satisfy $\kappa_i=0$ if $\max(\alpha_i,\beta
_i)< 1/(2/\max_{j\in\N}\alpha_j+2/\max_{j\in\N} \beta_j)$; see
part~2 of the proof of Theorem~\ref{power-products-infinite} for details.

(ii) If $E(\log X_1)^-<\infty$, then $Y_0$ and $Y_1$ are almost
surely strictly positive, because then $E(\sum_{i=1}^k (\alpha
_i+\beta_i)\log X_i)$ converges in $\R$ as $k\to\infty$. In
particular, it suffices to assume that $P(X_1\le x)=\mathrm{o}(\llvert \log
x\rrvert ^{-(1+\eps)})$ as $x\downarrow0$ for some $\eps>0$.

(iii) The assumption that all $X_j$ are identically distributed can be
dropped if one of the following two conditions is fulfilled:
\begin{itemize}
\item there exists $n \in\mathbb{N}$ such that $\alpha_i=\beta_i=0$
for all $i \geq n$,
\item$\sup_{i\in\N} E(X_i^\eps)<\infty$ for some $\eps>0$
(which ensures that the infinite products converge and a moment bound
of similar type as (\ref{Eqhelpfulbound}) holds).
\end{itemize}

(iv) If the solution to the linear program is not unique, it is
sometimes possible that a slight redefinition of the factors $X_i$
leads to linear program with a unique solution. Think,\vspace*{1pt} for example, of
$ \alpha_i=\beta_i=1, i=1,2$, $\sup_{i \geq3}\{\alpha_i, \beta_i\}
<1$. If we define $\hat{X}_1=X_1X_2,\hat{X}_i=X_{i+1}, i \geq2$,
then $\hat X_1$ is regularly varying with index $-1$ by the corollary
to Theorem~3 in Embrechts and Goldie \cite{EmGo80}. The resulting
linear program has a unique solution $\hat{\kappa}_1=1, \hat{\kappa
}_i=0, i\geq2$, and $P(Y_0>s_0x, Y_1>s_1x) \sim E(\min\{s_0^{-1}\prod
_{i\geq3}X_i^{\alpha_i},s_1^{-1}\prod_{i \geq3}X_i^{\beta_i}\}
)P(X_1X_2>x)$. However,\vspace*{1pt} the probability on the right-hand side cannot
be easily expressed in terms of tail probabilities of $X_1$ and $X_2$.
See Denisov and Zwart \cite{DeZw07} and Embrechts and Goldie
\cite{EmGo80} for discussions of the distribution of the product
of two factors with the same index of regular variation.
\end{remark}

%
%
\begin{remark}\label{remHRV}
(i)~In the situation of Theorem~\ref{power-products-infinite}(ii)
and (iii) $\min\{Y_0,Y_1\}$ is regularly varying with index $-\sum
_{i=1}^\infty\kappa_i$.

(ii)~Under the assumptions of Theorem~\ref{power-products-infinite}(ii) the random vector $(Y_0,Y_1)$ is regularly varying on the cone $\E
^2=(0,\infty)^2$ with limiting measure $\nu$ given by
\[
\nu\bigl((s_0,\infty)\times(s_1,\infty) \bigr)=
\bigl(s_0^{\beta
_i-\beta_j} s_1^{\alpha_j-\alpha_i}
\bigr)^{1/(\alpha_i\beta
_j-\alpha_j\beta_i)}, s_0,\qquad s_1>0. %
\]

(iii)~Under the assumptions of Theorem~\ref{power-products-infinite}(iii) with $\alpha_i=\beta_i$ the random vector $(Y_0,Y_1)$ is
regularly varying on the cone $\E^2$ with limiting measure $\nu$
given by
\[
\nu\bigl((s_0,\infty)\times(s_1,\infty) \bigr)=
\frac{E ( \min
( s_0^{-1}\prod_{j \in\mathbb{N} \setminus\{i\}} X_j^{\alpha
_j}, s_1^{-1}\prod_{j\in\mathbb{N} \setminus\{i\}} X_j^{\beta
_j} ) ^{1/\beta_i} )}{E ( \min( \prod_{j\in
\mathbb{N} \setminus\{i\}} X_j^{\alpha_j}, \prod_{j\in\mathbb{N}
\setminus\{i\}} X_j^{\beta_j} ) ^{1/\beta_i} )}, %
\]
$s_0, s_1>0$.

(iv) If the assumptions of Theorem~\ref{power-products-infinite}(iii) are fulfilled with $\alpha_i\neq\beta_i$, then there exists a
measure $\nu$ on $\mathcal{B}((0,\infty]^2)$ such that convergence
(\ref{vagueconvergence}) holds for all $\nu$-continuous Borel sets
$B\subset(0,\infty]^2$ bounded away from $\mathcal{O}^2$, but $\nu$
is concentrated on $(\{\infty\}\times(0,\infty))\cup((0,\infty
)\times\{\infty\})$. Thus, the restriction of $\nu$ to $\E^2$
equals the zero measure and $(Y_0,Y_1)$ is not regularly varying on~$\E
^2$. Note that this case cannot occur if $Y_0$ and $Y_1$ have the same
distribution, as it will be the case in the applications considered in
the next section.
\end{remark}

So far, we have analyzed the joint extremal behavior of only two
products of powers of the random variables $X_j$. In the remainder of
this section, we briefly discuss the joint extreme value behavior of an
arbitrary number of such products. However, as a generalization of the
Potter-type result established in Proposition~\ref{power-products} to
arbitrary dimension is rather cumbersome, we focus on a special case in
which one can use the one-dimensional Potter bounds instead.

In what follows, we consider products
\[
Y_i:= \prod_{j=1}^n
X_j^{\alpha_{ij}}, \qquad1\le i\le d, %
\]
where we assume w.l.o.g. that $d\le n$. (By the arguments given at the
beginning of the proof of Theorem~\ref{power-products-infinite}, one
can easily deal with infinite products, too.) For the main result in
this case, we do not assume any longer that all exponents $\alpha
_{ij}$ are non-negative, but that the sub-matrix consisting of the most
relevant exponents has a certain structure.

%
\begin{theorem}\label{power-products-high-dimension}
Assume that $X_j$, $1\le j\le n$, are independent, non-negative random
variables that are regularly varying with index $-1$ and bounded away
from 0.
If the optimization problem
%
\begin{eqnarray}
\label{LObd} &&\sum_{j=1}^n
\kappa_j \to\min!
\nonumber\\[-8pt]\\[-8pt]\nonumber
&&\quad \mbox{under the constraints } \sum_{j=1}^n
\alpha_{ij} \kappa_j \geq1, \forall1\le i\le d,
\kappa_j\ge0, \forall1\le j\le n,
\end{eqnarray}
has a solution $(\kappa_j)_{1\le j\le n}$, then
for all $\eps>0$,
\begin{eqnarray*}
P(Y_i>x~\forall1\le i\le d) & = & \mathrm{o} \bigl(x^{\eps-\sum_{j=1}^n
\kappa_j} \bigr),
\\
x^{-\eps-\sum_{j=1}^n \kappa_j} & = & \mathrm{o}\bigl(P(Y_i>x~\forall1\le i\le
d)\bigr)
\end{eqnarray*}
as $x\to\infty$.

Now suppose, in addition, that the solution is unique and that $J:=\{
j\mid\kappa_j>0\}$ has exactly $d$ elements. Then the matrix $\mathbf
{A}:=(\alpha_{ij})_{1\le i\le d, j\in J}$ is invertible. If all
entries $A^{-1}_{ji}$ of its inverse $\mathbf{A}^{-1}$ are positive, then
for all $s_i>0, 1\le i\le d$,
%
\begin{equation}
\label{asymphighdimension} \lim_{x \to\infty} \frac{P(Y_i>s_i x~\forall 1\le i\le d)}{\prod_{j\in
J}P(X_j>x^{\kappa_j})} = D \prod
_{i=1}^d s_i^{-\sum_{j\in J}A^{-1}_{ji}}
\end{equation}
with
\[
D:=\frac{1}{\llvert \det\mathbf{A}\rrvert }\frac{\prod_{k \notin J}E (
X_k^{\sum_{j \in J}\sum_{i=1}^dA_{ji}^{-1}\alpha_{ik}} )}{\prod
_{i=1}^d\sum_{j \in J}A_{ji}^{-1}}. %
\]
\end{theorem}

The assumption $A^{-1}_{ji}>0$ for all $1\le i\le d$, $j\in J$, ensures
that all factors $X_j$, $j\in J$, must be large if all products $\prod
_{j\in J} X_j^{\alpha_{ij}}$, $1\le i\le d$, are large. If\vspace*{1pt} this
condition is not satisfied, then a much more delicate analysis of the
probability that at least one of the $X_j$ is of smaller order than
$x^{\kappa_j}$ while still all products $\prod_{j\in J} X_j^{\alpha
_{ij}}$, $1\le i\le d$, exceed $x$ is needed. We conjecture that (\ref
{asymphighdimension}) holds true under much more general conditions
on $\mathbf{A}$ (in particular, that it holds for arbitrary
non-negative matrices), but such a general result is left to a future
publication.

\section{Second-order behavior of SV models with Gamma-type log-volatility}\label{sectasymptoticsSV}

The results from the two previous sections enable us to analyze the
joint extremal behavior of two lagged observations from an SV model
with Gamma-type log-volatility.

\begin{theorem}\label{limitmeasuressvmodels}
Let $(X_t, \sigma_t)_{t \in\mathbb{Z}}$ be an SV model with
Gamma-type log-volatility as in Definition~\ref{defSVnew}. Assume
that for $h \in\mathbb{N}$ there exists a unique solution to the
optimization problem
%
\begin{eqnarray}
\label{svmin} &&\sum_{i=0}^\infty
\kappa_i \to\min!
\nonumber\\[-8pt]\\[-8pt]\nonumber
&&\quad\mbox{under the constraints } \sum_{i=h}^\infty
\alpha_{i-h} \kappa_i \geq1, \sum
_{i=0}^\infty\alpha_i \kappa_i
\geq1, \kappa_i\ge0, \forall i\in\N_0.
\end{eqnarray}
%
%
In addition, suppose that $E((\eps_0^+)^{\sum_{i=0}^\infty\kappa
_i+\delta})<\infty$ for some $\delta>0$.
\begin{longlist}[(ii)]
\item[(i)] If $\kappa_i, \kappa_j >0$ for some $i \neq j$, then for all
$s_0, s_h>0$
\begin{eqnarray*}
\lim_{x \to\infty}\frac{P(\sigma_0>s_0x, \sigma_h>s_hx)}{P(\min\{
\sigma_0,\sigma_h\}>x)}&=&\lim_{x \to\infty}
\frac{P(X_0>s_0x,
X_h>s_hx)}{P(\min\{X_0,X_h\}>x)}
\\
&=&\bigl(s_0^{\alpha_i-\alpha_j}s_h^{\alpha_{j-h}-\alpha
_{i-h}}
\bigr)^{1/(\alpha_j\alpha_{i-h}-\alpha_i\alpha_{j-h})},
\end{eqnarray*}

where $\alpha_{k}:=0$ for $k<0$.
\item[(ii)] If $\kappa_i>0$ for exactly one $i \in\mathbb{N}_0$ and
$\kappa_j=0$ else, then $i\ge h$, $\alpha_i=\alpha_{i-h}$ and for
all \mbox{$s_0, s_h>0$}
\begin{eqnarray*}
&&\lim_{x \to\infty}\frac{P(\sigma_0>s_0x, \sigma
_h>s_hx)}{P(\min\{\sigma_0,\sigma_h\}>x)}
\\
&&\quad =\frac{E (\min(s_0^{-1}\prod_{j\ge h, j\ne i}
{\rm{e}}^{\alpha_{j-h}\eta_{h-j}},s_h^{-1}\prod_{j\in
\mathbb{N}_0 \setminus\{i\}} {\rm{e}}^{\alpha_j\eta_{h-j}}
)^{1/\alpha_{i}} )}{E (\min(\prod_{j\ge h,
j\ne i} {\rm{e}}^{\alpha_{j-h}\eta_{h-j}},\prod_{j\in
\mathbb{N}_0 \setminus\{i\}} {\rm{e}}^{\alpha_j\eta_{h-j}}
)^{1/\alpha_{i}} )},
\end{eqnarray*}
and
\begin{eqnarray*}
&&\lim_{x \to\infty}\frac{P(X_0>s_0x, X_h>s_hx)}{P(\min\{
X_0,X_h\}>x)}
\\
&&\quad =\frac{E (\min(s_0^{-1}\varepsilon_0^+\prod_{j\ge
h, j\ne i} {\rm{e}}^{\alpha_{j-h}\eta_{h-j}},s_h^{-1}\varepsilon
_h^+\prod_{j\in\mathbb{N}_0 \setminus\{i\}} {\rm
{e}}^{\alpha_j\eta_{h-j}} )^{1/\alpha_{i}} )}{E
(\min(\varepsilon_0^+\prod _{j\ge h, j\ne i} {\rm
{e}}^{\alpha_{j-h}\eta_{h-j}},\varepsilon_h^+\prod _{j\in
\mathbb{N}_0 \setminus\{i\}} {\rm{e}}^{\alpha_j\eta_{h-j}}
)^{1/\alpha_{i}} )}.
\end{eqnarray*}
\end{longlist}
In both cases, $(\sigma_0,\sigma_h)$ and $(X_0,X_h)$ are regularly
varying on $\E^2=(0,\infty)^2$ with index $-\sum_{i=0}^\infty\kappa_i$.
\end{theorem}

%
%
\begin{remark}
(i) According to Definition~\ref{defSVnew}(a) one has $\alpha
_k=1$ for some $k\in\N_0$. Because $\kappa_k=\kappa_{k+h}=1$ and
$\kappa_i=0$ for all other $i\in\N_0$ defines a feasible solution of
(\ref{svmin}), the optimal solution satisfies $\sum_{i=0}^\infty
\kappa_i\le2$. Hence, the moment condition on $\eps_0^+$ is always
fulfilled if $E((\eps_0^+)^{2+\delta})<\infty$ for some $\delta>0$.

(ii) Again, a result similar to Theorem~\ref{limitmeasuressvmodels}
holds for $(\llvert X_0\rrvert,\llvert X_h\rrvert )$ instead of
$(X_0,X_h)$ with $\varepsilon_i^+$
replaced by $\llvert \varepsilon_i\rrvert $.
\end{remark}

In general, the optimization problem (\ref{svmin}) must be solved
numerically. To this end, simple rules may help to find an optimal
solution $(\kappa_i)_{i\in\N_0}$ more easily. For example, if
$\kappa_i>0$, then necessarily $\alpha_i+\alpha_{i-h}\ge1$, because
otherwise $\tilde\kappa_k:=\kappa_k+\alpha_i\kappa_i, \tilde
\kappa_{k+h}:=\kappa_{k+h}+\alpha_{i-h}\kappa_i, \tilde\kappa
_i:=0$ and $\tilde\kappa_j:=\kappa_j$ for all $j\in\N_0\setminus\{
k,k+h,i\}$ with $\alpha_k=1$ defines a feasible solution with a
smaller total sum. This way, (\ref{svmin}) is reduced to a finite-dimensional program. A special case are strictly decreasing
coefficients, where the solution can be easily determined as the
following corollary shows.

\begin{corollary}\label{limitmeasuressvmodelsmonotone}
Let $(X_t,\sigma_t)_{t \in\mathbb{Z}}$ be an SV model with
Gamma-type log-volatility as in Definition~\ref{defSVnew} with
$\alpha_i, i \in\mathbb{N}_0$, strictly decreasing (which implies
$\alpha_0=1$).
Then the unique solution to~(\ref{svmin}) is given by $\kappa
_0=1-\alpha_h, \kappa_h=1$ and $\kappa_i=0$ else. Furthermore, if
$E((\eps_0^+)^{2-\alpha_h+\delta})<\infty$ for some $\delta>0$, then
\[
\lim_{x \to\infty}\frac{P(\sigma_0>s_0x, \sigma_h>s_hx)}{P(\min\{
\sigma_0,\sigma_h\}>x)} = \lim_{x \to\infty}
\frac{P(X_0>s_0x,
X_h>s_hx)}{P(\min\{X_0,X_h\}>x)}= s_0^{\alpha_h-1}s_h^{-1}
\]
for all $ s_0, s_h>0$.
\end{corollary}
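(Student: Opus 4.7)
The plan is to identify the unique optimal solution of the linear program \eqref{svmin}, \eqref{svconstraint} explicitly and then to invoke Theorem \ref{limitmeasuressvmodels}(i) with $(i,j)=(0,h)$. Strict monotonicity together with $\max_{k \in \mathbb{N}_0}\alpha_k = 1$ forces $\alpha_0 = 1$, so the candidate $(\kappa_i)_{i \in \mathbb{N}_0}$ defined by $\kappa_0=1-\alpha_h$, $\kappa_h=1$ and $\kappa_i=0$ otherwise is feasible: the first constraint in \eqref{svconstraint} reduces to $\alpha_0 \kappa_h = 1$ and the second to $(1-\alpha_h)+\alpha_h = 1$, both with equality, and the primal objective equals $2-\alpha_h$.

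To establish optimality and uniqueness I would use weak LP duality with dual variables $(y_1,y_2)=(1-\alpha_h,1)$. For any feasible $(\kappa_i)_{i \in \mathbb{N}_0}$,
$$
\sum_{i \in \mathbb{N}_0}\kappa_i \;\ge\; \sum_{i \in \mathbb{N}_0}\big(y_2\alpha_i + y_1\alpha_{i-h}\mathds{1}_{\{i \ge h\}}\big)\kappa_i \;\ge\; y_1+y_2 \;=\; 2-\alpha_h,
$$
provided the dual-feasibility inequality $y_2\alpha_i + y_1\alpha_{i-h}\mathds{1}_{\{i \ge h\}} \le 1$ holds for every $i \in \mathbb{N}_0$. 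I would verify this by cases: for $1 \le i < h$ it reduces to $\alpha_i < \alpha_0 = 1$; for $i = h$ both sides equal $1$; and for $i > h$ strict monotonicity gives $\alpha_i < \alpha_h$ and $\alpha_{i-h} < 1$, so
$$
\alpha_i + \alpha_{i-h}(1-\alpha_h) \;<\; \alpha_h + (1-\alpha_h) \;=\; 1.
$$
Since the dual-feasibility inequality is \emph{strict} outside $\{0,h\}$, complementary slackness forces any optimal $(\kappa_i)$ to vanish off $\{0,h\}$, and the two primal constraints then determine $(\kappa_0,\kappa_h)$ uniquely.

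Because $h\ge 1$ and $(\alpha_i)$ is strictly decreasing, $\alpha_h<1$, so both $\kappa_0$ and $\kappa_h$ are strictly positive and we are in case (i) of Theorem \ref{limitmeasuressvmodels}. The hypothesis $E((\epsilon_0^+)^{2-\alpha_h+\delta})<\infty$ matches the moment condition there since $\sum_i\kappa_i=2-\alpha_h$. Substituting $\alpha_i=\alpha_0=1$, $\alpha_j=\alpha_h$, $\alpha_{i-h}=\alpha_{-h}=0$ and $\alpha_{j-h}=\alpha_0=1$ into the exponent formulas of that theorem yields
$$
s_0^{(1-\alpha_h)/(\alpha_h\cdot 0 - 1\cdot 1)}\, s_h^{(1-0)/(\alpha_h\cdot 0 - 1\cdot 1)} \;=\; s_0^{\alpha_h-1}\, s_h^{-1},
$$
which is the claimed limit. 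I expect the only nontrivial step to be the verification of the dual-feasibility inequality for $i>h$, which crucially uses strict monotonicity of $(\alpha_i)$ at the two places $\alpha_i < \alpha_h$ and $\alpha_{i-h}<1$ simultaneously.
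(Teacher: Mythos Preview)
Your proof is correct and follows essentially the same route as the paper: identify the optimal solution of the linear program and then apply Theorem~\ref{limitmeasuressvmodels}(i) with $(i,j)=(0,h)$. The paper's own proof simply declares the optimal solution ``immediate from the assumed strict monotonicity'' and then plugs into Theorem~\ref{limitmeasuressvmodels}(i), whereas you make the optimality and uniqueness rigorous via an explicit LP duality argument with dual variables $(y_1,y_2)=(1-\alpha_h,1)$ and complementary slackness; this is a welcome elaboration rather than a different method. One small omission: in your case analysis of dual feasibility you skip $i=0$, but there the inequality reads $\alpha_0=1\le 1$ with equality, which is consistent with your claim that equality holds exactly on $\{0,h\}$.
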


Theorem~\ref{limitmeasuressvmodels} shows that under the stated
assumptions the coefficient of tail dependence is the same for the
vectors $(\sigma_0, \sigma_h)$ and $(X_0, X_h)$ and equal to $\eta
_h=1/\sum_{i=0}^\infty\kappa_i\in[1/2,1]$. In the situation of
Corollary~\ref{limitmeasuressvmodelsmonotone}, one has $\eta
_h=1/(2-\alpha_h)$. In particular, for the AR(1) model considered in
Example~\ref{exAR1} with $\alpha_h=\alpha^h, h \in\mathbb
{N}_0$, for some $\alpha\in(0,1)$, the coefficient of tail dependence
of the lagged vectors is given by $1/(2-\alpha^h)$.

If the sequence of coefficients $\alpha_h$ is decreasing, the
coefficient of tail dependence is decreasing in $h$ as well and
converges to $1/2$ as $h \to\infty$. Thus, the extremal dependence
gets weaker over time and its speed of convergence depends solely on
the values of $\alpha_h, h \in\mathbb{N}$ (resp., on $\alpha
\in(0,1)$ in the AR(1) model). The strictly monotonic decay of the
coefficients of tail dependence seems a very reasonable assumption for
asymptotically independent time series. Corollary~\ref
{limitmeasuressvmodelsmonotone} shows that SV models with Gamma-type
log-volatility allow for all possible strictly monotonically decreasing
functions $h \mapsto\eta_h \in[1/2, 1]$, provided $\sum_{h=1}^\infty
(2-1/\eta_h)<\infty$. Moreover, it is also possible to
reproduce arbitrary finite sequences of (not necessarily decreasing)
coefficients of tail dependence $\eta_h$ as long as they reflect a
non-negative dependence (i.e., stay in the interval $[1/2,1]$).

\begin{theorem}\label{representeta}
To each vector $(\eta_1, \ldots, \eta_m) \in[1/2,1]^m$ ($m \in
\mathbb{N}$) there exists an SV model with Gamma-type log-volatility
$(X_t, \sigma_t)_{t \in\mathbb{Z}}$ such that the coefficient of
tail dependence of $(\sigma_0, \sigma_h)$ and $(X_0, X_h)$ equals
$\eta_h$ for all $1\leq h \leq m$.
\end{theorem}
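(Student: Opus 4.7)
Given the target $(\eta_1,\ldots,\eta_m)\in[1/2,1]^m$, I would construct an explicit coefficient sequence $(\alpha_i)$ of finite support in which each lag $h$ with $\eta_h>1/2$ receives one dedicated pair of coefficient positions, spaced so far apart that pairs for different $h$ do not interact in any of the linear programs \eqref{svmin}--\eqref{svconstraint}. Lags with $\eta_h=1/2$ receive no coefficients at all: for those $\sigma_0$ and $\sigma_h$ end up being functions of disjoint sub-collections of the i.i.d.\ innovations and are therefore honestly independent, so $\eta_h=1/2$ follows at once from the definition of the coefficient of tail dependence.

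\textbf{Construction.} Let $H:=\{h\in\{1,\ldots,m\}:\eta_h>1/2\}$ and, for $h\in H$, set $K_h:=10mh$. Define
\[
\alpha_0:=1,\qquad \alpha_{K_h}:=\alpha_{K_h+h}:=\eta_h\text{ for }h\in H,\qquad \alpha_i:=0\text{ otherwise.}
\]
These coefficients lie in $[0,1]$, have maximum $1$ and finite support, so Definition \ref{def:SVnew}(a) is trivially met. I would take $\eta_0\sim\mathrm{Exp}(1)$ (which fulfils \eqref{etaasymptotic} with $K=1$, $\beta=0$ and $E\eta_0^2<\infty$) and any bounded $\eps_0$ with $P(\eps_0>0)>0$, so that Theorem \ref{th:statsol} provides a stationary SV model. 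A short arithmetic check using $10m>m$ establishes the combinatorial lemma on which everything else rests: among the differences $s_1-s_2$ with $s_1,s_2\in S:=\{0\}\cup\{K_h,K_h+h:h\in H\}$, the only ones lying in $\{1,\ldots,m\}$ are the intended $h\in H$, realised solely by $(K_h+h)-K_h$.

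\textbf{The two regimes.} For $h\notin H$ the lemma tells us that no two points of $S$ differ by $h$, so the time indices $\{-i:i\in S\}$ and $\{h-i:i\in S\}$ of the $\eta_t$'s entering $\sigma_0$ and $\sigma_h$ are disjoint; hence $\sigma_0$ and $\sigma_h$ are independent (and so are $X_0$ and $X_h$ after adding the independent $\eps$'s), and the regular variation of $\sigma_0$ with index $-1$ from Theorem \ref{th:statsol} gives $P(\sigma_0>q,\sigma_h>q)=P(\sigma_0>q)^2$, a regularly varying function of index $-2$; the quantile transform yields $\eta_h=1/2$. For $h\in H$ the lemma implies that $i=K_h+h$ is the only index with $\alpha_i>0$ and $\alpha_{i-h}>0$, so $\kappa_{K_h+h}:=1/\eta_h$, with all other $\kappa_j=0$, is primal-feasible for \eqref{svmin}--\eqref{svconstraint} with objective $1/\eta_h$. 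The dual point $\mu_1=\mu_2=1/(2\eta_h)$ has the same value and is dual-feasible (every dual constraint reduces to something of the form $\le 1/(2\eta_h)\le 1$, which holds because $\eta_h\ge 1/2$); it is tight only at $i=K_h+h$, so by complementary slackness any primal optimum must vanish outside $K_h+h$, and the constraints then pin the value down to $1/\eta_h$. Thus the LP has the unique optimum $\kappa_{K_h+h}=1/\eta_h$, and part (ii) of Theorem \ref{limitmeasuressvmodels} (which uses $\alpha_i=\alpha_{i-h}$, met here with both sides equal to $\eta_h$) yields regular variation of $(\sigma_0,\sigma_h)$ and $(X_0,X_h)$ on $\E^2$ with index $-1/\eta_h$, so the coefficient of tail dependence equals $\eta_h$ for both vectors. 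The endpoint $\eta_h=1$ is absorbed with no extra effort: then $\alpha_{K_h}=\alpha_{K_h+h}=1$, the LP optimum is $1$, and the same duality argument gives uniqueness.

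\textbf{Main obstacle.} The whole argument is pinned on the combinatorial lemma that $K_h=10mh$ admits no unintended differences of elements of $S$ in $\{1,\ldots,m\}$: i.e.\ $K_{h_1}+\delta_1-K_{h_2}-\delta_2\in\{1,\ldots,m\}$ with $\delta_i\in\{0,h_i\}$ forces the pair to be the intended one. The check itself is elementary---half a dozen sub-cases, each killed by $|10m(h_1-h_2)|\ge 10m>m$ whenever $h_1\ne h_2$---but it is the sole ingredient powering both the independence step for $h\notin H$ and the complementary-slackness argument for $h\in H$, so verifying it carefully is really the crux of the proof. Everything downstream is a routine invocation of Theorem \ref{limitmeasuressvmodels} together with the definition of the coefficient of tail dependence.
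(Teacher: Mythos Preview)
Your proof is correct, and it takes a genuinely different route from the paper's.

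The paper constructs the coefficient sequence by placing, for each lag $h$, an \emph{asymmetric} pair $\alpha_{k(h)}=1$, $\alpha_{k(h)+h}=2-\eta_h^{-1}$. For $\eta_h\in(1/2,1)$ this leads to a linear program \eqref{svmin}--\eqref{svconstraint} with \emph{non-unique} optimal solution (because several indices carry the coefficient $1$, and the required slack in the second constraint can be distributed among them). The paper therefore cannot invoke Theorem~\ref{limitmeasuressvmodels} directly in that range; instead it bundles all factors with exponent $1$ into a single regularly varying factor $Z_h$ and appeals to Theorem~\ref{power-products-m} for the rewritten product. By contrast, your \emph{symmetric} choice $\alpha_{K_h}=\alpha_{K_h+h}=\eta_h$ forces the LP optimum to sit on a single index, and your duality argument with $\mu_1=\mu_2=1/(2\eta_h)$ neatly certifies uniqueness (the strict inequality $\eta_h>1/2$ is exactly what makes every other dual constraint slack). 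So you land in case~(ii) of Theorem~\ref{limitmeasuressvmodels} uniformly over all $h\in H$, including the endpoint $\eta_h=1$, and never need the detour through Theorem~\ref{power-products-m}. The extra $\alpha_0=1$ is a clean way to guarantee $\max_i\alpha_i=1$ regardless of the $\eta_h$'s, and your choice of bounded $\eps_0$ and exponential log-volatility innovations makes all moment conditions trivial. In short, the paper's construction is slightly more economical in the number of nonzero coefficients, but your construction makes the verification substantially simpler.
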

%

\begin{remark}\label{asymptoticdependence}
The preceding theorem shows that SV models with Gamma-type
log-volatility are also able to reflect $\eta_h=1$ for $h>0$. Remember
that asymptotic dependence of the vector $(X_0, X_h)$ implies $\eta
_h=1$ but not the other way round. In fact, it depends on the value of
$\beta$ in~(\ref{etaasymptotic}) whether our model allows for
asymptotic dependence of lagged observations.
\begin{longlist}[(ii)]
\item[(i)] If $\beta>-1$, then all vectors $(\sigma_0, \sigma_h)$ and
$(X_0, X_h)$ show asymptotic independence by Theorem~\ref
{pointprocesses} and the following conclusions.

\item[(ii)] If $\beta<-1$ and $\eta_h=1$ for some $h>0$, then the vectors
$(\sigma_0, \sigma_h)$ and $(X_0, X_h)$ show asymptotic dependence.
See Section~\ref{proofsection} for details.
\end{longlist}
\end{remark}

We conclude this section with a comparison of our work and the results
by Kulik and Soulier \cite{KuSo13}, Sections~3 and 4, who
consider a similar class of SV models. They analyze the limit distributions
%
\begin{equation}
\label{tailchain} \lim_{x\to\infty} P\bigl(\sigma_0 \leq
s_0x, \sigma_1\leq s_1x^{\rho
_1},\ldots, \sigma_h \leq s_h x^{\rho_h} \mid
\sigma_0>x\bigr)
\end{equation}
for a suitable choice of so-called ``conditional scaling exponents''
$\rho_j, j \in\mathbb{N}$, which lead to a non-degenerate limit. So
while we examine the joint extremal behavior of consecutive
volatilities or returns using regular variation on the cones $\E^d$,
Kulik and Soulier \cite{KuSo13} work in the framework of
conditional extreme value models, which are discussed, for example, in
Das and Resnick \cite{DaRe11}. In the case of asymptotic
dependence with $\rho_i=1$ for all $i \in\mathbb{N}$, the resulting
limit process is known as the tail process [cf. Basrak and Segers
\cite{BaSe09}]. Kulik and Soulier \cite{KuSo13} consider
an AR(1) model for the log-volatilities where the innovations $\eta_t,
t \in\mathbb{Z}$, have a double exponential distribution, that is, they
are symmetric with $P(\eta_t>x)=\exp(-\alpha x)/2$ for $x>0$.
This assumption fits into our model [we restrict our analysis to the
case $\alpha=1$ by standardization, cf. Remark~\ref
{remstandardization}(i)]. Furthermore, they deal with linear models
of the form (\ref{eqlogvol}) with double exponentially
distributed
innovations under the additional assumptions that $\sum_{i=1}^\infty
\alpha_i^2<\infty$ (i.e., they allow for a non-summable sequence of
coefficients) and that $\alpha_0=1, \alpha_i<1$ for all $i\geq1$.

Kulik and Soulier show that for those models a non-degenerate limit in
(\ref{tailchain}) exists if and only if the conditional scaling
exponents $\rho_j$ are chosen equal to $\alpha_j$ for all $j\in\N$.
As $\alpha_j<1$ for all $j\in\N$, this implies asymptotic
independence of consecutive volatilities and of consecutive returns.
Like Theorem~\ref{limitmeasuressvmodels}, convergence (\ref
{tailchain}) conveys refined information on their extremal dependence
structure, but the focus of the approach by Kulik and Soulier is quite
different from ours, and their mathematical techniques are in a sense
considerably simpler than the ones employed in the present paper.
Indeed, convergence (\ref{tailchain}) can be heuristically explained
by the classical ``Breiman's principle'', according to which the tail
behavior of a product is largely determined by the most heavy tailed
factor. Under the condition $\alpha_j<\alpha_0$ for all $j\in\N$, a
large value of $\sigma_0=\prod_{j=0}^\infty{\rm{e}}^{\alpha_j\eta
_{-j}}$ is most
likely caused by a large value of $\eta_0$. This in turn implies that,
given an extreme event at time 0, the lagged volatility $\sigma
_h=\prod_{j=0}^\infty{\rm{e}}^{\alpha_j\eta_{h-j}}$ will be
roughly of the order ${\rm{e}}^{\alpha_h\eta_0}$, which yields $\rho
_h=\alpha_h$ for all $h \in\mathbb{N}$.

In contrast, we consider events of the type that both $\sigma_0$ and
$\sigma_h$ exceed the same large threshold. The simple heuristic of
above fails in this setting since our results show that a single
extreme event at time 0 is not necessarily the most probable cause for
the joint exceedance. Instead one has to find combinations of two
factors ${\rm{e}}^{\eta_j}$ which are both sufficiently large (though
potentially smaller than the single factor considered in the
conditional extreme value approach) such that both products $\prod
_{j=0}^\infty{\rm{e}}^{\alpha_j\eta_{-j}}$ and $\prod_{j=0}^\infty
{\rm{e}}^{\alpha_j\eta_{h-j}}$ are large, which leads to the linear
optimization problems investigated in the Sections~\ref
{sectpowerproducts} and~\ref{sectasymptoticsSV}. This clearly shows
that in general one should neither expect a simple relationship between
the coefficients of tail dependence obtained in this paper on the one
hand and the conditional scaling exponents considered in Kulik and
Soulier \cite{KuSo13} on the other hand, nor between the
respective limiting measures arising in both approaches. This fact
somewhat qualifies the heuristic reasoning given in Section~1.5 of
Kulik and Soulier~\cite{KuSo13}.


\section{Proofs}\label{proofsection}

\subsection{Proofs to Section~\texorpdfstring{\protect\ref{sectBreiman}}{2}}

The following inequality will be useful. For all $\mathbf{x} \in
\mathbb{E}^d$ and $\mathbf{A} \in\mathbb{R}^{d \times d}$, we have
%
\begin{equation}
\label{tauinequality} \tau(\mathbf{Ax})= \tau(\mathbf{x})\tau\biggl
(\mathbf{A}
\frac{\mathbf{x}}{\tau(\mathbf{x})} \biggr) \leq\tau(\mathbf{x})\tau
(\mathbf{A}).
\end{equation}

\begin{pf*}{Proof of Lemma \ref{lemmatauprop}}
\begin{longlist}
\item[\textbf{``$\mathbf{(i) \Rightarrow(ii)}$''}:]
By definition $\tau(\mathbf{A})=0$ if and only if $\mathbf{A}\SS^d$
and $\E^d$ are disjoint, which in turn is equivalent to $\mathbf
{A}^{-1}(\E^d)\cap\E^d=\varnothing$.

Now suppose that $\tau(\mathbf{A})=\infty$. Then there exists
$\mathbf{x}\in\SS^d$ such that $\min(\mathbf{Ax})>-\min(-\mathbf
{A1})$. Thus $\mathbf{y}:=\mathbf{x}-\mathbf{1}\in\OO^d\subset(\E
^d)^c$ and $\min(\mathbf{Ay})\geq\min(\mathbf{Ax})+\min(-\mathbf
{A1})>0$, which implies \mbox{$\mathbf{Ay}\in\E^d$}, and hence $\mathbf
{A}^{-1}(\E^d)\not\subset\E^d$. Since this contradicts (i), we have
shown that (i) $\Rightarrow$ (ii).

\item[\textbf{``$\mathbf{(ii) \Rightarrow(iii)}$''}:]
Note that $\mathbf{A}$ must be invertible if $\tau(\mathbf{A})\in
(0,\infty)$. To see this, suppose that $\mathbf{A}$ were not
invertible and choose some $\mathbf{y}\ne\mathbf{0}$ satisfying
$\mathbf{A}\mathbf{y}=0$. Because $\tau(\mathbf{A})>0$, there
exists a vector $\mathbf{x}\in\E^d$ such that $\tau(\mathbf
{Ax})>0$. Moreover, for some $\lambda_0\in\R$ one has $\mathbf
{x}+\lambda_0\mathbf{y}\in\OO^d$ and $\mathbf{x}+\lambda\mathbf
{y}\in\E^d$ for all $\lambda$ between 0 and $\lambda_0$. By (\ref{tauinequality}), we have for those $\lambda$
\[
\tau(\mathbf{A})\ge\frac{\tau(\mathbf{A}(\mathbf{x}+\lambda
\mathbf{y}))}{\tau(\mathbf{x}+\lambda\mathbf{y})}=\frac{\tau
(\mathbf{Ax})}{\tau(\mathbf{x}+\lambda\mathbf{y})} \to\infty%
\]
as $\lambda\to\lambda_0$. Since this contradicts the assumption
$\tau(\mathbf{A})\in(0,\infty)$, the matrix $\mathbf{A}$ must be
invertible. In addition, if $\tau(\mathbf{A})\in(0,\infty)$ then
all entries of $\mathbf{A}^{-1}$ must be non-negative. To see this,
suppose that there exists a negative entry. Then there exists $\tilde
{\mathbf{y}} \in\mathbb{E}^d$ with $\mathbf{y}:=\mathbf
{A}^{-1}\tilde{\mathbf{y}} \in([0,\infty)^d)^c$. Furthermore, as
above there exists $\mathbf{x}\in\E^d$ such that $\mathbf{Ax}\in\E
^d$. Choose $\lambda_0>0$ such that $\mathbf{x}+\lambda_0\mathbf
{y}\in\OO^d$. Then, for all $\lambda\in[0,\lambda_0)$ we have
\[
\tau(\mathbf{A})\ge\frac{\tau(\mathbf{A}(\mathbf{x}+\lambda
\mathbf{y}))}{\tau(\mathbf{x}+\lambda\mathbf{y})}= \frac{\tau
(\mathbf{Ax}+\lambda\tilde{\mathbf{y}})}{\tau(\mathbf{x}+\lambda
\mathbf{y})}\ge\frac{\tau(\mathbf{Ax})}{\tau(\mathbf{x}+\lambda
\mathbf{y})}
\to\infty%
\]
by (\ref{tauinequality}) as $\lambda\nearrow\lambda_0$. Since this
contradicts the assumption, (iii) follows from (ii).

\item[\textbf{``$\mathbf{(iii) \Rightarrow(i)}$''}:]
If $\mathbf{A}$ is
invertible with non-negative entries of $\mathbf{A}^{-1}$, then each
row of $\mathbf{A}^{-1}$ has at least one strictly positive entry and
thus $\mathbf{A}^{-1}\mathbf{x} \in\mathbb{E}^d$ for all $\mathbf
{x} \in\mathbb{E}^d$. Therefore, (i) follows from~(iii).

The last statement of the lemma follows from (i) by continuity of the
linear mapping $\mathbf{x} \mapsto\mathbf{A}^{-1}\mathbf{x}$.
\end{longlist}
\end{pf*}

\begin{pf*}{Proof of Lemma \ref{twolemmas}}
Note first that we have $\{\mathbf{x} \in\E^d\dvtx  \tau(\mathbf{x})=y\}
=y \cdot\mathbf{1}+\mathcal{O}^d$ for all $y>0$. Now, since $0<\tau
(\mathbf{A})<\infty$ by assumption and $\tau(t\mathbf{x})=t\tau
(\mathbf{x})$ for all $\mathbf{x} \in\mathbb{E}^d$ and $t>0$, it
follows from the definition of $\tau(\mathbf{A})$ that
\begin{eqnarray*}
\label{maximizeorminimize} 1&=&\inf_{\mathbf{x} \in
\mathbb{E}^d\dvtx  \tau(\mathbf{Ax})=\tau(\mathbf{A})}\tau(\mathbf{x})
\\
&=& \inf_{\mathbf{x}\dvtx  \mathbf{Ax} \in \tau(\mathbf
{A})\mathbf{1}+\mathcal{O}^d}\tau(\mathbf{x})
\\
&=& \inf_{\mathbf{y} \in\mathcal{O}^d}\tau\bigl(\mathbf{A}^{-1}\tau(
\mathbf{A})\mathbf{1}+\mathbf{A}^{-1}\mathbf{y}\bigr)
\\
&=& \tau\bigl(\mathbf{A}^{-1}\tau(\mathbf{A})\mathbf{1}\bigr)=\tau(
\mathbf{A})\tau\bigl(\mathbf{A}^{-1}\mathbf{1}\bigr),
\end{eqnarray*}
where in the last but one equation we have used that $\mathbf
{A}^{-1}\mathbf{y} \in[0,\infty)^d$ because all entries of $\mathbf
{A}^{-1}$ are non-negative by Lemma~\ref{lemmatauprop}. Thus, $\tau
(\mathbf{A})=(\tau(\mathbf{A}^{-1}\mathbf{1}))^{-1}$ and the
statement follows from the definition of $\tau$.
\end{pf*}

\begin{pf*}{Proof of Theorem \ref{hiddenBreiman}}
For all Borel sets $B \subset\mathbb{E}^d$ bounded away from $\OO^d$
there exists a constant $\delta_B>0$ such that $\tau(\mathbf
{x})=\min(\mathbf{x})>\delta_B$ for all $\mathbf{x} \in B$. Hence,
for all $x>0$, $M>0$,
\begin{eqnarray*}
P\bigl(\mathbf{AZ} \in xB, \tau(\mathbf{A})>M\bigr) &\leq& P\bigl(\tau(
\mathbf{AZ})>x\delta_B, \tau(\mathbf{A})>M\bigr)
\\
&\leq& P\bigl(\tau(\mathbf{A})\min(\mathbf{Z})>x\delta_B, \tau(
\mathbf{A})>M\bigr)
\\
&=&P\bigl(\mathds{1}_{\{\tau(\mathbf{A})>M\}}\tau(\mathbf{A})\min
(\mathbf{Z})>x
\delta_B\bigr),
\end{eqnarray*}
where the second inequality follows from (\ref{tauinequality}) and
because $\mathbf{AZ} \in xB$ implies $\mathbf{Z} \in\mathbb{E}^d$
by Lemma~\ref{lemmatauprop}.
Since $\min(\mathbf{Z})$ is regularly varying with index $-\alpha$
and $\mathbf{A}$ and $\mathbf{Z}$ are assumed to be independent, the
univariate version of Breiman's lemma in combination with (\ref
{finitemoment}) yields
%
\begin{equation}
\label{eqnegligible} \limsup_{x \to\infty} \frac{P(\mathbf{AZ} \in xB,
\tau(\mathbf
{A})>M)}{P(\min(\mathbf{Z})>x)}\leq
\delta_B^{-\alpha}E \bigl(\tau(\mathbf{A})^\alpha
\mathds{1}_{\{\tau(\mathbf{A})>M\}} \bigr).
\end{equation}
Since, by (\ref{tauinequality}),
\[
\tau\bigl(\mathbf{A}^{-1}\mathbf{x}\bigr)\geq\frac{\tau(\mathbf{A}
\mathbf
{A}^{-1}\mathbf{x})}{\tau(\mathbf{A})}\geq
\frac{\delta_B}{\tau
(\mathbf{A})}>0
\]
a.s. for all $\mathbf{x} \in B$, the image of $B$ under $\mathbf
{A}^{-1}$ is again a.s. bounded away from $\OO^d$.
Hence,
%
\begin{eqnarray}
\label{Pratt} \lim_{x \to\infty} \frac{P(\mathbf{A}\mathbf{Z}
\in xB, \tau(\mathbf{A})\leq M)}{P(\min(\mathbf{Z})>x)} &=& \lim
_{x \to\infty} \int_{\{\tau(\mathbf{a}) \leq M\}}\frac{P(\mathbf
{Z} \in x\mathbf{a}^{-1}B)}{P(\min(\mathbf{Z})>x)}
\,\mathrm{d}P^\mathbf{A}(\mathbf{a})
\\
\nonumber
&=& \int_{\{\tau(\mathbf{a}) \leq M\}} \nu\bigl(\mathbf{a}^{-1}B
\bigr) \,\mathrm{d}P^{\mathbf{A}}(\mathbf{a})
\\
\label{mainpart}& = & E\bigl(\nu\bigl(\mathbf{A}^{-1}B\bigr)
\mathds{1}_{\{\tau
(\mathbf{A}) \leq M\}}\bigr),
\end{eqnarray}
where $P^{\mathbf{A}}$ denotes the law of $\mathbf{A}$. For the
second equation, we used that $E(\nu(\partial(\mathbf{A}^{-1}B)))=0$
(i.e., $\nu(\partial(\mathbf{A}^{-1} B))=0$ a.s.) in combination
with Pratt's lemma [cf. Pratt \cite{Pr60}], since
the integrand in~(\ref{Pratt}) is bounded by
\[
\frac{P(\mathbf{aZ} \in xB)}{P(\min(\mathbf{Z})>x)} \leq\frac
{P(\tau(\mathbf{a})\min(\mathbf{Z}) > x\delta_B)}{P(\min(\mathbf
{Z})>x)} \leq\frac{P(M\min(\mathbf{Z}) > x\delta_B)}{P(\min
(\mathbf{Z})>x)} \to
M^\alpha\delta_B^{-\alpha}.
\]
Let $M \to\infty$ in (\ref{eqnegligible}) and (\ref{mainpart}) to
obtain (\ref{theorem}) by monotone convergence.
Note that $E(\nu(\mathbf{A}^{-1}B))$ is finite since, by the
definition of $\tau$ and homogeneity of $\nu$,
\begin{eqnarray*}
E\bigl(\nu\bigl(\mathbf{A}^{-1}B\bigr)\bigr) & \leq& E\bigl(\nu\bigl(
\mathbf{A}^{-1}[\delta_B,\infty)^d
\bigr)\bigr)\leq E\bigl(\nu\bigl(\bigl[\delta_B/\tau(\mathbf{A}),\infty
\bigr)^d\bigr)\bigr)
\\
&=& \nu\bigl([\delta_B,\infty)^d\bigr) E\bigl(\tau(
\mathbf{A})^{\alpha}\bigr)< \infty.
\end{eqnarray*}\upqed
\end{pf*}

\subsection{Proofs to Section~\texorpdfstring{\protect\ref{sectSVmodels}}{3}}
\begin{pf*}{Proof of Theorem \ref{thstatsol}}
Let $\Lambda:=\{n \in\mathbb{N}_0\dvtx \alpha_n=1\}, k:=\llvert \Lambda
\rrvert $ (our
assumptions guarantee that $k<\infty$),
%
\begin{equation}
\label{betahat} \hat{\beta}:=\cases{ k\beta+k-1, &\quad$\beta>-1$,
\cr
\beta, &
\quad$\beta<-1$}
\end{equation}
and
%
\begin{equation}
\label{Khat} \hat{K}:=\cases{ \displaystyle K^k \frac{\Gamma(\beta
+1)^k}{\Gamma(k(\beta
+1))}E
\biggl(\exp\biggl(\sum_{n \notin\Lambda} \alpha_n
\eta_n \biggr) \biggr), &\quad$\beta>-1$,
\vspace*{3pt}\cr
\displaystyle k K E
\bigl(\exp(\eta_0)\bigr)^{k-1}E \biggl(\exp\biggl(\sum
_{n
\notin\Lambda} \alpha_n \eta_n
\biggr) \biggr), &\quad$\beta<-1$}
\end{equation}
[cf. equations (7.8) and (7.9) in Rootz{\'e}n \cite{Ro86}].
It follows from Lemma 7.2 in Rootz{\'e}n \cite{Ro86} that
%
\begin{equation}
\label{tailstationary} P(\log\sigma_t>z)=P \Biggl(\sum
_{i=0}^\infty\alpha_i\eta_{t-i}>z
\Biggr) \sim\hat{K}z^{\hat{\beta}} {\rm{e}}^{-z},\qquad z \to\infty, t \in
\mathbb{Z}.
\end{equation}
Since the $\varepsilon_t, t \in\mathbb{Z}$, are assumed to be
independent of the $\eta_t, t \in\mathbb{Z}$, the stationary
solution to (\ref{sigmadef}) implies the existence of a stationary
solution $(X_t, \sigma_t)_{t \in\mathbb{Z}}$. For $s>0$, one can
conclude from (\ref{tailstationary}) that
\[
\lim_{x \to\infty}\frac{P(\sigma_0>s x)}{P(\sigma_0> x)} = \lim_{x \to
\infty}
\frac{ (\log(s x) )^{\hat{\beta}}(s x)^{-1}}{ (\log
x )^{\hat{\beta}} x^{-1}} =s^{-1}, %
\]
which shows regular variation with index $-1$ of the marginal
distributions of $(\sigma_t)_{t \in\mathbb{Z}}$.

The regular variation of the marginal distributions of $\llvert
X_0\rrvert =\sigma
_0\llvert \eps_0\rrvert $ and $X_0^+=\sigma_0\eps_0^+$ follows by
Breiman's lemma,
which (under our moment assumptions on $\varepsilon_0$) gives
\[
P\bigl(\sigma_0\llvert\eps_0\rrvert>x\bigr)\sim E
\bigl(\llvert\eps_0\rrvert\bigr) P(\sigma_0>x) \quad
\mbox{and}\quad P\bigl(\sigma_0 \eps_0^+>x\bigr)\sim
E\bigl(\eps_0^+\bigr) P(\sigma_0>x).
\]
Now the tail balance assertion (\ref{tail-balanced}) and relation
(\ref{eqXtails}) between the tails of $X_0$ and $\sigma_0$ are obvious.

The last statement follows from (\ref{eqXtails}) in combination with
Theorem 7.3 in Rootz{\'e}n \cite{Ro86}.
\end{pf*}

\begin{pf*}{Proof of Theorem \ref{pointprocesses}}

\begin{longlist}
\item[\textbf{Proof of (i).}] It follows from Theorem 7.4 in
Rootz{\'e}n \cite{Ro86} that the point processes
\[
N_n^{\log\sigma}(\cdot):=\sum_{i=1}^n
\delta_{(i/n, \log\sigma
_i-\log n -\hat{\beta}\log(\log n ) -\log\hat{K})}(\cdot),\qquad n \in\mathbb{N},
\]
converge weakly to a Poisson process $N^{\log\sigma}$ on $[0,
1]\times(-\infty, \infty]$ with intensity measure $\mathrm{d}t \times{\rm
{e}}^{-x}\,\mathrm{d}x$. Now, $\exp(\cdot)$ is a continuous function such that
the pre-image of a set $B\subset(0, \infty]$ is bounded away from
$-\infty$ if $B$ is bounded away from $0$. We may thus apply
Proposition 5.5 in Resnick \cite{Re07} to derive that
$N_n^\sigma$ converges in $M_p([0,1]\times(0, \infty])$ to a Poisson
process with intensity $\mathrm{d}t \times z^{-2}\,\mathrm{d}z$ on $[0, 1] \times(0,
\infty]$ which we denote by $N^\sigma$.\vadjust{\goodbreak}

\item[\textbf{Proof of (ii).}] To
derive the second assertion, for $n \in\mathbb{N}$, introduce the
point process $N_n^{(\sigma,\varepsilon)}$ which consists of the points
$(i/n, \sigma_i/a_n,\varepsilon_i), i=1,2,\ldots,n$. The first
assertion implies that $N_n^{(\sigma,\varepsilon)}$ converges weakly to
the Poisson process $N^{(\sigma,\varepsilon)}$ with points
$(t_{(i)},z_{(i)},\varepsilon_{(i)})_{i \in\mathbb{N}}$ which has the
intensity $\mathrm{d}t \times z^{-2}\,\mathrm{d}z \times \mathrm{d}P^{\varepsilon_0}$. One may
now proceed
similarly as in the proof of Proposition 7.5 in Resnick
\cite{Re07} to show that the point processes $N_n^{X}, n \in
\mathbb{N}$, which consist of the points $(i/n, a_n^{-1}\sigma_i\cdot
\varepsilon_i), i=1,2,\dots,n$, converge to a Poisson point process
$N^{X}$ with points $(t_{(i)},z_{(i)}\cdot\varepsilon_{(i)})_{i \in
\mathbb{N}}$. Note that the additional first component $t_{(i)}$ of
the points, the dependence between the $\sigma_i, i=1,2, \dots,$ and
the possibly negative sign of the $\varepsilon_i$ do not cause
substantial
changes in the course of the proof. The derivation of the stated
intensity $\mathrm{d}t \times z^{-2} [\mathds{1}_{\{z<0\}}E(\varepsilon
_0^-)+\mathds{1}_{\{z>0\}}E(\varepsilon_0^+) ]\,\mathrm{d}z$ of the process
$N^{X}$ follows by an application of Proposition 5.2 in Resnick \cite
{Re07} to the continuous mapping
\[
T\dvtx [0,1]\times(0, \infty] \times\bigl((-\infty, \infty)\setminus\{0\}
\bigr) \to
D,\qquad (t,x,y) \mapsto(t,x\cdot y)
\]
in combination with a truncation argument like in step 4 of the proof
of Proposition 7.5 in Resnick~\cite{Re07}.\quad\qed
\end{longlist}\noqed
\end{pf*}

\subsection{Proofs to Section~\texorpdfstring{\protect\ref{sectpowerproducts}}{4}}

We start with a technical result on the tail behavior of a product of
two factors.

\begin{lemma}\label{power-products-simple}
Let $X, Y \geq0$ be two independent random variables, such that both
$X$ and $Y$ are regularly varying with index $-1$. Then, for $\alpha,
\beta$ such that $0\leq\beta<\min\{1,\alpha\}$ and all $\varepsilon
>0$ there exists an $M=M(\varepsilon)>0$ such that
%
\begin{equation}
\label{power-products-simple-2} (1-\varepsilon)\frac{\alpha}{\alpha-\beta
}s^{-1/\alpha\mp\varepsilon} \leq
\frac{P(X>x, X^\beta Y^\alpha>sx)}{P(X>x) P(Y^\alpha>x^{1-\beta
})}\leq(1+\varepsilon)\frac{\alpha}{\alpha-\beta}s^{-1/\alpha\pm
\varepsilon}
\end{equation}
for all $s,x>0$ such that $\min\{x^{1-\beta}, sx^{1-\beta}\}>M$.
\end{lemma}

\begin{pf}
If $\beta=0$, then the statement follows from the independence of $X$
and $Y$ and by applying the Potter bounds to the function $x \mapsto
P(Y^\alpha>x)$ which is regularly varying with index $-1/\alpha$.

In what follows, assume $\beta>0$. For a start, check that
%
\begin{eqnarray}\label{twosummands}
\nonumber
&& \frac{P(X>x, X^\beta Y^\alpha>sx)}{P(X>x)P
(Y^\alpha>x^{1-\beta} )}
\\
\nonumber
&&\quad = \frac{\int_0^\infty P (X>\max\{x,(sx/y)^{1/\beta
}\} ) P^{Y^\alpha}(\mathrm{d}y)}{P(X>x)P (Y^\alpha>x^{1-\beta
} )}
\nonumber\\[-8pt]\\[-8pt]\nonumber
&&\quad = \int_0^{sx^{1-\beta}} \frac{P (X>(sx/y)^{1/\beta
} )}{P(X>x)P (Y^\alpha>x^{1-\beta} )}
P^{Y^\alpha
}(\mathrm{d}y)
\\
&&\qquad {} +\int_{sx^{1-\beta}}^\infty
\frac{P (X>x )}{P(X>x)P (Y^\alpha>x^{1-\beta}
)} P^{Y^\alpha}(\mathrm{d}y).\nonumber
\end{eqnarray}
The second summand equals
%
\begin{equation}
\label{firstpotter} \frac{P(Y^\alpha>sx^{1-\beta})}{P(Y^\alpha
>x^{1-\beta})} \in\bigl[(1-\varepsilon)s^{-1/\alpha\mp\varepsilon},(1+
\varepsilon)s^{-1/\alpha\pm
\varepsilon} \bigr]
\end{equation}
for $\varepsilon>0$ if $\min\{ sx^{1-\beta},x^{1-\beta}\}>N$ for some
$N=N(\varepsilon)$ by the Potter bounds.
Again by the Potter bounds,
to each $\varepsilon>0$ there exists $N'=N'(\varepsilon)$ such that for all $x>N'$
%
\begin{eqnarray}\label{momenttail}
\nonumber
&& \int_0^{sx^{1-\beta}}
\frac{P
(X>(sx/y)^{1/\beta} )}{P(X>x)} P^{Y^\alpha}(\mathrm{d}y)
\\
&&\quad \leq (1+\varepsilon) \int
_0^{sx^{1-\beta}} \biggl(
\frac{(sx/y)^{1/\beta}}{x} \biggr)^{-1+\eps} P^{Y^\alpha}(\mathrm{d}y)
\\
&&\quad  =  (1+\varepsilon) \bigl(sx^{1-\beta}\bigr)^{-(1-\varepsilon)/\beta} \int
_0^{sx^{1-\beta}}y^{(1-\varepsilon)/\beta} P^{Y^\alpha}(\mathrm{d}y),\nonumber
\end{eqnarray}
because $(sx/y)^{1/\beta}>x$.
Since the distribution of $Y^\alpha$ is regularly varying with index
$-1/\alpha$ and $(1-\eps)/\beta>1/\alpha$ for sufficiently small
$\varepsilon>0$ by assumption, a generalization of Karamata's theorem
[Bingham  {\it et~al.} \cite{BiGoTe87}, Theorem 1.6.4] yields
\[
\lim_{t\to\infty} \frac{\int_0^t y^{(1-\varepsilon)/\beta}
P^{Y^\alpha}(\mathrm{d}y)}{t^{(1-\varepsilon)/\beta} P(Y^\alpha>t)} = \frac{1/\alpha
}{(1-\varepsilon)/\beta-1/\alpha}
\\
= \frac{\beta}{\alpha(1-\varepsilon)-\beta}. %
\]
Thus, for a suitable $N''=N''(\varepsilon)$ and $x>N''$, the integral in
(\ref{momenttail}) is bounded from above by
\[
(1+\varepsilon)\frac{\beta}{\alpha(1-\varepsilon)-\beta}\bigl(sx^{1-\beta
}\bigr)^{(1-\varepsilon)/\beta} P
\bigl(Y^\alpha>sx^{1-\beta} \bigr). %
\]
Hence, by (\ref{firstpotter}), the first summand in (\ref
{twosummands}) is bounded by
%
\begin{equation}
\label{secondpotter} (1+\varepsilon)^3 \frac{\beta}{\alpha(1-\varepsilon
)-\beta} s^{-1/\alpha\pm\varepsilon}
\end{equation}
for $x$ large enough.
It follows from (\ref{firstpotter}) and (\ref{secondpotter}) that for
given $\varepsilon>0$ one may find a suitable $\delta>0$ and a
corresponding constant $M(\delta)$ such that
\begin{eqnarray*}
\frac{P(X>x, Y^\alpha X^\beta>sx)}{P(X>x) P(Y^\alpha
>x^{1-\beta})}
&\leq& (1+\delta) s^{-1/\alpha\pm\delta}+(1+\delta)^3
\frac
{\beta}{\alpha(1-\delta)-\beta}s^{-1/\alpha\pm\delta}
\\
&\leq& (1+\varepsilon)\frac{\alpha}{\alpha-\beta}s^{-1/\alpha\pm
\varepsilon},
\end{eqnarray*}
for $\min\{sx^{1-\beta},x^{1-\beta}\}>M(\delta)$ which gives the
upper bound in (\ref{power-products-simple-2}).

Using the lower Potter bound instead of the upper one and proceeding
analogously, we arrive~at
\[
(1-\varepsilon)^3s^{-1/\alpha\mp\varepsilon}\frac{\beta}{\alpha
(1+\varepsilon)-\beta}\quad\mbox{and}\quad (1-
\varepsilon)s^{-1/\alpha\mp\varepsilon}
\]
as lower bounds for the first and second summand in (\ref
{twosummands}), respectively, which leads to the lower bound in (\ref
{power-products-simple-2}).
\end{pf}


\begin{pf*}{Proof of Proposition \ref{power-products}}
By our assumptions, the solution $(\kappa_1, \kappa_2)$ to the linear
program (\ref{linearequations}) is unique with $\kappa_1>0$ as well
as $\kappa_2>0$. Therefore, it is impossible that $(\alpha_1 \geq
\beta_1, \alpha_2 \geq\beta_2)$ or $(\alpha_1\leq\beta_1, \alpha
_2 \leq\beta_2)$, since this would imply a redundant restriction in
(\ref{linearequations}) and thus multiple solutions or $\min\{\kappa
_1,\kappa_2\}=0$. Hence, one of the points $(\alpha_i,\beta_i)$,
$i\in\{1,2\}$, must lie above or on the main diagonal and one point
below or on the main diagonal. W.l.o.g., we may assume that $(\alpha
_1,\beta_1)$ lies below or on the main diagonal, that is, $\alpha
_1\geq\beta_1$. (Otherwise, interchange
$(\alpha_1,\beta_1, X_1, \kappa_1)$ and $(\alpha_2,\beta_2, X_2,
\kappa_2)$, which leaves the assertion unchanged.)

Next, note that
$(\alpha_1 \geq\alpha_2, \beta_1 \geq\beta_2)$ or $(\alpha_1\leq
\alpha_2, \beta_1 \leq\beta_2)$ would imply that a solution with
either $\kappa_1=0$ or $\kappa_2=0$ exists.
Furthermore, $\alpha_1=\alpha_2$ or $\beta_1=\beta_2$ will
always lead to multiple solutions or $\min\{\kappa_1,\kappa_2\}=0$.
Hence, we may conclude that $\min\{\alpha_1,\beta_2\}\ge\max\{
\alpha_2,\beta_1\}$ and that $\alpha_1>\alpha_2$ and $\beta
_2>\beta_1$.

Obviously, the equations
%
\begin{equation}
\label{kappaproperty} \kappa_1\alpha_1+\kappa_2
\alpha_2=1,\qquad \kappa_1\beta_1+
\kappa_2\beta_2=1
\end{equation}
must hold, so that
\[
\kappa_1=\frac{\beta_2-\alpha_2}{\alpha_1\beta_2-\alpha_2\beta
_1}, \qquad\kappa_2=
\frac{\alpha_1-\beta_1}{\alpha_1\beta
_2-\alpha_2\beta_1}. %
\]

By the Potter bounds,
%
\begin{equation}
\label{firstfactorinterval} \frac{P(z_1X_1>x^{\kappa
_1})P(z_2X_2>x^{\kappa_2})}{P(X_1>x^{\kappa
_1})P(X_2>x^{\kappa_2})} \in\bigl[(1-\varepsilon)z_1^{1\mp\varepsilon
}z_2^{1\mp\varepsilon},
(1+\varepsilon)z_1^{1\pm\varepsilon}z_2^{1\pm\varepsilon} \bigr]
\end{equation}
for all $\varepsilon>0$ provided $\min\{x^{\kappa_1}, z_1^{-1}x^{\kappa
_1}, x^{\kappa_2}, z_2^{-1}x^{\kappa_2}\}>N(\varepsilon)$ for
sufficiently large $N(\eps)$.

Set for abbreviation $\tilde{X}_i:=z_iX_i$, $i=1,2$, and note that
%
\begin{eqnarray}\label{foursummands}
&& \frac{P((z_1X_1)^{\alpha_1} (z_2X_2)^{\alpha
_2}>x,(z_1X_1)^{\beta_1} (z_2X_2)^{\beta_2} >x)}{P(z_1X_1>x^{\kappa
_1})P(z_2X_2>x^{\kappa_2})}
\nonumber
\\
\nonumber
&&\quad  =  \frac{P(\tilde{X}_1^{\alpha_1} \tilde{X}_2^{\alpha
_2}>x,\tilde{X}_1^{\beta_1} \tilde{X}_2^{\beta_2} >x, \tilde
{X}_1\leq x^{\kappa_1},\tilde{X}_2 \leq x^{\kappa_2})}{P(\tilde
{X}_1>x^{\kappa_1})P(\tilde{X}_2>x^{\kappa_2})}
\\
&&\qquad{} - \frac{P(\tilde{X}_1^{\alpha_1} \tilde
{X}_2^{\alpha_2}>x,\tilde{X}_1^{\beta_1} \tilde{X}_2^{\beta_2} >x,
\tilde{X}_1> x^{\kappa_1}, \tilde{X}_2> x^{\kappa_2})}{P(\tilde
{X}_1>x^{\kappa_1})P(\tilde{X}_2>x^{\kappa_2})}
\\
&&\qquad {}+ \frac{P(\tilde{X}_1^{\alpha_1} \tilde
{X}_2^{\alpha_2}>x,\tilde{X}_1^{\beta_1} \tilde{X}_2^{\beta_2} >x,
\tilde{X}_1> x^{\kappa_1})}{P(\tilde{X}_1>x^{\kappa_1})P(\tilde
{X}_2>x^{\kappa_2})}\nonumber
\\
&&\qquad{} + \frac{P(\tilde{X}_1^{\alpha_1} \tilde
{X}_2^{\alpha_2}>x,\tilde{X}_1^{\beta_1} \tilde{X}_2^{\beta_2} >x,
\tilde{X}_2> x^{\kappa_2})}{P(\tilde{X}_1>x^{\kappa_1})P(\tilde
{X}_2>x^{\kappa_2})}.\nonumber
\end{eqnarray}
For $x\geq1$ the first and second summand equal 0 and $-1$,
respectively, because of (\ref{kappaproperty}).

Next, note that, again by (\ref{kappaproperty}),
\begin{eqnarray*}
&& P\bigl(\tilde{X}_1^{\alpha_1}\tilde{X}_2^{\alpha_2}>x,
\tilde{X}_1^{\beta_1} \tilde{X}_2^{\beta_2}
>x, \tilde{X}_1> x^{\kappa
_1}\bigr)
\\
&&\quad = P \biggl( \biggl(\frac{\tilde{X}_1}{x^{\kappa_1}} \biggr)^{\alpha_1}
\tilde{X}_2^{\alpha_2}> x^{1-\alpha_1\kappa_1}, \biggl(
\frac{\tilde
{X}_1}{x^{\kappa_1}} \biggr)^{\beta_1}\tilde{X}_2^{\beta_2} >
x^{1-\beta_1\kappa_1},\tilde{X}_1> x^{\kappa_1} \biggr)
\\
&&\quad = P \biggl( \biggl(\frac{\tilde{X}_1}{x^{\kappa_1}} \biggr)^{\alpha_1}
\tilde{X}_2^{\alpha_2}> x^{\alpha_2\kappa_2}, \biggl(
\frac{\tilde
{X}_1}{x^{\kappa_1}} \biggr)^{\beta_1}\tilde{X}_2^{\beta_2}
>x^{\beta
_2\kappa_2}, \tilde{X}_1> x^{\kappa_1} \biggr)
\\
&&\quad = P \biggl( \biggl(\frac{\tilde{X}_1}{x^{\kappa_1}} \biggr)^{\alpha
_1/\alpha_2}
\tilde{X}_2>x^{\kappa_2}, \biggl(\frac{\tilde{X}_1}{x^{\kappa_1}}
\biggr)^{\beta_1/\beta
_2}\tilde{X}_2>x^{\kappa_2},
\tilde{X}_1> x^{\kappa_1} \biggr),
\end{eqnarray*}
where $ (\tilde{X}_1/x^{\kappa_1} )^{\alpha_1/\alpha
_2}:=\infty$ if $\alpha_2=0$.
According to the above discussion, we have $\beta_1/\beta_2<1<\alpha
_1/\alpha_2$. Therefore, the last probability equals
\begin{eqnarray*}
P \biggl( \biggl(\frac{\tilde{X}_1}{x^{\kappa_1}} \biggr)^{\beta_1/\beta
_2}\tilde{X}_2>x^{\kappa_2},
\tilde{X_1}> x^{\kappa_1} \biggr) &=& P \bigl(
\tilde{X}_1^{\beta_1/\beta_2}\tilde{X}_2>x^{\kappa
_2+\kappa_1\beta_1/\beta_2},
\tilde{X}_1> x^{\kappa_1} \bigr)
\\
& = & P \bigl(\tilde{X}_1^{\beta_1\kappa_1}\tilde{X}_2^{\beta
_2\kappa_1}>x^{\kappa_1},
\tilde{X}_1> x^{\kappa_1} \bigr).
\end{eqnarray*}
Now, we substitute $u$ for $x^{\kappa_1}/z_1$, use (\ref
{kappaproperty}) and apply Lemma~\ref{power-products-simple} and the
Potter bounds to $x\mapsto P(X_2^{\beta_2 \kappa_1}>x)$:
\begin{eqnarray*}
&& \frac{P (\tilde{X}_1> x^{\kappa_1}, \tilde
{X}_1^{\beta_1\kappa_1}\tilde{X}_2^{\beta_2\kappa_1}>x^{\kappa
_1} )}{P(\tilde{X}_1> x^{\kappa_1}) P(\tilde{X}_2>x^{\kappa
_2})}
\\
&&\quad =\frac{P (z_1X_1> x^{\kappa_1}, (z_1X_1)^{\beta_1\kappa
_1}(z_2X_2)^{\beta_2\kappa_1}>x^{\kappa_1} )}{P(z_1X_1>
x^{\kappa_1}) P(z_2X_2>x^{\kappa_2})}
\\
&&\quad =\frac{P (X_1>u, X_1^{\beta_1\kappa_1}X_2^{\beta_2\kappa
_1}>z_1^{\beta_2\kappa_2}z_2^{-\beta_2\kappa_1}u )}{P(X_1>u)P(
X_2^{\beta_2\kappa_1}>u^{1-\beta_1\kappa_1})}\cdot\frac{P(
X_2^{\beta_2\kappa_1}>u^{1-\beta_1\kappa_1})}{P( X_2^{\beta
_2\kappa_1}>z_1^{\beta_2\kappa_2}z_2^{-\beta_2\kappa_1}u^{1-\beta
_1\kappa_1})}
\\
&&\quad \in \biggl[(1-\eta)^2\frac{\beta_2}{\beta_2-\beta_1} \biggl(\frac
{z_1^{\beta_2\kappa_2}}{z_2^{\beta_2\kappa_1}}
\biggr)^{\mp2\eta},(1+\eta)^2\frac{\beta_2}{\beta_2-\beta_1} \biggl(
\frac{z_1^{\beta_2\kappa_2}}{z_2^{\beta_2\kappa_1}} \biggr)^{\pm2\eta}
\biggr]
\end{eqnarray*}
if both $u^{1-\beta_1\kappa_1}=(x^{\kappa_1}/z_1)^{\beta_2 \kappa
_2}$ and $z_1^{\beta_2\kappa_2}z_2^{-\beta_2\kappa_1}u^{1-\beta
_1\kappa_1}=(x^{\kappa_2}/z_2)^{\beta_2\kappa_1}$ are larger than
some $N(\eta)$, which is the case if both $x^{\kappa_1}/z_1>M(\eta)$
and $x^{\kappa_2}/z_2>M(\eta)$ for a suitably chosen constant $M(\eta
)$. Adapting the value of $\eta$ to each $\varepsilon>0$, we may hence
find an $M'(\varepsilon)$ such that
%
\begin{eqnarray}\label{thirdsummandinterval}
\nonumber
&& \frac{P(\tilde{X}_1^{\alpha_1} \tilde
{X}_2^{\alpha_2}>x,\tilde{X}_1^{\beta_1} \tilde{X}_2^{\beta_2} >x,
\tilde{X}_1> x^{\kappa_1})}{P(\tilde{X}_1>x^{\kappa_1})P(\tilde
{X}_2>x^{\kappa_2})}
\nonumber\\[-8pt]\\[-8pt]\nonumber
&&\quad \in\biggl[(1-\varepsilon)\frac{\beta
_2}{\beta_2-\beta_1}z_1^{\mp\varepsilon}
z_2^{\mp\varepsilon},(1+\varepsilon)\frac{\beta_2}{\beta_2-\beta
_1}z_1^{\pm\varepsilon}z_2^{\pm\varepsilon}
\biggr]
\end{eqnarray}
if both $x^{\kappa_1}/z_1>M'(\varepsilon)$ and $x^{\kappa
_2}/z_2>M'(\varepsilon)$.

Analogously, one shows for the fourth summand in equation (\ref
{foursummands}) that
%
\begin{eqnarray}
\nonumber
&& \frac{P(\tilde{X}_1^{\alpha_1} \tilde
{X}_2^{\alpha_2}>x,\tilde{X}_1^{\beta_1} \tilde{X}_2^{\beta_2} >x,
\tilde{X}_2> x^{\kappa_2})}{P(\tilde{X}_1>x^{\kappa_1})P(\tilde
{X}_2>x^{\kappa_2})}
\nonumber\\[-8pt]\\[-8pt]\nonumber
&&\quad \in\label{fourthsummandinterval} \biggl[(1-\varepsilon)\frac{\alpha
_1}{\alpha_1-\alpha_2}z_1^{\mp\varepsilon}z_2^{\mp\varepsilon},
(1+\varepsilon)\frac{\alpha_1}{\alpha_1-\alpha_2}z_1^{\pm\varepsilon
}z_2^{\pm\varepsilon}
\biggr]
\end{eqnarray}
if both $x^{\kappa_1}/z_1>M''(\varepsilon)$ and $x^{\kappa
_2}/z_2>M''(\varepsilon)$ for a suitably chosen $M''(\varepsilon)$.

Finally, for $\varepsilon>0$, combining (\ref
{firstfactorinterval})--(\ref{fourthsummandinterval}) we arrive at
\begin{eqnarray*}
&& \frac{P((z_1X_1)^{\alpha_1} (z_2X_2)^{\alpha
_2}>x,(z_1X_1)^{\beta_1} (z_2X_2)^{\beta_2}>x)}{P(X_1>x^{\kappa
_1})P(X_2>x^{\kappa_2})}
\\
&&\quad \in \bigl[\bigl(1-\varepsilon'\bigr)Cz_1^{1\mp\varepsilon'}z_2^{1\mp\varepsilon
'},
\bigl(1+\varepsilon'\bigr)Cz_1^{1\pm\varepsilon'}z_2^{1\pm\varepsilon'}
\bigr]
\end{eqnarray*}
for a suitable choice of $\varepsilon'$ and a constant $N=N(\varepsilon)$
such that $\min\{x^{\kappa_1}, z_1^{-1}x^{\kappa_1}$,
$x^{\kappa_2}, z_2^{-1}x^{\kappa_2}\}>N$.
\end{pf*}

\begin{pf*}{Proof of Theorem \ref{power-products-infinite}}
\mbox{}

\textbf{1. Almost sure convergence and joint positivity of infinite products}

The almost sure convergence of $\prod_{i=1}^k X_i^{\alpha_i}$ (in $\R
$) as $k\to\infty$ is equivalent to the almost sure convergence of
$\sum_{i=1}^k \alpha_i (\log X_i)^+$ and the latter follows because
the series is non-decreasing and $E(\sum_{i=1}^\infty\alpha_i (\log
(X_i))^+)= \sum_{i=1}^\infty\alpha_i E(\log X_1)^+<\infty$ by our
assumptions. The almost sure convergence of $\prod_{i=1}^k X_i^{\beta
_i}$ follows analogously.

Since we have assumed that $P(Y_i>0)>0$ for $i=0,1$, there is a
positive probability for the event $\{X_i>0$ for all $i$
with $ \alpha_i+\beta_i>0\}$. Thus, the probability
\[
P \Biggl(\prod_{i=1}^\infty
X_i^{\alpha_i+\beta_i}>0 \Big| X_i>0 \mbox{ for all } i
\mbox{ with } \alpha_i+\beta_i>0 \Biggr), %
\]
which can only be 0 or 1 by the 0--1-law, must be equal to 1. Hence,
$P(Y_0Y_1>0) =P(Y_0>0, Y_1>0)>0$ as well.\vadjust{\goodbreak}

\textbf{2. Reduction to a finite-dimensional linear program}

In the following, we show that the infinite-dimensional linear program
(\ref{LOb}) is equivalent to the finite-dimensional one
%
\begin{eqnarray}\label{LObfinite}
&& \sum_{i=1}^{n^\ast}
\kappa_i \to\min!
\nonumber\\[-8pt]\\[-8pt]\nonumber
&&\quad \mbox{under the constraints } \sum_{i=1}^{n^\ast}
\alpha_i \kappa_i \geq1, \sum
_{i=1}^{n^\ast} \beta_i \kappa_i
\geq1, \kappa_i\ge0, \forall1 \leq i \leq{n^\ast},
\end{eqnarray}
for a suitably chosen ${n^\ast}$. To this end, choose $i^\ast, j^\ast
\in\mathbb{N}_0$ such that $\alpha_{i^*}=\sup_{i\in\N} \alpha_i$
and $\beta_{j^*}=\sup_{j\in\N} \beta_j$. Choose $n^\ast$ such
that $\max\{\alpha_i,\beta_i\}< 1/(2/\alpha_{i^*}+2/\beta_{j^*})$
for all $i>n^\ast$. Let $(\kappa_i)_{i\in\N}$ be any feasible
solution to (\ref{LOb}) and suppose that $\kappa_i>0$ for some
$i>n^\ast$. Then
$\tilde\kappa_{i^*}:=\kappa_{i^*}+\kappa_i\alpha_i/\alpha_{i^*}$,
$\tilde\kappa_{j^*}:=\kappa_{j^*}+\kappa_i\beta_i/\beta_{j^*}$
(resp., $\tilde\kappa_{i^*}:=\kappa_{i^*}+\kappa_i\max\{\alpha_i/\alpha
_{i^*},\beta_i/\beta_{j^*}\}$ if $i^*=j^*$), $\tilde\kappa_i:=0$
and $\tilde\kappa_j:=\kappa_j$ for all $j\in\N\setminus\{
i^*,j^*,i\}$ defines a feasible solution to the constraints (\ref
{LOb}) with $\sum_{i=1}^\infty\tilde\kappa_i<\sum_{i=1}^\infty
\kappa_i$. Hence, all optimal solutions $(\kappa_i)_{i\in\N}$ to
(\ref{LOb}) satisfy $\kappa_i=0$ for all $i>n^\ast$. They can thus
be identified with optimal solutions to the finite-dimensional problem
(\ref{LObfinite}), and vice versa. It will sometimes be useful in the
following that by our definition of $n^\ast$ we have
%
\begin{equation}
\label{Eqsmallcoeffs}\max\{\alpha_i, \beta_i\} <
\frac{1}{2 (\sklfrac{1}{\alpha_{i^*}}+\sklfrac{1}{\beta_{j^*}}
)} \leq\frac{1}{2 \sum_{j=1}^\infty\kappa_j}\qquad\mbox{for all } i >
n^\ast
\end{equation}
for any optimal solution $(\kappa_j)_{j \in\mathbb{N}}$, because
$\bar\kappa_{i^*}:=1/\alpha_{i^*}$, $\bar\kappa_{j^*}:=1/\beta
_{j^*}$ and $\bar\kappa_j=0$ for all $j\in\mathbb{N} \setminus\{
i^*,j^*\}$ (if $i^*\ne j^*$) defines a feasible solution.

A unique solution to (\ref{LObfinite}) must be a basic feasible
solution [Sierksma \cite{Si96}, Theorems 1.2 and 1.5],
that is, $\kappa_l=0$ for all but at most two indices $l\in\{1,\ldots
,n^\ast\}$. Thus, for a unique solution we may assume that $\kappa
_1>0$ and $\kappa_i=0$ for all $i\ge3$ w.l.o.g.

\textbf{3. A helpful moment bound}

Let $m \in\mathbb{N}, \varepsilon\in(0,1)$ and $c_i \in[0,1-\varepsilon
), i\geq m$, such that $\sum_{i=m}^\infty c_i<\infty$. Then
%
\begin{eqnarray}\label{Eqhelpfulbound}
\nonumber
E \Biggl(\prod_{i=m}^\infty
X_i^{c_i} \Biggr)&\leq& \liminf_{n \to\infty}
\prod_{i=m}^n E \bigl(X_1^{c_i}
\bigr)
\\
&\le& \liminf_{n \to\infty} \prod
_{i=m}^n \bigl(E \bigl(X_1^{1-\varepsilon}
\bigr) \bigr)^{c_i/(1-\varepsilon)}
\\
\nonumber& = & \bigl(E\bigl(X_1^{1-\varepsilon}\bigr)
\bigr)^{\sum_{i=m}^\infty c_i/(1-\varepsilon)}<\infty
\end{eqnarray}
by Fatou's lemma and Jensen's inequality for independent and
identically distributed $X_i\geq0$ which are all regularly varying
with index $-1$.
[Note that this is the only instance in the proof where we use the
assumption that all $X_i$ are identically distributed, cf. Remark~\ref
{remconvconds}(iii).]\vadjust{\goodbreak}

\textbf{4. Proof of (i)}

We prove the assertion by formalizing the heuristic arguments used to
motivate the linear optimization problem. Let $(\kappa_i)_{i \in
\mathbb{N}}$ be an optimal (not necessarily unique) solution to (\ref
{LOb}) and note that by part 2 of this proof we have $\kappa_i=0$ for
$i>n^\ast$. By our assumptions, there exists a $\delta>0$ such that
$P(\prod_{i=n^\ast+1}^\infty X_i^{\alpha_i}>\delta, \prod_{i=n^\ast
+1}^\infty X_i^{\beta_i}>\delta)=:c>0$, cf. part 1 of
this proof. Fix $\varepsilon>0$. The lower bound (\ref{Eqlowerobound})
follows from
\begin{eqnarray*}
&& P(Y_0>x,Y_1>x)
\\
&&\quad \ge P \Biggl(\prod_{i=n^\ast+1}^\infty
X_i^{\alpha_i}>\delta, \prod_{i=n^\ast+1}^\infty
X_i^{\beta_i}>\delta\Biggr) P\bigl(X_i>\bigl(
\delta^{-1}x\bigr)^{\kappa_i}\mbox{ for all } 1 \leq i \leq
n^\ast\bigr)
\\
&&\quad \ge c \prod_{i=1}^{n^\ast} \bigl(
\delta^{-1} x\bigr)^{-\kappa_i-\eps
/n^{\ast}}= c \delta^{\sum_{i=1}^{n^\ast}\kappa_i+\eps}
x^{-\sum
_{i=1}^{n^\ast}\kappa_i-\eps}=c \delta^{\sum_{i=1}^\infty\kappa
_i+\eps} x^{-\sum_{i=1}^\infty\kappa_i-\eps}
\end{eqnarray*}
for sufficiently large $x$ because of the regular variation of the
independent random variables $X_i$.\vspace*{1pt}

To establish (\ref{Equpperobound}), write $Z:=\max\{\prod_{i=n^\ast
+1}^\infty X_i^{\alpha_i},\prod_{i=n^\ast+1}^\infty X_i^{\beta_i}\}
$ and fix $0<\varepsilon<\sum_{i=1}^\infty\kappa_i$. Note that
%
\begin{eqnarray}\label{EqIntuppbound}
\nonumber
&& P(Y_0>x,Y_1>x) x^{\sum_{i=1}^\infty\kappa_i-\eps}
\nonumber\\[-8pt]\\[-8pt]\nonumber
&&\quad \le \int P \Biggl(\prod_{i=1}^{n^\ast}
X_i^{\alpha_i}>\frac{x}z,\prod
_{i=1}^{n^\ast} X_i^{\beta_i}>
\frac{x}z \Biggr) \biggl(\frac{x}z \biggr)^{\sum_{i=1}^{n^\ast}\kappa
_i-\eps}
z^{\sum_{i=1}^{n^\ast}\kappa_i-\eps} P^Z(\mathrm{d}z).
\end{eqnarray}
Because $(\kappa_i)_{1 \leq i \leq n^\ast}$ is an optimal solution to
(\ref{LObfinite}) we have for all $y \geq1$
\begin{eqnarray*}
&& P \Biggl(\prod_{i=1}^{n^\ast}
X_i^{\alpha_i}>y,\prod_{i=1}^{n^\ast}
X_i^{\beta_i}>y \Biggr)
\\
&&\quad  \le P \Biggl(\sum
_{i=1}^{n^\ast} \alpha_i \frac{(\log X_i)^+}{\log y}
\ge1, \sum_{i=1}^{n^\ast} \beta_i
\frac
{(\log X_i)^+}{\log y}\ge1 \Biggr)
\\
&&\quad  \le P \Biggl(\sum_{i=1}^{n^\ast}
\frac{(\log X_i)^+}{\log y} \geq\sum_{i=1}^{n^\ast}
\kappa_i \Biggr)
\\
&&\quad = P \Biggl(\prod_{i=1}^{n^\ast}
\max(X_i,1)>y^{\sum_{i=1}^{n^\ast
}\kappa_i} \Biggr).
\end{eqnarray*}
This expression is a regularly varying function of $y$ with index
$-\sum_{i=1}^{n^\ast}\kappa_i$ by the corollary to Theorem 3 in
Embrechts and Goldie \cite{EmGo80}. Thus, the integrand in (\ref
{EqIntuppbound}) tends to 0 for all $z>0$ as $x \to\infty$.
Furthermore, we have
\[
E \bigl(Z^{\sum_{i=1}^{n^\ast} \kappa_i - \varepsilon} \bigr) \leq E \Biggl
(\prod
_{i=n^\ast+1}^\infty X_i^{\alpha_i(\sum
_{i=1}^{n^\ast}\kappa_i-\varepsilon)}
\Biggr)+E \Biggl(\prod_{i=n^\ast
+1}^\infty
X_i^{\beta_i(\sum_{i=1}^{n^\ast}\kappa_i-\varepsilon
)} \Biggr)<\infty%
\]
by (\ref{Eqsmallcoeffs}) and (\ref{Eqhelpfulbound}). Thus,
$Mz^{\sum_{i=1}^{n^\ast}\kappa_i-\eps}$ is an integrable majorant
to the integrand in (\ref{EqIntuppbound}) for sufficiently large
$M>0$. Hence, $P(Y_0>x,Y_1>x)=\mathrm{o}(x^{\eps-\sum_{i=1}^\infty\kappa
_i})$ follows by dominated convergence.

\textbf{5. Proof of (ii)}

In the following, we assume a unique solution to the optimization
problem (\ref{LOb}) with \mbox{$\kappa_i>0, \kappa_j>0$}. W.l.o.g. let us
assume that $i=1, j=2$, thus $\kappa_1>0, \kappa_2>0$ and $\kappa
_l=0$, $l \geq3$. Moreover, w.l.o.g. we may assume $\alpha_1>\alpha
_2$, $\beta_2>\beta_1$ and $\min\{\alpha_1,\beta_2\}\ge\max\{
\alpha_2,\beta_1\}$ (cf. the proof of Proposition~\ref{power-products}).

As in the proof of Proposition~\ref{power-products}, equation (\ref
{kappaproperty}) holds and it follows that
\[
\kappa_1=\frac{\beta_2-\alpha_2}{\alpha_1\beta_2-\alpha_2\beta
_1},\qquad \kappa_2=
\frac{\alpha_1-\beta_1}{\alpha_1\beta
_2-\alpha_2\beta_1}.
\]

There exists a so-called dual problem to (\ref{LObfinite}) [see
Sierksma \cite{Si96}, Chapter~2] that is given by
%
\begin{eqnarray}
\label{DLO}
&&\hat{\kappa}_1+\hat{\kappa}_2 \to\max!
\nonumber\\[-8pt]\\[-8pt]\nonumber
&&\quad \mbox{under the constraints } \alpha_l \hat{
\kappa}_1 + \beta_l \hat{\kappa}_2\leq1
\mbox{ for all } l=1, \dots, n^\ast, \hat{\kappa}_1 \geq0,
\hat{\kappa}_2\geq0.
\end{eqnarray}
Since we have assumed an optimal solution to the primal problem (\ref
{LObfinite}), there also exists an optimal solution to the dual problem
[Sierksma \cite{Si96}, Theorem 2.2], denoted by $(\hat
{\kappa}_1,\hat{\kappa}_2)$ with
%
\begin{equation}
\label{Eqsameobjective} \hat{\kappa}_1+\hat{\kappa}_2=
\kappa_1+\kappa_2.
\end{equation}
Because of $\kappa_1, \kappa_2>0$, it follows by the complementary
slackness theorem [Sierksma \cite{Si96}, Theorem 2.4] that
\[
\alpha_1\hat{\kappa}_1 + \beta_1\hat{
\kappa}_2=1\quad\mbox{and}\quad \alpha_2\hat{\kappa}_1
+ \beta_2\hat{\kappa}_2=1.
\]
Therefore,
%
\begin{equation}
\label{eqkappahat} \hat{\kappa}_1=\frac{\beta_2-\beta_1}{\alpha_1\beta
_2-\alpha
_2\beta_1}>0,\qquad \hat{
\kappa}_2=\frac{\alpha_1-\alpha
_2}{\alpha_1\beta_2-\alpha_2\beta_1}>0.
\end{equation}
If equality held in the constraints of (\ref{DLO}) for some $l \in\{
3, \ldots, n^\ast\}$, then direct calculations show that
\[
\tilde\kappa_1=\frac{\beta_l-\alpha_l}{\alpha_1\beta_l-\alpha
_l\beta_1}, \qquad\tilde
\kappa_l=\frac{\alpha_1-\beta_1}{\alpha
_1\beta_l-\alpha_l\beta_1}, \qquad\tilde\kappa_i=0,
\forall i\in\bigl\{2,\ldots,n^\ast\bigr\}\setminus\{l\}
\]
would be another optimal solution to the primal problem in
contradiction to our assumptions.
Therefore, there exists an $\varepsilon>0$ such that
%
\begin{equation}
\label{strictlylessthanone} \alpha_l \hat{\kappa}_1 +
\beta_l \hat{\kappa}_2<1-2\varepsilon\qquad\mbox{for all }l \in
\bigl\{3, \ldots,n^\ast\bigr\}.
\end{equation}
Let
\begin{eqnarray*}
Z_1(x_3, x_4, \dots) &:=& \prod
_{i=3}^\infty x_i^{\vafrac{\alpha_i\beta
_2-\beta_i\alpha_2}{\alpha_1\beta_2-\alpha_2\beta_1}},
\\
Z_2(x_3, x_4, \dots)&:=&\prod
_{i=3}^\infty x_i^{\vafrac{\beta_i\alpha
_1-\alpha_i\beta_1}{\alpha_1\beta_2-\alpha_2\beta_1}}
\end{eqnarray*}
if $x_i>0$ for all $i\geq3$ with $\max\{\alpha_i,\beta_i\}>0$ and
set $Z_1(x_3, x_4, \dots)=$ $Z_2(x_3, x_4, \dots)=\infty$ else.
These two quantities will play the role of $z_1$ and $z_2$ in the
statement and proof of Proposition~\ref{power-products}. For $\delta
>0$, define $M=M(\delta)$ as in Proposition~\ref{power-products},
\[
\mathbb{M}=\mathbb{M}(x,M):= \bigl\{ \mathbf{x} \in[0, \infty
)^\mathbb{N}\dvtx  M Z_1(\mathbf{x})<x^{\kappa_1}, M
Z_2(\mathbf{x})<x^{\kappa_2} \bigr\} %
\]
and $\mathbf{X}_3^\infty:=(X_3, X_4, \dots)$.

We start with showing (ii) for $s_0=s_1=1$.
%
\begin{eqnarray} \label{non-degenerate}
\nonumber
\hspace*{-30pt}&& \frac{P (\prod_{i=1}^\infty X_i^{\alpha
_i}>x, \prod_{i=1}^\infty X_i^{\beta_i}>x )}{P(X_1>x^{\kappa
_1})P(X_2>x^{\kappa_2})}
\\
\hspace*{-30pt}&&\quad =  \int_{\mathbb{M}} \frac{P ((Z_1(\mathbf{x})X_1)^{\alpha
_1}(Z_2(\mathbf
{x})X_2)^{\alpha_2}>x, (Z_1(\mathbf{x})X_1)^{\beta_1}(Z_2(\mathbf
{x})X_2)^{\beta_2}>x )}{P(X_1>x^{\kappa_1})
P(X_2>x^{\kappa_2})}
P^{\mathbf{X}_3^\infty}(\mathrm{d}\mathbf{x})
\\
\hspace*{-30pt}&&\qquad{} + \frac{P (\prod_{i=1}^\infty X_i^{\alpha_i}>x, \prod_{i=1}^\infty
X_i^{\beta_i}>x, \mathbf{X}_3^\infty\in\mathbb{M}^c
)}{P(X_1>x^{\kappa_1})P(X_2>x^{\kappa_2})}.\nonumber
\end{eqnarray}

According to Proposition~\ref{power-products} and (\ref
{eqkappahat}), the integrand of the first summand on the right-hand
side of (\ref{non-degenerate}) is
bounded from above by
\[
(1+\delta)C Z_1(\mathbf{x})^{1\pm\delta}Z_2(
\mathbf{x})^{1\pm
\delta} \leq(1+\delta) C \prod_{i=3}^\infty
x_i^{(1\pm\delta
)(\alpha_i\hat\kappa_1+\beta_i\hat\kappa_2)}
\]
for $x>M$ with $C$ as in Proposition~\ref{power-products}. Thus, if we
chose $\delta$ small enough then all the exponents in the last
expression are less than $1-\varepsilon$ (by (\ref{strictlylessthanone})
for $3 \leq i \leq n^\ast$ and by (\ref{Eqsmallcoeffs}) and (\ref
{Eqsameobjective}) for $i>n^\ast$) and the upper bound is integrable
w.r.t. $P^{\mathbf{X}_3^\infty}$ by (\ref{Eqhelpfulbound}).
Therefore, by dominated convergence, we see that
%
\begin{eqnarray}\label{finalconstant}
\nonumber
\hspace*{-10pt}&& \lim_{x \to\infty}\int_{\mathbb{M}}
\frac{P ((Z_1(\mathbf{x})X_1)^{\alpha_1}(Z_2(\mathbf
{x})X_2)^{\alpha_2}>x, (Z_1(\mathbf{x})X_1)^{\beta_1}(Z_2(\mathbf
{x})X_2)^{\beta_2}>x )}{P(X_1>x^{\kappa_1})
P(X_2>x^{\kappa_2})} P^{\mathbf{X}_3^\infty}(\mathrm{d}\mathbf{x})
\\
\nonumber
\hspace*{-10pt}&&\quad =\int_{\mathbb{M}} \lim_{x \to\infty}
\frac{P ((Z_1(\mathbf{x})X_1)^{\alpha_1}(Z_2(\mathbf
{x})X_2)^{\alpha_2}>x, (Z_1(\mathbf{x})X_1)^{\beta_1}(Z_2(\mathbf
{x})X_2)^{\beta_2}>x )}{P(X_1>x^{\kappa_1})
P(X_2>x^{\kappa_2})} P^{\mathbf{X}_3^\infty}(\mathrm{d}\mathbf{x})
\nonumber\\[-8pt] \hspace*{-10pt}\\[-8pt]\nonumber
\hspace*{-10pt}&&\quad = \int\prod_{i=3}^\infty
C Z_1(\mathbf{x})Z_2(\mathbf{x})\mathds
{1}_{\{\max\{Z_1(\mathbf{x}),Z_2(\mathbf{x})\}<\infty\}} P^{\mathbf
{X}_3^\infty}(\mathrm{d}\mathbf{x})
\\
\nonumber \hspace*{-10pt}&&\quad  =  C E \Biggl(\prod_{i=3}^\infty
X_i^{\alpha
_i\hat{\kappa}_1+\beta_i\hat{\kappa}_2} \Biggr) = D \in(0, \infty),
\end{eqnarray}
where in the last step we have used that $\hat{\kappa}_1, \hat
{\kappa}_2>0$ and thus the product vanishes if $Z_1(\mathbf{x})$ or
$Z_2(\mathbf{x})$ is infinite.

It remains to be shown that
%
\begin{eqnarray}\label{eqremainder}
\nonumber
&& \frac{P (\prod_{i=1}^\infty X_i^{\alpha
_i}>x, \prod_{i=1}^\infty X_i^{\beta_i}>x, \mathbf{X}_3^\infty\in
\mathbb{M}^c )}{P(X_1>x^{\kappa_1})P(X_2>x^{\kappa_2})}
\\
&&\quad \le \frac{P (\prod_{i=1}^\infty X_i^{\alpha
_i}>x, \prod_{i=1}^\infty X_i^{\beta_i}>x, M Z_1(\mathbf{X}_3^\infty
)\ge x^{\kappa_1} )}{P(X_1>x^{\kappa_1})P(X_2>x^{\kappa_2})}
\\
\nonumber&&\qquad{} + \frac{P (\prod_{i=1}^\infty X_i^{\alpha_i}>x, \prod_{i=1}^\infty
X_i^{\beta_i}>x, MZ_2(\mathbf{X}_3^\infty)\ge
x^{\kappa_2} )}{P(X_1>x^{\kappa_1})P(X_2>x^{\kappa_2})}
\end{eqnarray}
tends to 0. We will only examine the first term on the right-hand side,
as the second term can be treated analogously.

The numerator of the first term can be bounded by
%
\begin{eqnarray}
\nonumber
&& P \Biggl(\prod_{i=1}^\infty
X_i^{\beta_i}>x, M^{\alpha_1\beta_2-\alpha_2\beta_1} \prod
_{i=3}^\infty X_i^{\alpha
_i\beta_2-\beta_i\alpha_2}\ge
x^{\beta_2-\alpha_2} \Biggr)
\\
&&\quad\leq P \Biggl(\prod_{i=1}^\infty
X_i^{\beta_i}>x, M^{\alpha_1\beta
_2-\alpha_2\beta_1} \prod
_{i=1}^\infty X_i^{\beta_i\alpha_2}\prod
_{i=3}^\infty X_i^{\alpha_i\beta_2-\beta_i\alpha_2}
\ge x^{\beta
_2} \Biggr) \label{smallerthankappa}
\nonumber
\\[-8pt]
\\[-8pt]
\nonumber
&&\quad\leq P \Biggl(\prod_{i=1}^\infty
X_i^{\beta_i}>\min\bigl\{ 1,M^{\vfrac{\alpha_2\beta_1-\alpha_1\beta
_2}{\beta_2}}\bigr\}x,\\
&&\qquad
X_1^{\alpha_2\beta_1/\beta_2}\prod_{i=2}^\infty
X_i^{\alpha_i}\ge\min\bigl\{1,M^{\vfrac{\alpha_2\beta_1-\alpha_1\beta
_2}{\beta_2}}\bigr\} x
\Biggr).\nonumber
\end{eqnarray}
By assertion (i), the asymptotic behavior of the above probability is
related to the linear program
%
\begin{eqnarray}
\label{LPsmalleralpha}
&&\sum_{i=1}^\infty
\tilde{\kappa}_i \to\min!
\nonumber\\[-8pt]\\[-8pt]\nonumber
&&\quad \mbox{under the constraints } (\alpha_2\beta_1/
\beta_2)\tilde{\kappa}_1+\sum
_{i=2}^\infty\alpha_i \tilde{
\kappa_i}\geq1, \sum_{i=1}^\infty
\beta_i\tilde{\kappa}_i\geq1, \tilde{
\kappa}_i \geq0, \forall i \in\mathbb{N}.
\end{eqnarray}
Since $\alpha_2\beta_1/\beta_2<\alpha_1$, all optimal solutions
$(\tilde\kappa_i)_{i \in\mathbb{N}}$ to (\ref{LPsmalleralpha}) are
also feasible solutions to (\ref{LOb}), but they cannot be optimal
solutions to (\ref{LOb}) because the unique optimal solution $(\kappa
_i)_{i \in\mathbb{N}}$ is not a feasible solution of (\ref
{LPsmalleralpha}). Hence, $\sum_{i=1}^\infty\tilde{\kappa}_i>\kappa
_1+\kappa_2+2\delta$ for sufficiently small $\delta>0$ and, by
assertion (i), the right-hand side of (\ref{smallerthankappa}) is of
smaller order than $x^{-(\kappa_1+\kappa_2+\delta)}$, while the
denominator of the first term of the right-hand side of (\ref
{eqremainder}) is of larger order than $x^{-(\kappa_1+\kappa
_2+\delta)}$. Thus, the first summand in (\ref{eqremainder}) tends
to 0. The second summand can be treated analogously, which concludes
the proof in the case $s_0=s_1=1$.

Finally, we prove the assertion (ii) for general $s_0,s_1>0$.
Note that the above value of $D$ in~(\ref{finalconstant}) does not
depend on the distribution of $X_1$ or $X_2$.
Therefore, we may replace $X_1$ and $X_2$ with $z_1X_1$ and $z_2X_2$,
respectively, where
%
\begin{eqnarray}
\label{xfroms} z_1&:=&s_1^{\alpha_2/(\alpha_1\beta_2-\alpha_2\beta
_1)}s_0^{-\beta
_2/(\alpha_1\beta_2-\alpha_2\beta_1)},
\nonumber
\\[-8pt]
\\[-8pt]
\nonumber
z_2&:=&s_0^{\beta
_1/(\alpha_1\beta_2-\alpha_2\beta_1)}s_1^{-\alpha_1/(\alpha
_1\beta_2-\alpha_2\beta_1)},
\end{eqnarray}
such that $z_1^{\alpha_1}z_2^{\alpha_2}=s_0^{-1}$ and $z_1^{\beta
_1}z_2^{\beta_2}=s_1^{-1}$. As this substitution does not alter the
solution to the linear program (\ref{LOb}),
\begin{eqnarray*}
&& \frac{P(Y_0>s_0x, Y_1>s_1x)}{P(X_1>x^{\kappa
_1})P(X_2>x^{\kappa_2})}
\\
&&\quad = \frac{P ((z_1X_1)^{\alpha_1}(z_2X_2)^{\alpha_2}
\prod_{i=3}^\infty X_i^{\alpha_i}>x,(z_1X_1)^{\beta
_1}(z_2X_2)^{\beta_2}\prod_{i=3}^\infty X_i^{\beta_i}>x
)}{P(z_1X_1>x^{\kappa_1})P(z_2X_2>x^{\kappa_2})}
\\
&&\qquad{} \times\frac{P(z_1X_1>x^{\kappa_1})P(z_2X_2>x^{\kappa
_2})}{P(X_1>x^{\kappa_1})P(X_2>x^{\kappa_2})}
\end{eqnarray*}
converges to
\[
D z_1z_2 = D s_0^{(\beta_1-\beta_2)/(\alpha_1\beta_2-\alpha_2\beta_1)}
s_1^{(\alpha_2-\alpha_1)/(\alpha_1\beta_2-\alpha_2\beta_1)}
\]
as $x\to\infty$ by the first part of the proof and the regular
variation of $X_1$ and $X_2$.

\textbf{6. Proof of (iii)}

In the following, we assume that a unique solution $(\kappa_i)_{i \in
\mathbb{N}}$ to (\ref{LOb}) exists with $\kappa_1>0$ and $\kappa
_j=0$ for all $j\geq2$. Furthermore, assume w.l.o.g. that $\alpha
_1\geq\beta_1>0$, thus $\kappa_1=1/\beta_1$. In this case,
%
\begin{eqnarray}\label{degenerate-min}
\nonumber
&& P(Y_0>s_0x, Y_1>s_1x)
\\
\nonumber
&&\quad =P \Biggl(X_1^{\alpha_1}\prod
_{i=2}^\infty X_i^{\alpha
_i}>s_0x,
X_1^{\beta_1}\prod_{i=2}^\infty
X_i^{\beta_i}>s_1x \Biggr)
\nonumber\\[-8pt]\\[-8pt]\nonumber
&&\quad =P \Biggl(X_1^{\beta_1} \Biggl(s_0^{-1}
\prod_{i=2}^\infty X_i^{\alpha_i}
\Biggr)^{\beta_1/\alpha_1}x^{1-\beta_1/\alpha_1}>x, X_1^{\beta_1}s_1^{-1}
\prod_{i=2}^\infty X_i^{\beta_i}>x
\Biggr)
\\
\nonumber &&\quad =P \Biggl(X_1^{\beta_1}\min\Biggl\{
\Biggl(s_0^{-1}\prod_{i=2}^\infty
X_i^{\alpha_i} \Biggr)^{\beta_1/\alpha
_1}x^{1-\beta_1/\alpha_1},
s_1^{-1}\prod_{i=2}^\infty
X_i^{\beta
_i} \Biggr\}>x \Biggr).
\end{eqnarray}
Now, if $\alpha_1=\beta_1$, then the second factor in the product in
(\ref{degenerate-min}) equals $Z:=\min\{
s_0^{-1}\prod_{i=2}^\infty X_i^{\alpha_i}, s_1^{-1} \prod_{i=2}^\infty
X_i^{\beta_i} \}$. Note that
\[
\sum_{i=2}^\infty\kappa_i'>
\kappa_1=\frac{1}{\beta_1}\qquad\mbox{for all } \kappa_i'
\geq0, i \geq2,\qquad \mbox{such that } \sum_{i=2}^\infty
\alpha_i\kappa_i'\geq1,\qquad \sum
_{i=2}^\infty\beta_i\kappa_i'
\geq1 %
\]
since we have assumed a unique solution to (\ref{LOb}). Part (i) of
the statement (applied to $\tilde Y_0:=\prod_{i=2}^\infty X_i^{\alpha
_i}$ and $\tilde Y_1:=\prod_{i=2}^\infty X_i^{\beta_i}$) thus yields
that $P(Z>x)=\mathrm{o}(x^{-1/\beta_1-\varepsilon})$ for some $\varepsilon>0$,
which by Breiman's lemma in turn implies the first case in (\ref
{mproductsdegenerate}).

If $\alpha_1>\beta_1$, then $\beta_i<\beta_1$ for all $i\geq2$,
since otherwise $\kappa_1=1/\beta_1, \kappa_i=0, i\geq2$, would not
be a unique solution to (\ref{LOb}). (If $\beta_i\ge\beta_1$, then
$\kappa_1'=(1-\eps)/\beta_1$, $\kappa_i'=\eps/\beta_i$, for
sufficiently small~$\eps$, and $\kappa_j'=0$ for all $j\in\mathbb
{N} \setminus\{1,i\}$ would satisfy (\ref{LOb}) with $\kappa
_1'+\kappa_i'\le\kappa_1$.) For all $C>0$, the second factor in the
product in (\ref{degenerate-min}) is eventually (for sufficiently
large $x$) bounded from below by $\min\{C \prod_{i=2}^\infty
X_i^{\alpha_i\beta_1/\alpha_1}, s_1^{-1} \prod_{i=2}^\infty
X_i^{\beta_i} \}$. Thus,
\begin{eqnarray*}
&& P \Biggl(X_1^{\beta_1}\min\Biggl\{C\prod
_{i=2}^\infty X_i^{\alpha_i\beta_1/\alpha_1},
s_1^{-1}\prod_{i=2}^\infty
X_i^{\beta_i} \Biggr\}>x \Biggr)
\\
&&\quad \leq P(Y_0>s_0x,Y_1>s_1x)
\leq P \Biggl(X_1^{\beta_1}\mathds{1}_{\{
\prod_{i=2}^\infty X_i^{\alpha_i\beta_1/\alpha_1}>0\}}s_1^{-1}
\prod_{i=2}^\infty X_i^{\beta_i}>x
\Biggr)
\end{eqnarray*}
for $x$ large enough.
For both the left-hand side and the right-hand side of this inequality,
the random variable $X_1^{\beta_1}$ is multiplied with an independent
random variable bounded above by $s_1^{-1}\prod_{i=2}^\infty
X_i^{\beta_i}$. Because $\beta_i<\beta_1$ for all $i \geq2$ we have
$E ( (\prod_{i=2}^\infty X_i^{\beta_i} )^{(1+\varepsilon
)/\beta_1} )<\infty$ for sufficiently small $\varepsilon>0$ by
(\ref{Eqhelpfulbound}). We may thus again apply Breiman's lemma to get
\begin{eqnarray*}
&& E \Biggl( \Biggl(\min\Biggl\{C\prod_{i=2}^\infty
X_i^{\alpha
_i\beta_1/\alpha_1},s_1^{-1} \prod
_{i=2}^\infty X_i^{\beta_i} \Biggr\}
\Biggr)^{1/\beta_1} \Biggr)
\\
&&\quad \leq \liminf_{x \to\infty} \frac
{P(Y_1>s_0x,Y_1>s_1x)}{P(X_1^{\beta_1}>x)}\leq\limsup
_{x \to\infty
} \frac{P(Y_1>s_0x,Y_1>s_1x)}{P(X_1^{\beta_1}>x)}
\\
&&\quad\leq E \Biggl( \Biggl(\mathds{1}_{\{\prod_{i=2}^\infty X_i^{\alpha
_i\beta_1/\alpha_1}>0\}}s_1^{-1}
\prod_{i=2}^\infty X_i^{\beta
_i}
\Biggr)^{1/\beta_1} \Biggr)<\infty.
\end{eqnarray*}
For $C \to\infty$, the lower bound converges to the upper bound,
which yields the second case in (\ref{mproductsdegenerate}). Analogous
arguments with the role of $\alpha_1$ and $\beta_1$ interchanged
conclude the proof of part~(iii) of the statement.
\end{pf*}

\begin{pf*}{Proof of Theorem \ref{power-products-high-dimension}}
By assumption, $X_j\ge\delta$ for some $\delta\in(0,1]$ and all
$1\le j\le n$, and thus $\alpha_{ij}\log X_j\le\alpha_{ij}(\log
X_j)^+-\alpha_{ij}^-\log\delta$. Let $c:=\max_{1\le i\le d}\sum_{j=1}^n
\alpha_{ij}^-\llvert \log\delta\rrvert $. Then, similarly as in the proof
of part (i) of Theorem~\ref{power-products-infinite}, the optimality
of $(\kappa_j)_{1\le j\le n}$ implies
\begin{eqnarray*}
P \Biggl(\prod_{j=1}^n
X_j^{\alpha_{ij}}>y, \forall1\le i\le d \Biggr)& \le& P \Biggl(
\sum_{j=1}^n \alpha_{ij}
\frac{(\log
X_j)^+}{\log y-c}\ge1, \forall1\le i\le d \Biggr)
\\
&=& P \Biggl(\prod_{j=1}^n
\max(X_j,1)\ge\bigl(y{\rm{e}}^{-c}\bigr)^{\sum
_{j=1}^n\kappa_j}
\Biggr)
\\
&=& \mathrm{o} \bigl(y^{\eps-\sum_{j=1}^n \kappa_j} \bigr)
\end{eqnarray*}
for all $\eps>0$ and $y>e^c$. The lower bound can be established in a similar way
as in the proof of Theorem~\ref{power-products-infinite}.

To\vspace*{1pt} prove the second assertion, first note that if a unique optimal
solution of the given form exists, the equation $\mathbf{A}\mathbf
{x}=\mathbf{1}$ must have $(\kappa_j)_{j\in J}=\mathbf{A}^{-1}
\mathbf{1}$ as the unique solution. Let $\tilde{X}_j:=X_j\prod_{i=1}^d
s_i^{-A_{ji}^{-1}}$ for $j \in J$, $Z_i:=\prod_{k\notin
J}X_k^{\alpha_{ik}}$ for $1\le i\le d$ and $\mathbb{M}:= \{
\mathbf{z}\in[0,\infty)^d\mid\prod_{i=1}^d z_i^{A_{ji}^{-1}} \le M
x^{\kappa_j} ~\forall j\in J \}$ for a sufficiently large $M$ (to
be specified later). Then
%
\begin{eqnarray}\label{eqhighdimsplit}
&& P(Y_i>s_i x ~\forall1\le i\le d)
\nonumber
\\
&&\quad  =  \int_{\mathbb{M}} P \biggl( \prod
_{j\in J} \tilde{X}_j^{\alpha
_{ij}}>x/z_i ~\forall1\le i\le d \biggr) P^{(Z_i)_{1\le i\le
d}}(\mathrm{d}\mathbf{z})
\\
\nonumber &&\qquad{} + P \bigl(Y_i>s_ix ~\forall1\le i\le d,
(Z_i)_{1\le i\le d}\notin\mathbb{M} \bigr).
\end{eqnarray}
To show that the last summand is negligible, it suffices to prove that
%
\begin{equation}
\label{eqhighdimnegligible} P \Biggl(Y_i>s_ix ~\forall1\le i\le d,
\prod_{i=1}^d Z_i^{A_{li}^{-1}}
> M x^{\kappa_l} \Biggr)= \mathrm{o}\bigl(x^{-\sum_{j=1}^n \kappa
_j-\delta}\bigr)
\end{equation}
as $x \to\infty$ for some $\delta>0$ and all $l \in J$.
Now,
\begin{eqnarray*}
&& P \Biggl(Y_i>s_ix ~\forall1\le i\le d, \prod
_{i=1}^d Z_i^{A_{li}^{-1}} >
M x^{\kappa_l} \Biggr)
\\
&&\quad  =  P \Biggl(\prod_{j=1}^n
X_j^{\alpha_{ij}}>s_ix ~\forall1\le i\le d, \prod
_{k\notin J} X_k^{\sum_{i=1}^d \alpha_{ik}
A_{li}^{-1}/\kappa_l}>M^{1/\kappa_l}
x \Biggr).
\end{eqnarray*}
Obviously, a feasible solution to the corresponding linear program
%
\begin{eqnarray}
\label{Eq45LP}
&& \sum_{j=1}^n\rho_j \to\min!\nonumber
\\
&&\quad\mbox{under the constraints }
\sum_{j=1}^n \alpha_{ij}
\rho_j \ge1, \forall1\le i \le d,
\\
&&\phantom{\quad\mbox{under the constraints }} \sum
_{k\notin J} \sum_{i=1}^d
\frac{\alpha_{ik} A_{li}^{-1}}{\kappa
_l} \rho_k\ge1, \rho_j\ge0,
\forall1\le j\le n,\nonumber
\end{eqnarray}
%
is also a feasible solution of the original linear program. However, as
the unique solution $(\kappa_j)_{1\le j\le n}$ to the original linear
program does not satisfy the second constraint in (\ref{Eq45LP}), an
optimal solution $(\rho_j)_{1\le j\le n}$ to the above linear program
fulfills $\sum_{j=1}^n \rho_j>\sum_{j=1}^n \kappa_j$. Therefore,
the first assertion (applied to the products $Y_1,\ldots,Y_d$ and
$\prod_{i=1}^d Z_i^{A_{li}^{-1}/\kappa_l}$) implies (\ref
{eqhighdimnegligible}).

The distribution of the random vector $(\tilde{X}_j)_{j\in J}$ is
regularly varying on $\mathbb{E}^d$ in the sense of Lindskog, Resnick
and Roy \cite{LiReRo13}, Definition 3.2, w.r.t. the
``multiplication'' $(x, (t_j)_{j\in J})\mapsto(x^{\kappa_j}t_j)_{j\in
J}$, because
%
\begin{equation}
\label{eqgenregvar} \frac{P((\tilde{X}_j)_{j\in J}\in\bigtimes_{j\in J}
(x^{\kappa
_j}t_j,\infty))}{P((\tilde{X}_j)_{j\in J}\in\bigtimes_{j\in J}
(x^{\kappa_j},\infty))} \to\prod_{j\in J}t_j^{-1}=:
\nu\biggl(\bigtimesd_{j\in J} (t_j,\infty) \biggr)
\end{equation}
for all $t_j>0$, $j\in J$
(cf. Example 3.1 of that paper).

By our assumptions, $\mathbf{A}^{-1}(\mathbb{E}^d)\subset\mathbb
{E}^d$, so that
%
\begin{equation}
\label{Eqsubset} \exp\Biggl(\mathbf{A}^{-1} \Biggl(\bigtimesd_{i=1}^d
(-\log z_i,\infty) \Biggr) \Biggr) \subset\exp\bigl(\bigl(-
\mathbf{A}^{-1}(\log z_i)_{1 \leq i \leq d},\bolds{\infty
}\bigr) \bigr)
\end{equation}
which is bounded away from the boundary of $\mathbb{E}^d$ for fixed
$(z_i)_{1 \leq i \leq d}$.
Hence, for the integrand of the first summand in (\ref
{eqhighdimsplit}) we obtain
%
\begin{eqnarray}
\label{Eqhigdimint}
&& \frac{P ( \prod_{j\in J} \tilde
{X}_j^{\alpha_{ij}}>x/z_i ~\forall 1\le i\le d )}{\prod_{j\in
J}P(\tilde{X}_j>x^{\kappa_j})}\nonumber
\\
\nonumber
&&\quad  =  \frac{P ( \mathbf{A}(\log\tilde{X}_j)_{j\in
J}\in\bigtimes_{i=1}^d (\log(x/z_i),\infty) )}{P ((\tilde
{X}_j)_{j\in J}\in\bigtimes_{j \in J}(x^{\kappa_j},\infty) )}
\nonumber\\[-8pt]\\[-8pt]\nonumber
&&\quad  =  \frac{P ( (\tilde{X}_j)_{j\in J}\in(x^{\kappa
_j})_{j\in J}\exp(\mathbf{A}^{-1} (\bigtimes_{i=1}^d (-\log
z_i,\infty) ) ) )}{P ((\tilde{X}_j)_{j\in J}\in
\bigtimes_{j \in J}(x^{\kappa_j},\infty) )}
\\
\nonumber
&&\quad  \to \nu\Biggl(\exp\Biggl(\mathbf{A}^{-1} \Biggl(\bigtimesd
_{i=1}^d (-\log z_i,\infty) \Biggr) \Biggr)
\Biggr),
\end{eqnarray}
where by convention $(x^{\kappa_j})_{j\in J}B=\{(x^{\kappa_j}y_j)_{j
\in J} \mid(y_j)_{j \in J} \in B\}$ for $B \subset\mathbb{E}^d$ and the
measure $\nu$ is defined by (\ref{eqgenregvar}). If we show that
there exists an integrable majorant to (\ref{Eqhigdimint}), then by
(\ref{eqhighdimsplit}), (\ref{eqhighdimnegligible}) and dominated
convergence
\[
\frac{P(Y_i>s_ix ~\forall 1\le i\le d)}{\prod_{j\in J}P(\tilde
{X}_j>x^{\kappa_j})} \to\int\nu\Biggl(\exp\Biggl(\mathbf{A}^{-1}
\Biggl(\bigtimesd_{i=1}^d (-\log z_i,\infty)
\Biggr) \Biggr) \Biggr) P^{(Z_i)_{1\le i\le d}}(\mathrm{d}\mathbf{z}). %
\]
Direct calculations show that this limit equals
\[
\frac{1}{\llvert \det\mathbf{A}\rrvert }E \Biggl(\prod_{i=1}^d
\frac{Z_i^{\sum
_{j \in J}A_{ji}^{-1}}}{\sum_{j \in J}A_{ji}^{-1}} \Biggr)=D %
\]
and thus (\ref{asymphighdimension}) follows with
\[
\lim_{x \to\infty} \frac{\prod_{j\in J}P(\tilde{X}_j>x^{\kappa
_j})}{\prod_{j\in J}P(X_j>x^{\kappa_j})}=\prod
_{i=1}^ds_i^{-\sum_{j
\in J}A_{ji}^{-1}}.
\]
It remains to define an integrable majorant to (\ref{Eqhigdimint}).
As shown above [cf. (\ref{Eqsubset})],
\begin{eqnarray*}
&& \frac{P ( \prod_{j\in J} \tilde{X}_j^{\alpha
_{ij}}>x/z_i ~\forall 1\le i\le d )}{\prod_{j\in J}P(\tilde
{X}_j>x^{\kappa_j})}
\\
&&\quad = \frac{P ( (\tilde{X}_j)_{j\in J}\in\exp(\mathbf
{A}^{-1} (\bigtimes_{i=1}^d (\log(x/z_i),\infty) )
) )}{\prod_{j\in J}P(\tilde{X}_j>x^{\kappa_j})}
\\
&&\quad \leq \frac{P ( (\tilde{X}_j)_{j\in J}\in(x^{\kappa_j})_{j
\in J}\exp( (\mathbf{A}^{-1}(-\log z_i)_{1 \leq i \leq
d},\bolds{\infty} ) ) )}{\prod_{j\in
J}P(\tilde{X}_j>x^{\kappa_j})}.
\end{eqnarray*}
Hence, for $\varepsilon>0$ the Potter bounds yield
\begin{eqnarray*}
\frac{P ( \prod_{j\in J} \tilde{X}_j^{\alpha_{ij}}>x/z_i~\forall 1\le i\le d )}{\prod_{j\in J}P(\tilde{X}_j>x^{\kappa_j})} &\le
& \prod_{j\in J}
\frac{P(\tilde{X}_j>x^{\kappa_j} \prod_{i=1}^d z_i^{-
A_{ji}^{-1}})}{P(\tilde{X}_j>x^{\kappa_j})}
\\
& \le& (1+\varepsilon) \Biggl(\prod_{j\in J}\prod
_{i=1}^d z_i^{A_{ji}^{-1}}
\Biggr)^{1\pm\eps}
\end{eqnarray*}
for $\mathbf{z}\in\mathbb{M}$ if $M$ is chosen sufficiently large. Since
\[
\prod_{j\in J}\prod_{i=1}^d
Z_i^{A_{ji}^{-1}} = \prod_{k\notin J}
X_k^{\sum_{j\in J} \sum_{i=1}^d \alpha_{ik} A^{-1}_{ji}}, %
\]
and the $\tilde{X}_k$ are independent and regularly varying with index
$-1$, it suffices to show that $\sum_{j\in J} \sum_{i=1}^d \alpha
_{ik} A^{-1}_{ji}<1$ for all $k\notin J$. As in the proof of Theorem
\ref{power-products-infinite}, the strong complementary slackness
theorem implies that the corresponding dual problem
\begin{eqnarray*}
&& \sum_{i=1}^d\hat
\kappa_i \to\max!
\\
&&\quad \mbox{under the constraints }\sum_{i=1}^d
\alpha_{ij}\hat\kappa_i\le1, \forall1\le j\le n,
\hat\kappa_i\ge0, \forall1\le i\le d,
\end{eqnarray*}
has a solution that satisfies
$\sum_{i=1}^d \alpha_{ij}\hat\kappa_i=1$ for all $j\in J$, that is,
$\hat\kappa_i=\sum_{j\in J}A_{ji}^{-1}$ for all $1\le i\le d$, and
$\sum_{j\in J} \sum_{i=1}^d \alpha_{ik}A^{-1}_{ji}=\sum_{i=1}^d
\alpha_{ik}\hat\kappa_i<1$ for all $k\notin J$, which concludes the proof.
\end{pf*}
%
\subsection{Proofs to Section~\texorpdfstring{\protect\ref{sectasymptoticsSV}}{5}}

\begin{pf*}{Proof of Theorem \ref{limitmeasuressvmodels}}
Note that if a feasible solution $(\kappa_i)_{i\in\N_0}$ of (\ref
{svmin}) satisfies $\kappa_i>0$ for exactly one index $i\in\N_0$ and
$\alpha_i>\alpha_{i-h}$, then $\tilde\kappa_i:=1/\alpha_i$,
$\tilde\kappa_{i+h}:=(\alpha_i-\alpha_{i-h})/\alpha_i^2$ and
$\tilde\kappa_j:=0$ for all other $j\in\N_0$ defines a feasible
solution, too, with $\tilde\kappa_i+\tilde\kappa_{i+h}<1/\alpha
_{i-h}\le\kappa_i$. Hence, the first feasible solution cannot be
optimal, which (together with an analogous argument in the case $\alpha
_i<\alpha_{i-h}$) proves that for an optimal solution necessarily
$\alpha_i=\alpha_{i-h}$ (and thus $i\ge h$), if $i\in\N_0$ is the
only index with $\kappa_i>0$.

Define coefficients $\hat\alpha_i:=\alpha_{i-h}$ and $\hat\beta
_i:=\alpha_i$ (with the convention $\alpha_k=0$ for $k<0$) and let
$\hat X_i:= {\rm{e}}^{\eta_{h-i}}$, $i\in\N_0$, so that $(\sigma
_0, \sigma_h)=(\prod_{i=0}^\infty\hat X_i^{\hat\alpha_i}, \prod
_{i=0}^\infty\hat X_i^{\hat\beta_i})$.
The random variables $\hat X_i$ are regularly varying with index $-1$,
and the coefficients $\hat\alpha_i$ and $\hat\beta_i$ satisfy the
conditions of Theorem~\ref{power-products-infinite} (where we start
from the index $i=0$ instead of $i=1$). Moreover, by Remark~\ref
{remconvconds}(ii) and the assumption that $E(\eta_0^2)<\infty$, it
follows that $\sigma_0, \sigma_h>0$ almost surely.
The statement about $(\sigma_0, \sigma_h)$ thus follows by an
application of Theorem~\ref{power-products-infinite}, cases (ii) and
(iii), in combination with Remark~\ref{remHRV}(i)--(iii).

In particular, $(\sigma_0,\sigma_h)$ is regularly varying on $\mathbb
{E}^2$ with index $-\sum_{i=1}^\infty\kappa_i$. Hence, $\eps_0$
fulfills the moment condition of Corollary~\ref{corSVcor}, which
yields the assertion on $(X_0, X_h)$.
\end{pf*}

\begin{pf*}{Proof of Corollary \ref{limitmeasuressvmodelsmonotone}}
The stated unique form of the solution is immediate from the assumed
strict monotonicity of the coefficients. The form of the limit measure
then follows from Theorem~\ref{limitmeasuressvmodels}(i) with $i=0,
j=h$ and $\alpha_0=1$.
\end{pf*}

\begin{pf*}{Proof of Theorem \ref{representeta}}
Let $\alpha_{2m(i-1)}=1$ and $\alpha_{2m(i-1)+i}=2-\eta_i^{-1} \in
[0,1]$ for $1\leq i \leq m$ and $\alpha_j=0$ else in Definition~\ref
{defSVnew}. This choice guarantees that for each $1 \leq h \leq m$
with $\eta_h>1/2$ there exists exactly one $k(h) \in\mathbb{N}_0$
such that both $\alpha_{k(h)}$ and $\alpha_{k(h)+h}$ are positive;
furthermore, for this $k(h)$ one has $\alpha_{k(h)}=1$ and $\alpha
_{k(h)+h}=2-\eta_h^{-1}$.

Fix $1\leq h \leq m$. If $\eta_h=1/2$, then there exists no $i$ such
that both $\alpha_i$ and $\alpha_{i+h}$ are positive and thus $\sigma
_0$ and $\sigma_h$ are independent and $(\sigma_0,\sigma_h)$ has a
coefficient of tail dependence equal to $1/2$.

If $\eta_h=1$, then $\alpha_{k}=\alpha_{k+h}=1$ for exactly one
$k=k(h) \in\mathbb{N}_0$ and thus $\kappa_{k(h)+h}=1$, $\kappa_j=0, j
\neq k(h)+h$, is the unique solution to the optimization problem (\ref
{svmin}). By Theorem~\ref{limitmeasuressvmodels}(ii), the coefficient
of tail dependence of $(\sigma_0,\sigma_h)$ thus equals $1$.

Finally, if $\eta_h \in(1/2,1)$, write
%
\begin{equation}
\label{foruniquesolution}\sigma_0=\prod_{i \in
\mathbb{Z}\dvtx \alpha_i >0} {
\rm{e}}^{\alpha_i\eta_{-i}}, \qquad\sigma_h=Z_h \cdot\prod
_{i\in\mathbb{Z}\dvtx \alpha_{i+h} \in(0,1)} {\rm{e}}^{\alpha_{i+h}\eta_{-i}}
\end{equation}
with\vspace*{1pt} $\alpha_j:=0$ for $j<0$, and $Z_h:=\prod_{i \in\mathbb
{Z}\dvtx \alpha_{i+h}=1} {\rm{e}}^{\eta_{-i}}$, which is independent of
all other factors on the right-hand sides of (\ref
{foruniquesolution}), as $\alpha_{i+h}=1$ implies $\alpha_i=0$.
Moreover, according to the corollary to Theorem 3 of Embrechts and
Goldie \cite{EmGo80}, $Z_h$ is regularly varying with index $-1$.
Now, the joint behavior of $\sigma_0$ and $\sigma_h$ can be derived
by applying Theorem~\ref{power-products-infinite} to the
representation~(\ref{foruniquesolution}).
Observe that the unique optimal solution to the corresponding
optimization problem to minimize $\tilde\kappa+\sum_{i\in\Z}
\kappa_i$ under the constraints $\sum_{i\in\Z} \kappa_i\alpha
_i\ge1, \tilde\kappa+\sum_{i\in\Z\dvtx  \alpha_{i+h}\in(0,1)} \kappa
_i\alpha_{i+h}\ge1$ and $\tilde{\kappa} \geq0, \kappa_i \geq0, i
\in\mathbb{Z}$, is given by $\kappa_{k(h)}=1$, $\tilde\kappa
=1-\alpha_{k(h)+h}=\eta_h^{-1}-1$ and $\kappa_j=0$ for all other
$j\in\Z$, because $\kappa_{k(h)}$ is the only value which
contributes to both sums of the constraints and, at the same time, it
is multiplied with the largest coefficient in the first sum. Therefore,
according to Theorem~\ref{power-products-infinite},
\[
P(\sigma_0>x, \sigma_h>x) \sim cP\bigl({
\rm{e}}^{\eta
_{-k(h)}}>x\bigr)P\bigl(Z_h>x^{\eta_h^{-1}-1}\bigr)
\]
for some constant $c>0$, and the coefficient of tail dependence of
$(\sigma_0, \sigma_h)$ equals $1/(\kappa_{k(h)}+\tilde\kappa)=\eta_h$.

By Corollary~\ref{corSVcor} and the following note, the same holds
true for $(X_0,X_h)$ (and $(\llvert X_0\rrvert, \llvert X_h\rrvert
)$) for all $\eta_h \in
[1/2,1]$ if $E(\llvert \eps_0\rrvert ^{2+\delta})<\infty$ for some
$\delta>0$.
\end{pf*}

\textbf{\textit{Details for Remark}~\ref{asymptoticdependence}(ii).} If $\eta_h=1$ for some $h>0$, then any
optimal solution to (\ref{svmin}) has to satisfy $\sum_{i=0}^\infty
\kappa_i=1$. (This even holds in the case that the solution is not
unique since otherwise Theorem~\ref{power-products-infinite}(i) leads
to a contradiction to $\eta_h=1$.) But then $\kappa_i>0$ can only
hold if $\alpha_{i-h}=\alpha_i=1$. Write
\[
\sigma_0=Z \prod_{i\geq h\dvtx  \min\{\alpha_i, \alpha_{i-h}\}<1}
\mathrm{e}^{\alpha_{i-h}\eta_{h-i} }, \qquad\sigma_h=Z \prod
_{i
\in\mathbb{N}_0\dvtx  \min\{\alpha_i, \alpha_{i-h}\}<1} \mathrm{e}^{\alpha
_i\eta_{h-i} }
\]
with\vspace*{1pt} $Z:=\prod_{i \in\mathbb{N}_0\dvtx  \alpha_i=\alpha_{i-h}=1}\mathrm
{e}^{\eta_{h-i}}$. Again, $Z$ is regularly varying with index $-1$ by
Embrechts and Goldie \cite{EmGo80}. Then the corresponding linear program
\[
\tilde{\kappa}+\sum_{i\in\mathbb{N}_0\dvtx  \min\{\alpha_i, \alpha
_{i-h}\}<1}\kappa_i \to
\min!
\]
under the constraints
\begin{eqnarray}
\tilde{\kappa}+\sum_{i\geq h\dvtx  \min\{\alpha_i, \alpha_{i-h}\}
<1}\alpha_{i-h}
\kappa_i\geq1,\qquad \tilde{\kappa}+\sum_{i\in
\mathbb{N}_0\dvtx  \min\{\alpha_i, \alpha_{i-h}\}<1}
\alpha_i\kappa_i\geq1,\nonumber
\\
\eqntext{\tilde{\kappa}\geq0, \kappa_i \geq0, \forall i \in
\mathbb{N}_0,} %
\end{eqnarray}
has the unique solution $\tilde{\kappa}=1, \kappa_i=0$ for all $ i
\in\mathbb{N}_0$, and thus $P(\sigma_0>x, \sigma_h>x) \sim cP(Z>x)$
for some constant $c>0$ by Theorem~\ref{power-products-infinite}. By
Lemma 7.2 of Rootz{\'e}n \cite{Ro86}
\[
P(Z>x)= P \biggl(\sum_{i \in\mathbb{N}_0\dvtx  \alpha_i=\alpha
_{i+h}=1}\eta_{-i}>
\log x \biggr) \sim\hat{K} (\log x)^\beta x^{-1},
\]
for a constant $\hat{K}>0$, where we have used equation (7.8) in
Rootz{\'e}n \cite{Ro86} and the fact that $\beta<-1$. On
the other hand,
\[
P(\sigma_0>x)= P \Biggl(\sum_{i=0}^\infty
\alpha_i \eta_{-i}>\log x \Biggr) \sim
\hat{K}' (\log x)^\beta x^{-1},
\]
for some constant $\hat{K}'>0$ by the same arguments as above.
Combining the above asymptotics, we arrive at
\[
\lim_{x \to\infty} P(\sigma_h>x \mid\sigma_0>x)=
\lim_{x \to
\infty} \frac{P(\sigma_0>x, \sigma_h>x)}{P(\sigma_0>x)}>0 %
\]
and thus asymptotic dependence of $(\sigma_0, \sigma_h)$. The same
holds true for $(X_0, X_h)$.

In contrast, if $\beta>-1$, then one may conclude from Lemma 7.2 of
Rootz{\'e}n \cite{Ro86} that $P(\sigma_h>x \mid\sigma
_0>x)=\mathrm{O}((\log x)^{-l(\beta+1)})\to0$ as $x\to\infty$ with $l:=\sum
_{i=0}^\infty\mathds{1}_{\{\alpha_i=1\}}- \sum_{i=0}^\infty\mathds
{1}_{\{\alpha_i=\alpha_{i+h}=1\}}>0$, which confirms the asymptotic
independence of $\sigma_0$ and $\sigma_h$ in this case.


\section*{Acknowledgments}
The authors would like to thank the Associate
Editor and two anonymous referees for their careful reading and many
helpful comments. This project was supported by the German Research
Foundation DFG, Grant no JA 2160/1.


%

\printhistory
\end{document}